\documentclass[reqno]{amsart}
\usepackage{amsaddr}
\usepackage{hyperref}

\title{On a Repulsion-Diffusion Equation with Immigration}
\author{Peter Koepernik\textsuperscript{1,2}}
\thanks{\textsuperscript{2}The author has benefitted while working on this article from an EPSRC grant EP/W523781/1.}
\address{\textsuperscript{1}Department of Statistics, University of Oxford, 24–29 St Giles’, Oxford, OX1 3LB, United Kingdom}
\email{peter.koepernik@stats.ox.ac.uk}
\date{17 February 2023}
\keywords{Repulsion-diffusion, immigration, asymptotics, maximum principle, population dynamics, superBrownian motion}
\subjclass[2020]{35B40, 35B50, 35Q92, 60J68}

\usepackage{amsmath}  
\usepackage{IEEEtrantools}
\usepackage{amssymb}  
\usepackage{mathtools} 
\usepackage{dsfont}
\usepackage{hyperref} 
\usepackage[capitalise]{cleveref}

\usepackage{tikz}

\addtolength{\textwidth}{.5in}
\calclayout


\usepackage{enumitem}
\setenumerate[0]{label=(\roman*)}
\setlist{topsep=8pt,itemsep=4pt,partopsep=4pt, parsep=4pt}

\crefname{equation}{}{}
\Crefname{equation}{Eq.}{Eqs.}

\theoremstyle{plain}
\newtheorem{theorem}{Theorem}[section]
\newtheorem{lemma}[theorem]{Lemma}
\newtheorem{proposition}[theorem]{Proposition}
\newtheorem{corollary}[theorem]{Corollary}

\crefname{lemdef}{Lemma}{Lemmas}
\Crefname{lemdef}{Lemma}{Lemmas}

\theoremstyle{remark}
\newtheorem{remark}[theorem]{Remark}

\theoremstyle{definition}
\newtheorem{assumption}{Assumption}[section]

\crefname{assumption}{}{}
\numberwithin{equation}{section}

\newcommand{\R}{\mathbb{R}} 
\newcommand{\N}{\mathbb{N}} 

\newcommand*\diff{\mathop{}\!\mathrm{d}}
\newcommand*\dd{\diff}

\newcommand{\ind}{\mathds{1}}

\newcommand{\e}{\mathrm{e}}

\newcommand{\nnorm}[1]{{\left\vert\kern-0.25ex\left\vert\kern-0.25ex\left\vert #1
    \right\vert\kern-0.25ex\right\vert\kern-0.25ex\right\vert}}

\begin{document}

\begin{abstract}
    We study a repulsion-diffusion equation with immigration and linear diffusion, whose asymptotic behaviour is related to stability of long-term dynamics in spatial population models and other branching particle systems. We prove well-posedness and find sharp conditions on the repulsion under which a form of the maximum principle and a strong notion of global boundedness of solutions hold. The critical asymptotic strength of the repulsion is $|x|^{1-d}$, that of the Newtonian potential.
\end{abstract}

\maketitle
\section{Introduction}

We consider the following partial differential equation,
\begin{equation}\label{eq:classicalpde}
        \partial_t \rho = \frac{1}{2}\Delta \rho + \nabla\cdot  (\rho \nabla W\star \rho) + f,
\end{equation}
on $\R^d$ for any dimension $d\in \N$, started from a non-negative, bounded and integrable initial condition, where
\begin{enumerate}
    \item $f$ is the \emph{immigration}, which is bounded, integrable, and non-negative,
    \item $W \in C^2(\R^d\setminus \left\{ 0 \right\} )$ is the \emph{interaction potential}, which we assume to be eventually decreasing.
\end{enumerate}
Our main arguments work in the case where $f$ is time-dependent with sufficient regularity, see \cref{rem:timedependentf}.

\subsection{Motivation}

One way to interpret \cref{eq:classicalpde} is as the hydrodynamic limit of a branching particle system (BPS) in which particles appear in $\R^d$ at a constant rate distributed according to $f$, move along the paths of independent Brownian motions, and at constant rate branch into a random number of offspring with mean one. Additionally, a particle at $x$ experiences a drift $-\nabla W(x-y)$ if there is another particle at $y$. Since $W$ is eventually decreasing, this interaction will be repulsive at long range. Then the solution of \cref{eq:classicalpde} describes the population density of this system in the mean-field limit of a large number of individuals. See \cite{meanfield1,meanfield2} for mean-field arguments in a similar setting.

BPS with immigration (with or without repulsion or other interactions) appear in many contexts from biology and physics. They can be used to model air showers of particles produced by extraterrestrial cosmic rays entering the atmosphere \cite{airshower}, families of neutrons in subcritical nuclear reactors, which sustain the reaction with a constant stream of neutrons from an outside source \cite{nuclear}, or biological populations \cite{dawsonsuperprocess,dynkin1991branching,li1992measure}. Immigration in the biological context could arise from different sources. One might consider a steady flow of individuals from a large, stable population migrating into a new, uncontested habitat.
Secondly, a BPS as considered here goes extinct in finite time almost-surely, but conditioned on survival it looks exactly like a BPS with a certain kind of immigration \cite{evansimmortal}. We go into more detail on this particular example in \cref{app:superprocesses}.

In many of these examples, it is of interest whether the system has stable long-term dynamics, which in the mean-field limit is reflected in the asymptotic behaviour of $\rho_t$. If we consider first the case where there is no repulsion, then the equation reduces to $\partial_t \rho = \frac{1}{2}\Delta \rho + f$, and the solution (started from zero) is given by \[
\rho_t = \int_0^t G_s \star f \dd s,
\] where $G_s(x) = (2\pi s)^{-d / 2} \e^{-x^2 / (2s)},\, s>0,$ is the heat kernel. Provided that $f\not\equiv 0$:
\begin{enumerate}
    \item If $d \le 2$, then $\rho_t \uparrow \infty$ locally uniformly.
    \item If $d \ge 3$, then $\rho_t$ converges to a bounded stationary distribution. 
\end{enumerate}
This is proved in \cref{lem:whenWis0}. Indeed it is a known fact that critical branching processes in one and two dimensions tend to be unstable in the sense that, after a long time, they have either gone extinct, or they have a lot of mass that concentrates in large ``clumps'' \cite{felsenstein,kallenbergtree}. We elaborate on this phenomenon in \cref{app:superprocesses}. One of the reasons this does not occur in the real world is that individuals tend to migrate away from overcrowded areas, which can be modelled by a pairwise repulsion between individuals. This motivates the question whether solutions to \cref{eq:classicalpde} remain asymptotically bounded in dimensions $d\le 2$ if the repulsion is sufficiently strong.

\subsection{Related Work}

\subsubsection*{SuperBrownian Motion with Immigration}
A lot of work has been done on superBrownian motion with immigration (and no interaction), which is a measure-valued stochastic process that formally solves \cref{eq:classicalpde} without the interaction and an additional term $\sqrt{\rho_t} \diff W$ for a space-time white noise $W$. It can be obtained from the same BPS whose hydrodynamic limit is given by \cref{eq:classicalpde}, except that particles don't interact and the branching rate is scaled up simultaneously with the particle density. We go into more detail on superBrownian motion in \cref{app:superprocesses}. Amongst the known results for this process are central limit theorems \cite{clt} and large deviation principles for large \cite{ldp1,ldp2} and small times \cite{ldpsmall}. Further results on this and more general measure-valued diffusions with immigration can be found in \cite{immigrationstructures} and references therein.

\subsubsection*{Branching Brownian Motion with Interaction}
Some work has also been done on branching Brownian motion (BBM) with interaction. Classical BBM, without interaction, is a system of particles that move as independent Brownian motions, and branch at constant rates into exactly two offspring. The number of particles grows exponentially and there is no chance of extinction, which makes it quite different from the BPS with critical branching considered here. Questions of interest in this setting are often of extremal type, such as the structure of the process close to the furthest particle from the origin \cite{bbmextremal2,bbmextremal1}, or large time limits of the population's empirical measure if scaled appropriately \cite{englander1,englander2}. Some authors have studied BBM with repulsive or attractive interactions. Engl\"ander considered the case where particles have an Ornstein-Uhlenbeck-type attraction or repulsion (that is, $W(x) = b x^2$ for $b\in \R$) to or from their common centre of mass \cite{englander1}, and from each other \cite{englander2}. Similar results have been obtained in the context of supercritical superBrownian motion \cite{gillOU}. This particular interaction often allows for explicit calculations because $\nabla W(x) \propto x$ is linear. Note also that in light of our motivation, it is not the most natural choice of repulsion, since its strength grows, rather than decays with distance. In another recent paper \cite{bovierbbm}, authors study a BBM in which they introduce short-range pairwise repulsion through a change of measure that penalises the total time that particles spend within close range of each other. They show that the dominant effect of the penalisation is a drastic reduction in branching rate, and that this model is well-approximated by a simplified model in which only branching events are penalised, and there is no repulsion between individuals once they are born.

\subsubsection*{Aggregation-Diffusion Equations}
Equation \cref{eq:classicalpde} is also related to a well-studied class of non-local, nonlinear partial differential equations known as \emph{aggregation-diffusion equations},
\begin{equation}\label{eq:aggregationdiffusion}
    \partial_t \rho = \frac{1}{2}\Delta \rho + \nabla \cdot (\rho \nabla W\star\rho),
\end{equation} which differ from \cref{eq:classicalpde} in that there is no immigration, and the interaction potential is attractive at long range rather than repulsive. We refer to \cite{pdereview,horstmann} for reviews of this class of equations. Variants of \cref{eq:aggregationdiffusion} also exist with non-linear diffusion \cite{nonlinear1,nonlinear2}; when the diffusion is linear as above, \cref{eq:aggregationdiffusion} is commonly called a \emph{McKean Vlasov equation}. Aggregation-diffusion equations have attracted significant interest in the literature because they describe the large scale dynamics of a wide variety of interacting particle systems arising in biology, physics, social and other life sciences, which are often driven by long-range attraction and short range repulsion. Examples include chemotaxis, bacteria orientation, or motion of human crowds, see \cite{chemotaxis,patlak,pdereview}. Significant mathematical interest is further due to the delicate competition between aggregation and diffusion, which leads to a dichotomy between well-posedness and finite time blowup \cite{blanchet,wellposed1,wellposed2,wellposed3,wellposed4,wellposed5,herrero,blowup1,blowup2}. For sufficiently weak interaction, the diffusion dominates and the solution asymptotically simplifies to the solution of the heat equation \cite{joseheatkernel}, while a balance between diffusion and aggregation can lead to the existence of non-trivial steady states \cite{stationary1,stationary2,stationary3,stationary4,stationary5}.

Even though \cref{eq:classicalpde} looks similar to \cref{eq:aggregationdiffusion}, its behaviour is markedly different. Where the competition between aggregation and diffusion decides the behaviour of \cref{eq:aggregationdiffusion}, that of \cref{eq:classicalpde} is decided by the competition between the immigration against the diffusion \emph{and} the repulsion, which both work to spread the immigrated mass.

\subsection{Summary of Results}

We first establish well-posedness of \cref{eq:classicalpde} under mild regularity assumptions on $f$ and $W$. Then we find sharp conditions on $W$ under which the following global boundedness property holds:
\begin{equation}\label{eq:cheapMprop}
    \forall \rho_0\colon \left\|\rho_0\right\|_\infty \le M \implies \sup_{t\ge 0} \left\|\rho_t\right\|_\infty \le M,
\end{equation}
for an explicit value $M > 0$ which is also sharp. Here $\left\|\cdot \right\|_\infty = \left\|\cdot \right\|_{L^\infty}$ denotes the supremum norm. Under the same (sharp) conditions on $W$, and for the same value $M$, a form of the maximum principle holds:
\begin{equation}\label{eq:cheapMP}
     \forall \rho_0\colon \left\|\rho_0\right\|_\infty \ge M \implies \max_{t\ge 0} \left\|\rho_t\right\|_\infty = \left\|\rho_0\right\|_{\infty}.
\end{equation}
In particular, \cref{eq:cheapMprop,eq:cheapMP} give sufficient conditions on $W$ under which $\sup_{t\ge 0} \left\|\rho_t\right\|_\infty < \infty$ for any bounded initial condition.
Both results follow from a differential inequality of the form \[
    \partial_t^+ \left\|\rho_t\right\|_\infty \le \left\|f\right\|_\infty - c \left\|\rho_t\right\|_\infty^2,
\] for a sharp value $c = c_W \in \R$, which implies \cref{eq:cheapMprop,eq:cheapMP} if $c_W > 0$. See \cref{thm:MP,thm:Mproperty,thm:partial+} for precise statements. It will turn out that the critical strength of the repulsion for $c_W > 0$ to hold is $|\nabla W(x)| \sim |x|^{1-d}$, and $W$ must be singular at the origin. A natural example that satisfies both assumptions is the \emph{Newtonian potential},
\begin{equation}\label{eq:newtonian}
    W_N(x) = c_d^{-1} \begin{cases}
        \frac{1}{d-2}|x|^{2-d} ,& d \neq  2,\\
        -\log |x| ,& d = 2,
    \end{cases}
\end{equation}
where $c_d$ is the surface area of the unit ball in $\R^d$. The Newtonian potential is the Green's function of the Laplace equation (that is, $\Delta W_N = -\delta_0$ in a distributional sense), and has the physical interpretation of the electrodynamical repulsive potential in $\R^d$.

\subsection{Outline}
In the following section we will give precise statements of our results, and \cref{sec:proofs} contains the proofs, followed by a short outlook in \cref{sec:outlook}. In \cref{app:superprocesses} we elaborate on the connection between \cref{eq:classicalpde} and stability of long-term dynamics in spatial population models, and in \cref{app:sobolev} we give a brief definition of fractional Sobolev spaces, and recall and proof some basic facts about them.

\section{Results}\label{sec:results}

Our results require the following regularity assumptions. By $\mathcal{W}^{\gamma,p}$ we denote the usual Sobolev spaces on $\R^d$, where $p\in [1,\infty]$, and $\gamma\ge 0$ may not be an integer. If $\gamma = 0$ then $\mathcal{W}^{\gamma,p} = L^p$ is the usual $L^p$ space. See \cref{app:sobolev} for a brief and \cite{hitchhiker,sobolev2,sobolev3} for a comprehensive introduction to fractional Sobolev spaces.  We further write $g_+ = g \vee 0$ and $g_- = (-g) \vee 0$ for the positive and negative part of a function $g\colon \R^d \to \R$, respectively.

\begin{assumption}
    \label{ass}
    \textup{(i)} There is $\gamma_f \in (0,1)$ such that $f \in \mathcal{W}^{\gamma_f,1} \cap \mathcal{W}^{\gamma_f,\infty}$.
    \begin{enumerate}
        \setcounter{enumi}{1}
        \item $\nabla W$ and $\Delta W$ are bounded on $\R^d\setminus B(0,r)$ for all $r > 0$, and locally integrable.
        \item At least one of $(\Delta W)_-$ and $(\Delta W)_+$ is integrable.
    \end{enumerate}
\end{assumption}

\cref{ass}(i) is required to ensure that the mild solution is also a classical solution. A simple sufficient criterion is that $f$ is compactly supported and Hölder continuous (with any positive exponent). \cref{ass}(iii) ensures that $\int \Delta W \coloneqq \int_{\R^d} \Delta W(x) \diff x\in [-\infty,\infty]$ exists. As we will see shortly, the fact that $W$ is eventually decreasing implies $\int \Delta W < \infty$ and hence that in fact $(\Delta W)_+$ must be integrable.
Here and later, the integral is in a strict, classical sense; for example the Newtonian potential has $\Delta W_N \equiv 0$ outside the null set $\left\{ 0 \right\} $, so $\int \Delta W_N = 0$. The following well-posedness result only requires \cref{ass}(i), and \cref{ass}(ii) for $\nabla W$, but no assumptions on $\Delta W$.

\begin{theorem}[Well-posedness]\label{thm:wellposedness}
    For every initial condition there exists a unique mild solution to \cref{eq:classicalpde} up to a maximal existence time $T^\star \in (0,\infty]$. It is non-negative and solves \cref{eq:classicalpde} in the classical sense on $(0,T^\star)$.
\end{theorem}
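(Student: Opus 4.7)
The plan is to reformulate the PDE as a Duhamel-type fixed point equation and run a standard Banach contraction argument in the space $X_T = C([0,T],L^1\cap L^\infty)$, then extend to a maximal existence time, prove non-negativity by a maximum principle, and finally upgrade from mild to classical via heat kernel regularity.

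First I would integrate the divergence by parts onto the heat semigroup, writing the mild formulation as
\begin{equation*}
    \rho_t = G_t\star \rho_0 \;+\; \int_0^t \nabla G_{t-s} \star \bigl(\rho_s\,(\nabla W\star \rho_s)\bigr)\,\dd s \;+\; \int_0^t G_{t-s}\star f\,\dd s,
\end{equation*}
which only requires $\nabla G_t$ rather than second derivatives. The two estimates that drive everything are (a) $\|\nabla G_t\|_1 \le C t^{-1/2}$, whose singularity is integrable near $s=t$, and (b) the bound
\begin{equation*}
    \|\nabla W \star \rho\|_\infty \le \|\nabla W\,\ind_{B(0,1)}\|_1\,\|\rho\|_\infty + \|\nabla W\,\ind_{B(0,1)^c}\|_\infty\,\|\rho\|_1,
\end{equation*}
which is finite by \cref{ass}(ii) and controls the drift in terms of both $\|\rho\|_1$ and $\|\rho\|_\infty$. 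Combined with $\|G_t\star g\|_p \le \|g\|_p$ (Young) and $\|\rho\,(\nabla W\star\rho)\|_p \le \|\rho\|_p \|\nabla W\star\rho\|_\infty$, this lets me close a contraction in $X_T$ for sufficiently small $T$ depending only on $\|\rho_0\|_{L^1\cap L^\infty}$ and $\|f\|_{L^1\cap L^\infty}$. Uniqueness and the Picard iteration give the mild solution on $[0,T]$.

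Next I would extend the solution by the standard continuation argument: iterate the local existence step, which yields a maximal existence time $T^\star\in(0,\infty]$ with the blow-up alternative $\|\rho_t\|_1 + \|\rho_t\|_\infty \to \infty$ as $t\uparrow T^\star$ if $T^\star <\infty$. For non-negativity, I would freeze the drift $b_t \coloneqq \nabla W\star \rho_t$ and view the equation as the linear inhomogeneous Fokker-Planck equation $\partial_t \rho = \tfrac12\Delta\rho + \nabla\cdot(\rho b) + f$ with a bounded, locally Lipschitz drift (by \cref{ass}(ii)). Since $\rho_0\ge 0$ and $f\ge 0$, testing against a smooth approximation of $\ind_{\{\rho<0\}}$ (equivalently, applying the parabolic maximum principle to $\rho_-$) shows $\rho_t \ge 0$.

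For the upgrade to a classical solution I would use that, for $t>0$, convolution with $G_{t-s}$ is smoothing: differentiating the mild formula under the integral, the term $\int_0^t \nabla G_{t-s}\star(\rho_s(\nabla W\star\rho_s))\,\dd s$ is twice differentiable in $x$ and once in $t$ on $(0,T^\star)$, and the regularity $f\in\mathcal{W}^{\gamma_f,\infty}$ in \cref{ass}(i) is exactly what gives enough Hölder regularity for $\int_0^t G_{t-s}\star f\,\dd s$ to have a continuous Laplacian (via the standard heat-kernel Schauder estimate for an inhomogeneous source in a fractional Sobolev class). Verifying that all derivatives identified in this way indeed satisfy \cref{eq:classicalpde} pointwise is then a direct calculation.

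The main obstacle is the nonlinear drift term: the singularity of $\nabla W$ at the origin together with its possible lack of global integrability means no single $L^p$ bound on $\rho$ suffices to control $\nabla W\star\rho$, and one must genuinely propagate both $L^1$ and $L^\infty$ bounds simultaneously. \cref{ass}(ii) is tailored precisely to the splitting argument above, and this is where the local existence proof must be set up carefully to avoid circularity in the contraction estimate.
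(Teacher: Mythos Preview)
Your proposal is correct and follows essentially the same route as the paper: the Duhamel reformulation, the splitting $\nabla W = \nabla W\,\ind_{B(0,1)} + \nabla W\,\ind_{B(0,1)^c}$ to control the drift by $\|\rho\|_1+\|\rho\|_\infty$, the contraction in $C([0,T],L^1\cap L^\infty)$ using $\|\nabla G_t\|_1\le Ct^{-1/2}$, the continuation/blow-up alternative, the non-negativity via testing against an approximation of the negative part, and the mild-to-classical upgrade via heat-kernel smoothing with the $\mathcal{W}^{\gamma_f,\infty}$ regularity of $f$ are exactly the ingredients the paper uses. The only place where the paper is more explicit is the regularity step, which is carried out as an iterated bootstrap in fractional Sobolev spaces (gaining $\gamma_f$ at each step up to $\gamma<2+\gamma_f$) rather than a single differentiation under the integral; you should be aware that this iteration is needed to give $\rho_s(\nabla W\star\rho_s)$ enough a priori regularity near $s=t$ for the classical-solution verification to go through.
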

In this statement and the remainder of \cref{sec:results}, we only consider initial conditions that are non-negative, bounded, and integrable, without mentioning this explicitly. Precise statements are in \cref{thm:wellposed,thm:regularity} below.  We note that it is not possible to prove $T^\star = \infty$ under the current assumptions, because the short-range behaviour of $W$ could lead to finite-time blowup. 

The main result of this work is the identification of an index $c_W \in \R$ that plays a critical role in the behaviour of solutions to \cref{eq:classicalpde}. We first state our main results in terms of $c_W$, and discuss its definition and properties afterwards.

\begin{proposition}\label{prop:globalexistence}
    If $c_W > 0$, then $T^\star = \infty$ for every initial condition. 
\end{proposition}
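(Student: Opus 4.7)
The plan is to turn the differential inequality announced in \cref{thm:partial+},
\[
\partial_t^+ \|\rho_t\|_\infty \le \|f\|_\infty - c_W \|\rho_t\|_\infty^2,
\]
into an a priori uniform $L^\infty$ bound on $[0,T^\star)$, and then invoke a blow-up alternative built into the local existence theory of \cref{thm:wellposedness} to rule out $T^\star<\infty$.

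First I would observe that when $c_W>0$ the right-hand side is non-positive as soon as $\|\rho_t\|_\infty\ge u_\star\coloneqq\sqrt{\|f\|_\infty/c_W}$. Comparing the upper Dini derivative $\partial_t^+\|\rho_t\|_\infty$ with the scalar ODE $\dot u=\|f\|_\infty-c_W u^2$, which has $u_\star$ as globally attractive positive equilibrium, I would conclude
\[
\|\rho_t\|_\infty\le M_0\coloneqq\max\!\bigl(\|\rho_0\|_\infty,\,u_\star\bigr)\qquad\text{for all }t\in[0,T^\star).
\]
The only point to be careful about is passing from the Dini-derivative inequality to the pointwise bound: one needs $t\mapsto\|\rho_t\|_\infty$ to be continuous on $[0,T^\star)$, which should follow from the classical regularity asserted in \cref{thm:wellposedness}, and then a standard Dini-comparison lemma applies.

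Next I would use the blow-up alternative that typically comes for free from the mild-solution fixed point argument underlying \cref{thm:wellposedness}: if $T^\star<\infty$, then necessarily $\limsup_{t\uparrow T^\star}\|\rho_t\|_\infty=\infty$. Together with the a priori bound $\|\rho_t\|_\infty\le M_0$ this gives a contradiction, hence $T^\star=\infty$.

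The main obstacle, and really the only non-formal step, is establishing the blow-up alternative in the precise form needed here: one must verify that the length of the existence interval produced by the contraction mapping in the proof of \cref{thm:wellposedness} depends on the initial datum only through $\|\rho_0\|_\infty$ (and on the fixed data $f,W$), so that an $L^\infty$ a priori bound really does allow one to restart the local solution indefinitely. This is a bookkeeping exercise with the contraction estimates rather than a new idea; everything beyond it is soft ODE comparison.
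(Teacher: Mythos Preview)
Your proposal is correct and matches the paper's argument: the differential inequality of \cref{thm:partial+} is turned via ODE comparison into a uniform $L^\infty$ bound on $[0,T^\star)$, which then contradicts the blow-up alternative from the local theory (stated explicitly in \cref{thm:wellposed}). One minor refinement of your last paragraph: the fixed-point argument runs in $L^1\cap L^\infty$, so the local existence time depends on $\|\rho_0\|_{L^1}+\|\rho_0\|_\infty$ rather than on $\|\rho_0\|_\infty$ alone; the paper closes this gap by showing separately (\cref{lem:L1norm}) that $\|\rho_t\|_{L^1}=\|\rho_0\|_{L^1}+t\|f\|_{L^1}$ grows only linearly, after which the blow-up criterion reduces to the $L^\infty$ statement you invoke.
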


\begin{theorem}[Maximum Principle]\label{thm:MP}
    If $c_W > 0$ and $M = \sqrt{\left\|f\right\|_\infty / c_W} $, then
    \begin{equation}\label{eq:MP}
    \left\|\rho_0\right\|_\infty \ge M \implies \max_{0\le t < T^\star} \left\|\rho_t\right\|_\infty = \left\|\rho_0\right\|_{\infty}
    \end{equation}
    for every initial condition $\rho_0$. If $M < \sqrt{\left\|f\right\|_\infty / c_W} $, or $c_W < 0$ and $M > 0$ arbitrary, then there exists an initial condition $\rho_0$ for which \cref{eq:MP} is false. 
\end{theorem}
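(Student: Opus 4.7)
The plan is to deduce both halves of the statement from the differential inequality of \cref{thm:partial+},
\[
\partial_t^+ \|\rho_t\|_\infty \le \|f\|_\infty - c_W \|\rho_t\|_\infty^2,
\]
combined with the scalar ODE $\dot u = \|f\|_\infty - c_W u^2$, whose unique positive equilibrium (when $c_W > 0$) is exactly $M = \sqrt{\|f\|_\infty / c_W}$.

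For the positive direction, I would set $\phi(t) = \|\rho_t\|_\infty$ and invoke a standard comparison lemma for upper Dini derivatives to obtain $\phi(t) \le \psi(t)$ on $[0, T^\star)$, where $\psi$ solves $\dot\psi = \|f\|_\infty - c_W \psi^2$ with $\psi(0) = \phi(0) \ge M$. Since the right-hand side is non-positive on $[M, \infty)$ and $M$ is a fixed point of this ODE, $\psi$ is non-increasing, so $\phi(t) \le \psi(0) = \|\rho_0\|_\infty$; together with attainment at $t = 0$ this is exactly \cref{eq:MP}.

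For sharpness in either target regime ($c_W > 0$ with $M < \sqrt{\|f\|_\infty / c_W}$, or $c_W < 0$ with $M > 0$ arbitrary), the same ODE satisfies $\dot u > 0$ at $u = M$, so a hypothetical spatially constant solution would already violate the maximum principle. To realize this within admissible initial data, I fix a point $x_0$ at which $f(x_0)$ is so close to $\|f\|_\infty$ that $f(x_0) > c_W M^2$, and take $\rho_0^{(n)}$ to be a smooth, radially symmetric plateau equal to $M$ throughout $B(x_0, n)$ and tapering to zero outside $B(x_0, 2n)$. Because $\nabla \rho_0^{(n)}(x_0) = \Delta \rho_0^{(n)}(x_0) = 0$ and the radial symmetry forces $(\nabla W \star \rho_0^{(n)})(x_0) = 0$, one obtains
\[
\partial_t \rho_t^{(n)}(x_0)\big|_{t=0} = f(x_0) + M\bigl(\Delta W \star \rho_0^{(n)}\bigr)(x_0),
\]
which should tend to $f(x_0) - c_W M^2 > 0$ as $n \to \infty$. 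Continuity in time (from \cref{thm:wellposedness}) then produces some $t > 0$ with $\|\rho_t^{(n)}\|_\infty \ge \rho_t^{(n)}(x_0) > M = \|\rho_0^{(n)}\|_\infty$, falsifying \cref{eq:MP}.

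The main obstacle is the rigorous justification of the limit $M\bigl(\Delta W \star \rho_0^{(n)}\bigr)(x_0) \to -c_W M^2$. At the critical decay $|\nabla W(x)| \sim |x|^{1-d}$, $\Delta W$ is borderline non-integrable near the origin, so the convolution must be interpreted through the divergence form $\nabla\cdot(\rho\nabla W\star\rho)$ rather than pointwise, and one cannot simply replace $\rho_0^{(n)}$ by the constant $M$ under the integral. My plan is to reuse the decomposition of the interaction term that produces $c_W$ in the proof of \cref{thm:partial+}: the plateau structure of $\rho_0^{(n)}$ ensures that decomposition applies at $x_0$ and yields exactly $-c_W M^2$ in the local part, while the far-field remainder vanishes as $n \to \infty$ by dominated convergence using the integrability afforded by \cref{ass}(iii).
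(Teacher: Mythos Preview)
Your positive direction is correct and matches the paper: the differential inequality from \cref{thm:partial+} plus comparison with the scalar ODE $\dot\psi = \|f\|_\infty - c_W\psi^2$ gives $\|\rho_t\|_\infty \le \psi(t) \le \psi(0) = \|\rho_0\|_\infty$ whenever $\psi(0)\ge M$, which is exactly \cref{eq:MP}. The paper packages this as \cref{cor:partial+}.

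The sharpness argument, however, has a genuine gap. With a plateau $\rho_0^{(n)}\equiv M$ on $B(x_0,n)$, the decomposition of \cref{lem:etaW} gives
\[
\nabla\cdot(\nabla W\star\rho_0^{(n)})(x_0)
= (\Delta W)\star\rho_0^{(n)}(x_0) - \eta_W M
\longrightarrow M\!\int\Delta W - \eta_W M
= -\alpha_W M,
\]
so that $\partial_t\rho_t^{(n)}(x_0)\big|_{t=0}\to f(x_0)-\alpha_W M^2$, not $f(x_0)-c_W M^2$. Since $c_W = \alpha_W - \int(\Delta W)_-\le \alpha_W$ (with strict inequality whenever $\Delta W$ changes sign, and $\alpha_W = +\infty$ if $(\Delta W)_-\notin L^1$), your limit can be strictly smaller than the quantity you need positive, or even $-\infty$. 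In particular, for $M$ just below $\sqrt{\|f\|_\infty/c_W}$ but above $\sqrt{\|f\|_\infty/\alpha_W}$, or for $c_W<0$ with $\alpha_W>0$ and $M$ large, the plateau construction fails outright.

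The point you are missing is that the inequality used in \cref{thm:partial+}, namely $(\Delta W\star g)(x_0)\le \|g\|_\infty\int(\Delta W)_+$, is \emph{not} saturated by a constant function; it is saturated by a function that equals $\|g\|_\infty$ where $\Delta W(x_0-\cdot)>0$ and vanishes where $\Delta W(x_0-\cdot)<0$. The paper's sharpness argument does exactly this: it invokes the second half of \cref{lem:divbound}, whose proof builds $g_\varepsilon$ as (a smoothed indicator of) $\{y:\Delta W(x_0-y)>0\}$ together with a small bump at $x_0$ to force a maximum there with $\Delta g_\varepsilon(x_0)=0$. This yields $(\Delta W\star g_\varepsilon)(x_0)\to\int(\Delta W)_+$ and hence the sharp constant $c_W$, which is then fed directly into the sharpness clause of \cref{thm:partial+}. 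Your plateau can be rescued only in the special case $\Delta W\ge 0$ a.e., where $\alpha_W=c_W$.
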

\begin{theorem}[Global Boundedness]\label{thm:Mproperty}
    If $c_W > 0$ and $M = \sqrt{\left\|f\right\|_\infty / c_W} $, then
    \begin{equation}\label{eq:Mproperty}
         \left\|\rho_0\right\|_\infty \le M \implies \sup_{0 \le t < T^\star} \left\|\rho_t\right\|_\infty \le M
    \end{equation}
    for every initial condition $\rho_0$. If $M < \sqrt{\left\|f\right\|_\infty / c_W} $, or $c_W < 0$ and $M > 0$ arbitrary, then there exists an initial condition $\rho_0$ such that \cref{eq:Mproperty} is false. 
\end{theorem}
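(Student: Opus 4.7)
The plan is to derive both halves of the statement from the sharp differential inequality
\[
\partial_t^+ \|\rho_t\|_\infty \le \|f\|_\infty - c_W \|\rho_t\|_\infty^2
\]
announced in Theorem~\ref{thm:partial+}. Writing $u(t) = \|\rho_t\|_\infty$ and $g(s) = \|f\|_\infty - c_W s^2$, the value $M = \sqrt{\|f\|_\infty / c_W}$ is precisely the unique positive root of $g$, and when $c_W > 0$ the function $g$ is strictly negative on $(M,\infty)$. So the picture is that of a scalar Bernoulli-type ODE whose phase line has $M$ as a globally attracting equilibrium from below.

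For the forward implication I would argue by direct comparison. Well-posedness (Theorem~\ref{thm:wellposedness}) gives continuity of $u$ on $[0,T^\star)$. Suppose, for contradiction, that $u(0) \le M$ but $u(t_1) > M$ for some $t_1 \in (0, T^\star)$; by continuity there is a last crossing time $t_0 = \sup\{t \le t_1 : u(t) \le M\}$ with $u(t_0) = M$ and $u > M$ on $(t_0, t_1]$. On this interval $\partial_t^+ u \le g(u) < 0$, so integration of this one-sided derivative inequality (equivalently, standard Dini comparison against the constant solution $v \equiv M$ of $v' = g(v)$) forces $u(t_1) \le u(t_0) = M$, contradicting $u(t_1) > M$. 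This establishes \cref{eq:Mproperty}.

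For the sharpness clauses, the plan is to invoke the sharpness of $c_W$ in Theorem~\ref{thm:partial+}: for each $M' > 0$ there should exist admissible initial data $\rho_0$ with $\|\rho_0\|_\infty = M'$ for which $\partial_t^+ u(0)$ is arbitrarily close to $g(M') = \|f\|_\infty - c_W (M')^2$. When $c_W > 0$ and $M' < \sqrt{\|f\|_\infty / c_W}$, or when $c_W < 0$ and $M' > 0$ is arbitrary, this right-hand side is strictly positive, so $u$ strictly exceeds $M'$ in a right neighbourhood of $0$ and \cref{eq:Mproperty} fails for such $\rho_0$. Concretely, one would construct $\rho_0$ whose profile near its maximum point approximates the extremal configuration that saturates the differential inequality — a bump whose shape realises the worst-case balance between the repulsive drift $\nabla\cdot(\rho_0\nabla W\star\rho_0)$ and the dissipation $\tfrac{1}{2}\Delta\rho_0$ at the peak.

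The main difficulty lies on the sharpness side: the forward direction reduces to a one-line Dini comparison once Theorem~\ref{thm:partial+} is granted, whereas the sharpness construction requires exhibiting explicit near-maximisers of the differential inequality and controlling them at an arbitrary prescribed $L^\infty$ level $M'$. This depends in an essential way on the precise definition and analysis of $c_W$, which is the real technical heart of the paper and is not unpacked in this excerpt.
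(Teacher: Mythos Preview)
Your proposal is correct and follows the same overall logic as the paper: both the forward implication and the sharpness clauses are derived from \cref{thm:partial+}. The only presentational difference is in the forward direction. The paper first passes through \cref{cor:partial+}, solving the comparison ODE $G' = \|f\|_\infty - c_W G^2$ explicitly (the $\tanh$/$\coth$ bounds of \cref{eq:tanhbound}), and then reads off $G(t) \le M$ whenever $G(0) \le M$; your last-crossing Dini argument against the constant supersolution $v \equiv M$ is an equivalent and slightly more elementary route that avoids the explicit formula but does not produce the quantitative decay rate. For sharpness, the paper does exactly what you outline: it invokes the second half of \cref{thm:partial+} (whose proof, via \cref{lem:divbound}, allows the height $\|\rho_0\|_\infty$ to be prescribed freely) to manufacture $\rho_0$ with $\|\rho_0\|_\infty = M$ and $\partial_t^+\|\rho_t\|_\infty\big|_{t=0} > 0$. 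Your identification of the freedom to prescribe the height as the key technical point is precisely right.
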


Both theorems are consequences of the following result. Write $\partial_t^+ g(t) \coloneqq \varlimsup_{h \downarrow 0} \frac{g(t+h) - g(t)}{h}$ for a function $g\colon [0,T) \to [0,\infty)$ and $t \in [0,T)$.

\begin{theorem}\label{thm:partial+}
    For every initial condition and $t \in [0,T^\star)$,
    \begin{equation}\label{eq:partial+}
        \partial_t^+ \left\|\rho_t\right\|_\infty \le \left\|f\right\|_\infty - c_W \left\|\rho_t\right\|_\infty^2.
    \end{equation}
    For every $c > c_W$, there exists an initial condition such that \cref{eq:partial+} is false at time zero if $c_W$ is replaced by $c$.
\end{theorem}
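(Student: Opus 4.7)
The plan is to reduce the $L^\infty$ differential inequality to a pointwise inequality at a spatial maximum of $\rho_t$, where the PDE directly supplies the bound, and then to recognise $c_W$ as the sharp constant appearing naturally in that pointwise estimate.

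First I would set up the standard max-point argument. By \cref{thm:wellposedness} the solution is classical and non-negative, and the mild-solution representation together with the integrability of $\rho_0$ and $f$ should imply that $\rho_t(x) \to 0$ as $|x| \to \infty$ uniformly on compact time intervals. This guarantees that the supremum $\|\rho_t\|_\infty$ is attained at some point $x_t^\star \in \R^d$ and that, for $h > 0$ small, any maximizer $x_{t+h}^\star$ of $\rho_{t+h}$ stays in a fixed compact set. Since $\rho_t(x_{t+h}^\star) \le \|\rho_t\|_\infty$, we get
\begin{equation*}
    \frac{\|\rho_{t+h}\|_\infty - \|\rho_t\|_\infty}{h} \le \frac{\rho_{t+h}(x_{t+h}^\star) - \rho_t(x_{t+h}^\star)}{h},
\end{equation*}
and extracting a subsequence $x_{t+h_n}^\star \to x^\star$, which is a maximizer of $\rho_t$ by continuity, yields via joint continuity of $\partial_s \rho_s$ the inequality $\partial_t^+ \|\rho_t\|_\infty \le \partial_t \rho_t(x^\star)$.

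Next I would evaluate the PDE at $x^\star$. Expanding $\nabla \cdot (\rho_t \nabla W \star \rho_t) = \nabla \rho_t \cdot (\nabla W \star \rho_t) + \rho_t\, \nabla \cdot (\nabla W \star \rho_t)$ and using $\nabla \rho_t(x^\star) = 0$ and $\Delta \rho_t(x^\star) \le 0$ gives
\begin{equation*}
    \partial_t \rho_t(x^\star) \le \|\rho_t\|_\infty \, \nabla \cdot (\nabla W \star \rho_t)(x^\star) + \|f\|_\infty.
\end{equation*}
From its critical role in \cref{thm:MP,thm:Mproperty}, $c_W$ should be (or coincide with) the sharp constant in the pointwise estimate
\begin{equation*}
    \nabla \cdot (\nabla W \star \rho)(x^\star) \le -c_W \|\rho\|_\infty
\end{equation*}
valid for every admissible $\rho$ attaining its supremum at $x^\star$, i.e.\ $c_W$ is the infimum of $-\nabla \cdot (\nabla W \star \rho)(x^\star) / \|\rho\|_\infty$ over such $\rho$. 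Reading $\Delta W$ in the PDE as $\nabla \cdot \nabla W$ is essential for singular potentials: for the Newtonian potential $\nabla \cdot (\nabla W_N \star \rho) = -\rho$, giving $c_W = 1$. Inserting the bound yields \cref{eq:partial+}.

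For sharpness, fix $c > c_W$. By definition of $c_W$ as an infimum there is an admissible $\rho_0$ with maximum at some $x^\star$ satisfying $-\nabla \cdot (\nabla W \star \rho_0)(x^\star)/\|\rho_0\|_\infty < c$. Translating $\rho_0$ so that $f(x^\star)$ is as close to $\|f\|_\infty$ as required, and flattening the peak so that $\Delta \rho_0(x^\star) = 0$, I get
\begin{equation*}
    \partial_t \rho_0(x^\star) = \rho_0(x^\star)\, \nabla \cdot (\nabla W \star \rho_0)(x^\star) + f(x^\star) > \|f\|_\infty - c\|\rho_0\|_\infty^2.
\end{equation*}
Since $\|\rho_h\|_\infty \ge \rho_h(x^\star)$ implies $\partial_t^+ \|\rho_0\|_\infty \ge \partial_t \rho_0(x^\star)$, this violates \cref{eq:partial+} with $c$ in place of $c_W$.

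The hardest part will be the compactness step: securing enough joint continuity and uniform spatial decay of $\rho_t$ that the maximizer $x_{t+h}^\star$ cannot escape to infinity as $h \downarrow 0$, and that the subsequence limit can be taken rigorously. This is delicate because $\nabla W$ may be singular at the origin, so $\nabla W \star \rho_t$ and its divergence need careful control near any concentration of $\rho_t$, probably via bootstrapping from the mild-solution formula using the regularity of $f$ and $\rho_0$ in \cref{ass}. A secondary subtlety in the sharpness half is ensuring that a near-optimizer of the infimum defining $c_W$ can be taken in the admissible class of initial conditions with $\Delta \rho_0(x^\star) = 0$, which should follow from a standard smoothing and cut-off construction.
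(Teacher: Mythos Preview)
Your proposal is correct and follows essentially the same route as the paper: reduce to a pointwise bound at a spatial maximum via a max-point/compactness argument (the paper's \cref{lem:partial+,lem:rhoreg}), then use the key inequality $\nabla\cdot(\nabla W\star\rho)(x^\star)\le -c_W\|\rho\|_\infty$ at a maximum (the paper's \cref{lem:divbound}, proved via the distributional identity in \cref{lem:etaW}), and for sharpness construct a near-optimizer with $\Delta\rho_0(x^\star)=0$ placed where $f$ is near its supremum. Two small points where the paper is more careful: first, your subsequence argument for $\partial_t^+\|\rho_t\|_\infty\le\partial_t\rho_t(x^\star)$ implicitly needs $\partial_t\rho$ jointly continuous down to the base time, which fails at $t=0$ for rough $\rho_0$; the paper circumvents this with an $\varepsilon,\varepsilon'$ contradiction argument in \cref{lem:partial+} that only ever invokes differentiability at strictly positive times. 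Second, the paper proves the pointwise estimate and its sharpness directly from the explicit formula $c_W=\eta_W-\int(\Delta W)_+$ rather than treating $c_W$ as an abstract infimum, which makes the sharpness construction concrete (approximate $\ind_{\{\Delta W>0\}}$ by smooth bumps).
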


The differential inequality \cref{eq:partial+} can be turned into an explicit upper bound, which implies the positive assertions of \cref{thm:MP,thm:Mproperty}. See \cref{cor:partial+} for a precise statement, and \cref{fig:tanhbound} for an illustration. We remark that counterexamples for $\rho_0$ in the negative statements of all three theorems can be chosen to be infinitely differentiable and compactly supported. 

\begin{figure}
    \centering
    \begin{tikzpicture}[scale=1.7]
        \def\tmax{4}
        \def\ymax{2}
        \def\M{1}
        \draw[->] (0,0) -- (\tmax,0) node[anchor=west] {$t$};
        \draw[-|] (0,0) -- (0,\M) node[anchor=east] {$M$};
        \draw[->] (0,\M) -- (0,\ymax);
        \draw[gray,dashed] (0,\M) -- (\tmax,\M);

        \foreach \tshift in {1.1,.65,.3}
        {
            \draw[domain=0:\tmax,smooth,variable=\t,color=teal] plot ({\t},{\M*tanh(\M*(\t+\tshift))});
        }
        \node[color=teal] at ({\tmax/2},{\ymax/4}) {$\left\|\rho_0\right\|_\infty < M$};

        \foreach \tshift in {1.2,.8,.6}
        {
            \draw[domain=0:\tmax,smooth,variable=\t,color=orange] plot ({\t},{\M*cosh(\M*(\t+\tshift))/sinh(\M*(\t+\tshift))});
        }
        \node[color=orange] at ({\tmax/2},{3*\ymax/4}) {$\left\|\rho_0\right\|_\infty > M$};
    \end{tikzpicture}
    \caption{Evolution of the upper bound on $\left\|\rho_t\right\|_\infty$ in \cref{cor:partial+} for $c_W > 0$ and different values of $\left\|\rho_0\right\|_\infty$.}
    \label{fig:tanhbound}
\end{figure}

\begin{remark}\label{rem:timedependentf}
    If the immigration $f$ is time-dependent with sufficient regularity, 
    then existence and uniqueness for solutions to \cref{eq:classicalpde} still hold, and \cref{thm:partial+} remains true with \cref{eq:partial+} replaced by \[
        \partial_t^+ \left\|\rho_t\right\|_\infty \le \left\|f_t\right\|_\infty - c_W \left\|\rho_t\right\|_\infty^2, \quad t\in [0,T^\star).
    \] In particular, $T^\star = \infty$ and $\sup_{t \ge 0} \left\|\rho_t\right\|_\infty < \infty$ for any initial condition as long as $c_W > 0$ and $\sup_{t \ge 0} \left\|f_t\right\|_\infty < \infty$.
\end{remark}

We now define the index $c_W$. Denote by
\begin{equation}\label{eq:defWbar}
    \left<\nabla W \right> (R) \coloneqq \frac{1}{|\partial B(0,R)|} \int\limits_{\partial B(0,R)} \nabla W \cdot \diff \widehat{n}
\end{equation}
the average radial part of $\nabla W$ at a distance $R > 0$, where $|\partial B(0,R)| = c_d R ^{d-1}$ is the surface area of $B(0,R)$. Recall the definition of the Newtonian potential from \cref{eq:newtonian}. 

\begin{lemma}\label{lem:alphaW}
    The following limits exist,
    \begin{equation}\label{eq:alphaWdef}
        \eta_W \coloneqq \lim_{r \to 0} \frac{\left<\nabla W \right> (r)}{\left<\nabla W_N \right> (r)},\qquad \alpha_W \coloneqq \lim_{R \to \infty} \frac{\left<\nabla W \right> (R)}{\left<\nabla W_N \right> (R)},
    \end{equation}
    with $\eta_W \in \R$ and $\alpha_W \in [0,\infty]$, and
        $\alpha_W  = \eta_W - \int (\Delta W).$
\end{lemma}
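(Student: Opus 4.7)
The natural move is to recognise the radial averages as sphere fluxes and couple them via Gauss's theorem. A direct computation gives $\nabla W_N(x) = -c_d^{-1} |x|^{-d} x$ and hence $\left<\nabla W_N\right>(R) = -c_d^{-1} R^{1-d}$ for every $R > 0$. Setting $F(R) := \int_{\partial B(0,R)} \nabla W \cdot \diff \widehat{n} = c_d R^{d-1} \left<\nabla W\right>(R)$, this yields
\[
    \frac{\left<\nabla W\right>(R)}{\left<\nabla W_N\right>(R)} = -F(R),
\]
so the existence of $\eta_W$ and $\alpha_W$ is equivalent to the existence of limits of $F$ at $0$ and $\infty$. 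Since $W \in C^2(\R^d \setminus \{0\})$, the divergence theorem on the annulus $B(0,R)\setminus B(0,r)$ for any $0 < r < R$ supplies the key identity
\[
    F(R) - F(r) = \int_{B(0,R)\setminus B(0,r)} \Delta W\, \diff x,
\]
which will let both limits be controlled by integrals of $\Delta W$.

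For $\eta_W$, I fix some $r_0 > 0$ and note that local integrability of $\Delta W$ from \cref{ass}(ii) makes $|\Delta W|$ integrable on $B(0,r_0)$, so dominated convergence gives $\int_{B(0,r_0)\setminus B(0,r)} \Delta W\, \diff x \to \int_{B(0,r_0)} \Delta W\, \diff x \in \R$ as $r \to 0$; hence $F(r)$ has a finite limit and $\eta_W \in \R$. For $\alpha_W$, I fix $R_0 > 0$ and apply monotone convergence to whichever of $(\Delta W)_\pm$ is globally integrable (\cref{ass}(iii)) to conclude that $\int_{B(0,R)\setminus B(0,R_0)} \Delta W\, \diff x$ has a limit in $[-\infty,\infty]$, and hence so does $F(R)$. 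Because $W$ is eventually decreasing, $\left<\nabla W\right>(R) \le 0$ for all $R$ sufficiently large, so the limit of $F(R)$ lies in $[-\infty, 0]$ and $\alpha_W = -\lim_{R\to\infty} F(R) \in [0,\infty]$. Finally, letting $r \to 0$ and $R \to \infty$ in the displayed identity delivers $\alpha_W - \eta_W = -\int \Delta W$, i.e.\ $\alpha_W = \eta_W - \int \Delta W$.

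The only step that calls for real care is the finiteness of $\eta_W$, and it hinges squarely on the local integrability of $\Delta W$ in \cref{ass}(ii): this is what tames the singular behaviour at the origin just enough for the flux $F(r)$ to stay bounded. If that hypothesis were dropped, $\eta_W$ could blow up — for instance $W(x) = |x|^{-s}$ with $d-2 < s < d-1$ has locally integrable $\nabla W$ but not $\Delta W$, and one computes $F(r) = -s c_d r^{d-s-2} \to -\infty$, giving $\eta_W = +\infty$. Everything else reduces to routine monotone/dominated convergence and a single application of Gauss on an annulus.
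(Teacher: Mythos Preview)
Your argument is correct, and it is genuinely more direct than the paper's. The paper does not apply the divergence theorem on annuli; instead it first proves the auxiliary distributional identity (Lemma~\ref{lem:etaW2}) $\nabla\cdot(\nabla W\star g)=(\Delta W)\star g-\eta_W' g+\xi_W\cdot\nabla g$ via a mollification argument, then tests this against an approximate identity $g_\varepsilon$ on the full ball $B(0,R)$ and takes $\varepsilon\to 0$ with some care (splitting $\Delta W$ into positive and negative parts and controlling the symmetric difference $B(0,R)\,\Delta\,B(y,R)$) to arrive at the same flux identity $-\eta_W'+\int_{B(0,R)}\Delta W = c_d R^{d-1}\langle\nabla W\rangle(R)$. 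Only then does it read off existence of $\eta_W$, the equality $\eta_W=\eta_W'$, and the relation to $\alpha_W$. Your route sidesteps all of this by working on the annulus $B(0,R)\setminus B(0,r)$, where $W$ is genuinely $C^2$ and the classical divergence theorem applies with no regularisation needed; local integrability of $\Delta W$ and Assumption~\ref{ass}(iii) then do the rest exactly as you describe. The trade-off is that the paper's detour is not wasted effort: Lemma~\ref{lem:etaW2} is needed independently for the maximum-principle machinery (Lemma~\ref{lem:divbound}), and the paper's proof of Lemma~\ref{lem:alphaW} doubles as the verification that the distributional coefficient $\eta_W'$ agrees with the flux limit $\eta_W$. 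If one adopts your proof of Lemma~\ref{lem:alphaW}, that identification would have to be supplied separately.
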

The quantities $\eta_W$ and $\alpha_W$ compare the strength of the repulsion to that of the Newtonian potential at short and long range, respectively.
As we will see in \cref{lem:etaW} below, $\eta_W$ also determines the singular behaviour of $W$ at the origin, in that $\Delta W $ contains, in a distributional sense, a multiple $-\eta_W$ of the Dirac delta at zero. Note that \cref{lem:alphaW} implies $\int(\Delta W)_+ < \infty$, because $\eta_W \in \R$ and $\alpha_W \ge 0$. Thus,
\begin{equation}\label{eq:cW}
    c_W \coloneqq \eta_W - \int (\Delta W)_+ 
\end{equation}
is well-defined. If $\alpha_W < \infty$ then $c_W = \alpha_W - \int (\Delta W)_-$ by \cref{lem:alphaW}, so in fact 
\begin{equation}\label{eq:cWsym}
    c_W = \frac{1}{2} \left( \alpha_W + \eta_W - \int |\Delta W| \right),\quad \text{ if $\alpha_W < \infty$.}
\end{equation} 

Our main results apply positively only to potentials with $c_W > 0$. From the different representations of $c_W$ it follows that $\alpha_W > 0$ and $\eta_W > 0$ are both necessary conditions. That is, the critical strength of the repulsion is $|\nabla W(x)| \sim |x|^{1-d}$, and the repulsion potential has to have a (Newtonian) singularity at the origin. The latter is not entirely surprising, since we would not generally expect to be able to find strong bounds on the supremum norm (like \cref{eq:MP,eq:Mproperty,eq:partial+}) of the solution to a PDE with smooth non-local interaction. Before we give examples for $W$, we note that \cref{eq:cW} and linearity of $\eta_W$ in $W$ imply that 
\begin{equation}\label{eq:cWlinear}
    c_{W+W'} \ge c_W + c_{W'}, \qquad c_{aW} = a c_W,
\end{equation}
for any $a > 0$ and potentials $W,W'$ satisfying \cref{ass}. This implies that the class of potentials with $c_W > 0$ is closed under positive linear combinations, as well as small perturbations: If $c_W > 0$ and $\widetilde{W}$ is a perturbative potential, then $c_{W + \varepsilon \widetilde{W}} > 0$ for small $\varepsilon > 0$, in fact as long as $\varepsilon < c_W |c_{\widetilde{W}}|^{-1}$. An example for a perturbation could be a smooth potential that decays faster than Newtonian (so that $\eta_W = \alpha_W = 0$), in which case $c_{\widetilde{W}} = -\frac{1}{2} \int |\Delta \widetilde{W}|$ by \cref{eq:cWsym}.

Examples for interaction potentials that satisfy \cref{ass} include repulsive power laws \[
    W(x) = -P_A(x) \coloneqq -
    \begin{cases}
        \frac{|x|^A}{A} , & A \neq 0,\\
        \log |x| , & A = 0,
    \end{cases}
\] with $1 \ge A \ge 2 - d$ (for \cref{thm:wellposedness} to hold it is sufficient if $A > 1 - d$), and Morse potentials \cite{morse},
\begin{equation}\label{eq:morse}
    W(x) = -C_A \e^{-|x| / \ell_A} + C_R \e^{-|x| / \ell_R},
\end{equation}
with $C_A,C_R,\ell_A,\ell_B > 0$, which are repulsive at long range if $\ell_R < \ell_A$. Of these the most natural example with $c_W > 0$ is the Newtonian potential with $\alpha_W = \eta_W = c_W = 1$, which in this notation is $W_N = -c_d^{-1} P_{2-d}$. Another family of examples is a mixture of repulsive power laws \[
    W(x) =W_N(x) - P_A(x) ,\qquad 1 \ge A > 2 - d,
\] which have $|\nabla W(x)| \sim |x|^{1-d}$ at short and $|\nabla W(x)| \sim |x|^{A-1}$ at long range, and $\alpha_W = \infty$, $\eta_W = c_W = 1$. Morse potentials have $\eta_W = \alpha_W = 0$ and $c_W < 0$, but could be used as a perturbation, see the earlier discussion following \cref{eq:cWlinear}.

\section{Proofs}\label{sec:proofs}


We write $C$ for an unimportant positive constant whose value may change from one appearance to the next. For $\gamma \ge 0$,
\begin{equation}\label{eq:defWgamma}
    \mathcal{X}^\gamma \coloneqq \mathcal{W}^{\gamma,1}\cap \mathcal{W}^{\gamma,\infty},\quad \nnorm{\cdot }_\gamma \coloneqq \left\|\cdot \right\|_{\mathcal{W}^{\gamma,1}} + \left\|\cdot \right\|_{\mathcal{W}^{\gamma,\infty}},
\end{equation}
as well as $\mathcal{X}\coloneqq \mathcal{X}^0 = L^1\cap L^\infty$ and $\nnorm{\cdot }\coloneqq \nnorm{\cdot }_0 = \left\|\cdot \right\|_{L^1}+\left\|\cdot \right\|_\infty$, and $\mathcal{X}_+ \coloneqq \left\{ f\in \mathcal{X}\colon f\ge 0 \right\} $.
If $g\colon \R^d\to \R^m$ for some $m\in \N$, then $\left\|g\right\|_{L^p} \coloneqq \sum_{i=1}^m \left\|g_i\right\|_{L^p}$, similarly for other norms. Recall \cref{eq:defWgamma}, and note that by \cref{lem:embedding}, $\mathcal{X}^\gamma$ with this norm is a Banach space which embeds continuously into $C^{\left\lfloor \gamma \right\rfloor -1,1}(\R^d)$ if $\gamma \ge 1$, and into $C^{\left\lfloor \gamma \right\rfloor , \gamma - \left\lfloor \gamma \right\rfloor }(\R^d)$ if $\gamma \in (0,\infty)\setminus \N$. Here, $C^{k,\beta}(\R^d)$ for $k\in \N_0$ and $\beta \in (0,1]$ is the space of functions $g\colon \R^d\to \R$ which are $k$ times continuously differentiable and for which 
\begin{equation}\label{eq:Ckbeta}
    \left\|g\right\|_{C^{k,\beta}} \coloneqq \sum_{\left| \alpha \right| < k} \left\|\partial^\alpha g\right\|_{L^\infty} + \sum_{\left| \alpha \right| = k} \sup_{x\neq y} \frac{\left| \partial^\alpha g(x) - \partial^\alpha g(y) \right| }{\left| x-y \right|^\beta } < \infty,
\end{equation} with the usual notational conventions for multi-indices $\alpha$. 

\subsection{Existence and Regularity of Solutions}\label{sec:wellposed}
This section contains proofs of local in time well-posedness and regularity of solutions to \cref{eq:classicalpde}. They only require \cref{ass}(i), and \cref{ass}(ii) for $\nabla W$, but no assumptions on $\Delta W$. Note that \cref{ass}(i) becomes $f\in \mathcal{X}^{\gamma_f}$ in the notation \cref{eq:defWgamma}. Many of the ideas in this section were inspired by arguments in Section 2 of \cite{joseheatkernel}.

We begin by establishing well-posedness of a weak version of \cref{eq:classicalpde}. More precisely, by Duhamel's formula we can formally rewrite \cref{eq:classicalpde} with initial condition $\rho_0\in \mathcal{X}$ as an integral equation
\begin{equation}\label{eq:duhamelpde}
    \rho_t = G_t \star \rho_0 + \int_0^t G_s\star f\dd s + \int_0^t \nabla G_{t-s} \star (\rho_s\nabla W\star \rho_s)\dd s,
\end{equation} where $(G_s\colon \R^d \to (0,\infty))_{s > 0}$ denotes the heat kernel, and $F \star H \coloneqq \sum_{i=1}^d F_i \star H_i$ for vector fields $F$ and $H$.

\begin{lemma}\label{lem:W1W2}
    If $\gamma \ge 0$ then there exists $C > 0$ such that for any $g\in \mathcal{X}^\gamma$, \[
    \left\|\nabla W\star g\right\|_{\mathcal{W}^{\gamma,\infty}} \le C \nnorm{g}_\gamma.
    \] In particular, $\nnorm{g \nabla W \star g} \le C \nnorm{g}^2$ 
\end{lemma}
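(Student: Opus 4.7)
The plan is to decompose $\nabla W$ according to its integrability behaviour: write $\nabla W = V_1 + V_\infty$ where $V_1 \coloneqq \nabla W\, \ind_{B(0,1)}$ and $V_\infty \coloneqq \nabla W\, \ind_{B(0,1)^c}$. \cref{ass}(ii) immediately gives $V_1 \in L^1(\R^d)$ (by local integrability of $\nabla W$) and $V_\infty \in L^\infty(\R^d)$ (by boundedness away from the origin). This split is essential because $\nabla W$ itself is generally neither in $L^1$ (due to its potential singularity at $0$) nor in $L^\infty$ (due to its potentially slow decay), so a single application of Young's inequality to $\nabla W \star g$ would fail.

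For the main estimate I would apply a Young-type inequality for Sobolev spaces to each piece:
\[
    \|V_1 \star g\|_{\mathcal{W}^{\gamma,\infty}} \le \|V_1\|_{L^1}\, \|g\|_{\mathcal{W}^{\gamma,\infty}}, \qquad \|V_\infty \star g\|_{\mathcal{W}^{\gamma,\infty}} \le \|V_\infty\|_{L^\infty}\, \|g\|_{\mathcal{W}^{\gamma,1}}.
\]
For integer $\gamma$ both follow at once by commuting $D^\alpha$ with the convolution and invoking classical Young; in the fractional case one reduces to the Gagliardo/translation-difference characterization of $\mathcal{W}^{\gamma,p}$ recalled in \cref{app:sobolev} and uses $\tau_h(V \star g) = V \star \tau_h g$, to which the same Young bounds apply. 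Summing the two estimates and absorbing $\|V_1\|_{L^1}$ and $\|V_\infty\|_{L^\infty}$ into the constant yields $\|\nabla W \star g\|_{\mathcal{W}^{\gamma,\infty}} \le C\nnorm{g}_\gamma$.

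The ``in particular'' statement follows by specialising to $\gamma = 0$, which gives $\|\nabla W \star g\|_{L^\infty} \le C\nnorm{g}$, and combining with the pointwise bound $|g\cdot(\nabla W \star g)| \le |g|\,\|\nabla W \star g\|_{L^\infty}$ to obtain
\[
    \|g\,(\nabla W \star g)\|_{L^1} + \|g\,(\nabla W \star g)\|_{L^\infty} \le (\|g\|_{L^1} + \|g\|_{L^\infty})\, \|\nabla W \star g\|_{L^\infty} \le C\nnorm{g}^2.
\]
The only mildly subtle point is the fractional case of the Young-type estimate, for which one must verify that the Gagliardo seminorm behaves well under convolution using the material collected in \cref{app:sobolev}; the integer case, and consequently the ``in particular'' claim and everything else needed in \cref{sec:wellposed}, is a routine application of the triangle inequality together with the classical Young inequality applied to each piece of the decomposition.
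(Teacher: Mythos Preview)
Your proposal is correct and essentially identical to the paper's proof: the paper uses the same decomposition $\nabla W = F_1 + F_2$ with $F_1 = \nabla W\,\ind_{B(0,1)} \in L^1$ and $F_2 = \nabla W\,\ind_{\R^d\setminus B(0,1)} \in L^\infty$, applies the fractional Young inequality (citing an external reference rather than sketching the translation-difference argument you give), and derives the ``in particular'' clause the same way via $\|g\,\nabla W\star g\|_{L^p} \le \|g\|_{L^p}\|\nabla W\star g\|_{L^\infty}$ for $p\in\{1,\infty\}$.
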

\begin{proof}
    Denote $F_1 \coloneqq \nabla W \ind_{B(0,1)}\in L^1$ and $F_2 \coloneqq \nabla W \ind_{\R^d\setminus B(0,1)}\in L^\infty$. Then, using the fractional version of Young's convolution inequality (see Theorem A.1 in \cite{joseheatkernel}),
    \begin{align*}
        \left\|\nabla W\star g\right\|_{\mathcal{W}^{\gamma,\infty}}
        &\le \left\|F_1 \star g\right\|_{\mathcal{W}^{\gamma,\infty}} + \left\|F_2\star g\right\|_{\mathcal{W}^{\gamma,\infty}}\\
        &\le C( \left\|F_1\right\|_{L^1} \left\|g\right\|_{\mathcal{W}^{\gamma,\infty}} + \left\|F_2\right\|_{L^\infty} \left\|g\right\|_{\mathcal{W}^{\gamma,1}})\\
        &\le C (\left\|F_1\right\|_{L^1} + \left\|F_2\right\|_{L^\infty}) \nnorm{g}_\gamma.
    \end{align*}
    For the additional claim, if $p\in \left\{ 1,\infty \right\} $, then \[
        \left\|g \nabla W \star g\right\|_{L^p} \le \left\|g\right\|_{L^p} \left\|\nabla W \star g\right\|_{L^\infty} \le C \nnorm{g}^2.
    \] 
\end{proof}

We now prove well-posedness of \cref{eq:duhamelpde} locally in time. Standard facts that we will use repeatedly are that, for $\gamma \ge 0$ and $s > 0$,
\begin{equation}\label{eq:GsW}
    \left\|G_s\right\|_{\mathcal{W}^{\gamma,1}} \le C s^{-\gamma / 2},\qquad \left\|\nabla G_s\right\|_{\mathcal{W}^{\gamma,1}} \le C s ^{- (1+\gamma) / 2}.
\end{equation} (See, for example, \cite[p.6]{joseheatkernel}.)

\begin{theorem}[Local in time well-posedness]\label{thm:wellposed}
    Given $\rho_0\in \mathcal{X}$, there is a $T^\star \in (0,\infty]$ and a $\rho\in C([0,T^\star),\mathcal{X})$ which solves \cref{eq:duhamelpde} started at $\rho_0$ and such that any solution $\widetilde{\rho}\in C([0,T),\mathcal{X})$ of \cref{eq:duhamelpde} starting at $\rho_0$ satisfies $T \le T^\star$ and coincides with $\rho$ on $[0,T)$. Furthermore, if $T^\star < \infty$ then $\left\|\rho_t\right\|_\infty\to \infty$ as $t \uparrow T^\star$. If $\rho_0$, $f$, and $W$ are radially symmetric, then so is $\rho_t$ for all $t \in [0,T^\star)$.
\end{theorem}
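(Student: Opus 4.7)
I would prove the theorem by a Banach fixed-point argument for the Duhamel map
\[
    \Phi(\rho)_t \;\coloneqq\; G_t\star\rho_0 + \int_0^t G_s\star f\,\diff s + \int_0^t \nabla G_{t-s}\star\bigl(\rho_s\,\nabla W\star\rho_s\bigr)\,\diff s
\]
on $C([0,T],\mathcal{X})$ equipped with the sup-in-time version of $\nnorm{\cdot}$, together with a standard maximal-extension argument and a Kato-type $L^1$ estimate for the blow-up criterion.

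\textbf{Local existence and uniqueness.} The linear part $L(t)\coloneqq G_t\star\rho_0 + \int_0^t G_s\star f\,\diff s$ lies in $C([0,T],\mathcal{X})$ with $\sup_{t\le T}\nnorm{L(t)}\le \nnorm{\rho_0} + T\nnorm{f}$ by Young's inequality and $\|G_s\|_{L^1}=1$. For the nonlinear term, \cref{lem:W1W2} gives $\nnorm{g\,\nabla W\star g}\le C\nnorm{g}^2$ while \cref{eq:GsW} gives $\|\nabla G_s\|_{L^1}\le Cs^{-1/2}$; Young's convolution inequality then produces a factor of order $\sqrt{T}$ in front of a quadratic dependence on the $C([0,T],\mathcal{X})$-norm of $\rho$. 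Bilinearity of $g\mapsto g\,\nabla W\star g$ promotes this to a Lipschitz estimate on balls with Lipschitz constant of order $\sqrt{T}\,R$, so that choosing $R\coloneqq 2(\nnorm{\rho_0}+T\nnorm{f})$ and $T$ small (depending only on $\nnorm{\rho_0}+\nnorm{f}$) makes $\Phi$ a contraction on the corresponding ball, giving a unique fixed point in $C([0,T],\mathcal{X})$.

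\textbf{Maximal extension and blow-up criterion.} Letting $T^\star$ be the supremum of existence times, the contraction argument can be restarted from any $t_0<T^\star$ with initial datum $\rho_{t_0}\in\mathcal{X}$, and uniqueness makes the resulting local solutions compatible, yielding a maximal solution in $C([0,T^\star),\mathcal{X})$. Since the lower bound on the local time step depends only on $\nnorm{\rho_{t_0}}+\nnorm{f}$, $T^\star<\infty$ forces $\nnorm{\rho_t}\to\infty$. To show the blow-up occurs in the $L^\infty$ component, I would exploit that the nonlinear term enters the PDE in divergence form: a Kato-type argument (formally, multiplying the PDE by $\operatorname{sign}(\rho)$ and integrating in $x$, the drift contribution vanishes because $\int\nabla\cdot(|\rho|\,v)\,\diff x=0$ for $v=\nabla W\star\rho$, and the Laplacian contribution is nonpositive by Kato's inequality) yields $\left\|\rho_t\right\|_{L^1}\le \left\|\rho_0\right\|_{L^1}+t\left\|f\right\|_{L^1}$. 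The main technical obstacle is making this estimate rigorous for the mild solution, via spatial mollification of $\rho$, controlling the resulting commutator with the nonlocal drift, and passing to the limit; once granted, $\left\|\rho_t\right\|_{L^1}$ stays bounded on $[0,T^\star)$ whenever $T^\star<\infty$, so the blow-up of $\nnorm{\rho_t}$ must occur in $\left\|\rho_t\right\|_\infty$.

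\textbf{Radial symmetry.} If $\rho_0$, $f$, and $W$ are radial then, for any rotation $R\in\mathrm{SO}(d)$, rotation-invariance of the heat kernel, equivariance of $\nabla W$ under rotations of its argument, and rotation-invariance of convolution together imply that $x\mapsto\rho_t(Rx)$ also solves \cref{eq:duhamelpde} with the same data, so uniqueness forces $\rho_t(Rx)=\rho_t(x)$ for every $t\in[0,T^\star)$.
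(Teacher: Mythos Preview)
Your approach matches the paper's almost exactly: the same Duhamel contraction on $C([0,T],\mathcal{X})$ using \cref{lem:W1W2} and \cref{eq:GsW} to extract a $\sqrt{T}$ factor in front of a quadratic term, the same restart/maximal-extension argument with the local step size depending only on $\nnorm{\rho_{t_0}}$.

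The one substantive difference is the $L^1$ control needed for the blow-up criterion. You propose a Kato-type estimate (multiply by $\operatorname{sign}(\rho)$, integrate, use the divergence structure) and, rightly, flag its rigorous justification on a mere mild solution as the main technical obstacle. The paper avoids this entirely: integrating the Duhamel formula \cref{eq:duhamelpde} over $\R^d$ and using $\int_{\R^d}\nabla G_s=0$ annihilates the nonlinear term outright, giving $\int_{\R^d}\rho_t=\int_{\R^d}\rho_0+t\int_{\R^d}f$ at the mild level with no mollification, no commutators, and no regularity input. To upgrade this to a bound on $\|\rho_t\|_{L^1}$ the paper then invokes non-negativity of $\rho_t$ (proved separately in \cref{lem:L1norm} via a convex approximation of $s\mapsto s_-$). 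Your Kato route buys you an $L^1$ bound without appealing to non-negativity---so it would cover signed initial data as stated in the theorem---but at the cost of the additional machinery you identify; the paper's route is shorter but, strictly speaking, only completes the $L^\infty$-blow-up claim for $\rho_0\in\mathcal{X}_+$.

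For radial symmetry, the paper argues that the Picard iterates stay radial and converge uniformly; your uniqueness-under-rotation argument is an equally valid and arguably cleaner alternative.
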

\begin{proof}
    For fixed $T > 0$ define $\mathcal{F}\colon C([0,T],\mathcal{X}) \to C([0,T],\mathcal{X})$ by the right-hand side (RHS) of \cref{eq:duhamelpde}, that is, \[
    \mathcal{F}[\rho]_t = G_t \star \rho_0+ \int_0^t G_s \star f \dd s + \int_0^t \nabla G_{t-s} \star (\rho_s \nabla W\star \rho_s)\dd s,\quad t \in [0,T].
\] We write $\nnorm{\rho} \coloneqq \sup_{0\le s\le T} \nnorm{\rho_s}$ for $\rho \in C([0,T],\mathcal{X})$. For $p \in \left\{ 1,\infty \right\} $, by Young's convolutional inequality and \cref{lem:W1W2},
    \begin{align*}
        \left\|\mathcal{F}[\rho]_t\right\|_{L^p}
        &\le \left\|G_t\right\|_{L^1} \left\|\rho_0\right\|_{L^p} + \int_0^t \left\|G_s\right\|_{L^1} \left\|f\right\|_{L^p}\dd s \\
        &\qquad + \int_0^t \left\|\nabla G_{t-s}\right\|_{L^1} \left\|\rho_s\right\|_{L^p} \left\|\nabla W\star \rho_s\right\|_{L^\infty}\dd s \\
        &\le \left\|\rho_0\right\|_{L^p} + C\left( t + \int_0^t (t-s)^{-1 / 2} \left\|\rho_s\right\|_{L^p} \nnorm{\rho_s}\dd s\right)\\
        &\le \nnorm{\rho_0} + C\left(t + \sqrt{t} \sup_{s\le t} \nnorm{\rho_s}^2\right).
    \end{align*}
    This implies
    \begin{equation}\label{eq:prflocaltime1}
        \nnorm{\mathcal{F}[\rho]} \le \nnorm{\rho_0} + C \left(T + \sqrt{T} \nnorm{\rho}^2\right).
    \end{equation}
    (In particular, $\mathcal{F}[\rho]_t \in \mathcal{X}$ so $\mathcal{F}$ is well-defined.) By a similar argument, if $\rho^1,\rho^2\in C([0,T],\mathcal{X})$, then
    \begin{align*}
        \big\|\mathcal{F}[\rho^1]_t - \mathcal{F}[\rho^2]_t\big\|_{L^p}
        &\le \int_0^t \left\|\nabla G_{t-s}\right\|_{L^1} \left\|\rho_s^1 \nabla W\star \rho_s^1 - \rho_s^2\nabla W\star\rho_s^2\right\|_{L^p}\dd s\\
        &= \int_0^t \left\|\nabla G_{t-s}\right\|_{L^1} \left\|\rho_s^1 \nabla W\star (\rho_s^1-\rho_s^2) + (\rho_s^1-\rho_s^2) \nabla W\star \rho_s^2\right\|_{L^p}\dd s\\
        &\le C\int_0^t (t-s)^{-1 / 2} \Big[ \left\|\rho_s^1\right\|_{L^p} \nnorm{\rho_s^1-\rho_s^2} + \left\|\rho_s^1-\rho_s^2\right\|_{L^p} \nnorm{\rho_s^2} \Big] \dd s\\
        &\le C \sqrt{t} \sup_{s\le t} \left( \nnorm{\rho^1_s} + \nnorm{\rho^2_s} \right) \nnorm{\rho^1_s-\rho^2_s},
    \end{align*}
    so
    \begin{equation}\label{eq:prflocaltime2}
        \nnorm{\mathcal{F}[\rho^1] - \mathcal{F}[\rho^2]} \le C \sqrt{T} \left( \nnorm{\rho^1}+\nnorm{\rho^2} \right) \nnorm{\rho^1-\rho^2}.
    \end{equation}
    Put $\mathcal{Y} = \left\{ g \in \mathcal{X}\colon \nnorm{g}\le \nnorm{\rho_0}+1 \right\} $. Then by \cref{eq:prflocaltime1,eq:prflocaltime2}, $T > 0$ can be chosen small enough so that $\mathcal{F}$ maps to itself and is a contraction on the space $C([0,T],\mathcal{Y})$. This shows that \[
        T^\star \coloneqq \sup\left\{ t \ge 0\colon \text{there is a solution in $C([0,T],\mathcal{X})$ to \cref{eq:duhamelpde}} \right\} > 0.
        \] Suppose now that $\rho$ and $\widetilde{\rho}$ are two different solutions to \cref{eq:duhamelpde} on $[0,t]$ for some $t \in (0,T^\star]$, starting at $\rho_0$. Let \[
        t_0 \coloneqq \sup \left\{ s \in [0,t]\colon \rho(r) = \widetilde{\rho}(r) \,\forall r\le s \right\} \in [0,t].
        \] Suppose that $t_0 < t$, let $t_1 \coloneqq t_0+\delta \in (t_0,t)$ for some $\delta \in (0,t-t_0)$, and define $\mathcal{F}\colon C([t_0,t_1],\mathcal{X}) \to C([t_0,t_1],\mathcal{X}) $ by  \[
        \mathcal{F}[u]_s = G_{s-t_0}\star \rho(t_0) + \int_{t_0}^{s} G_{s-r}\star f \dd r + \int_{t_0}^s \nabla G_{s-r}\star (u_r \nabla W\star u_r)\dd r,\quad s \in [t_0,t_1].
    \] Put $K\coloneqq \nnorm{\rho}\vee \nnorm{\widetilde{\rho}}$. Similarly to before we can show that $\nnorm{\mathcal{F}[u]_s}\le K + C(\delta+\sqrt{\delta} \nnorm{u}^2)$ for some $C > 0$ and all $s\in[t_0,t_1]$, hence for sufficiently small $\delta > 0$, $\mathcal{F}$ maps to itself and, by an argument identical to that leading to \cref{eq:prflocaltime2}, is a contraction on $C([t_0,t_1],\mathcal{Y})$, where $\mathcal{Y} = \left\{ g\in \mathcal{X}\colon \nnorm{g}\le K+1 \right\} $, so it has a unique fixed point. Since the restrictions of both $\rho$ and $\widetilde{\rho}$ to $[t_0,t_1]$ are fixed points of $\mathcal{F}$, we conclude they must coincide on $[t_0,t_1]$, contradicting the definition of $t_0$.

    We have proved that all solutions must coincide at all times where both are defined, in particular there exists a $\rho \in C([0,T^\star),\nnorm{\cdot })$ which solves \cref{eq:duhamelpde}, and such that any solution $\widetilde{\rho}\in C([0,T),\nnorm{\cdot })$ of \cref{eq:duhamelpde} satisfies $T \le T^\star$ and coincides with $\rho$ on $[0,T)$.

    Now suppose that $T^\star < \infty$, in which case we show that \[
        \nnorm{\rho(t)} \longrightarrow \infty,\quad t \uparrow T^\star.
    \] This already implies that $\left\|\rho(t)\right\|_\infty \to \infty$ as $t\uparrow T^\star$ because $\sup_{t\in [0,T^\star)} \left\|\rho(t)\right\|_{L^1} < \infty$ as a consequence of \cref{lem:L1norm} below. Assume for contradiction that there is a $K > 0$ and a sequence $0 < T_n \uparrow T^\star$ such that $\nnorm{\rho(T_n)} \le K$ for all $n\in \N$. Then we define $\mathcal{F}_n\colon C([T_n,T_n + \delta],\mathcal{X}) \to C([T_n,T_n + \delta],\mathcal{X}) $ by 
        \begin{align*}
            \mathcal{F}_n[u]_t = G_{t-T_n} &\star \rho(T_n) + \int_{T_n}^{t}G_{t-s}\star f\dd s \\
                                           &+ \int_{T_n}^{t} \nabla G_{t-s} \star (u_s \nabla W\star u_s)\dd s,\qquad T_n\le t \le T_n+\delta,
    \end{align*} for some $\delta > 0$. Similarly to before we show $\nnorm{\mathcal{F}_n[u]} \le K + C(\delta + \sqrt{\delta} \nnorm{u}^2)$ for some $C > 0$, and \cref{eq:prflocaltime2} with $T$ replaced by $\delta$. Now choose $\delta$ small enough that $\mathcal{F}_n$ (maps to itself and) is a contraction on the space $C([T_n,T_n+\delta],\mathcal{Y})$ where $\mathcal{Y} = \left\{g\in \mathcal{X}\colon g\colon  \nnorm{g}\le K+1 \right\} $. This choice of $\delta$ can be made independent of $n$, so there exists $n\in \N$ with $T_n + \delta > T^\star $, and we can concatenate $\rho\big\vert_{[0,T_n]} $ with the fixed point of $\mathcal{F}_n$ to obtain a solution to \cref{eq:duhamelpde} defined on $[0,T_n+\delta] \supsetneq [0,T^\star] $, a contradiction. This also implies that, if $T^\star < \infty$, there cannot be a solution defined on $[0,T^\star]$.

If $f$, $W$, and $\rho_0$ are radially symmetric, then $\mathcal{F}$ preserves radial symmetry, so the fixed point iteration started at the constant in time function $(\rho_t \equiv \rho_0\colon t\in [0,T))$ is radially symmetric at every step and converges to $\rho$ uniformly on $\R^d$, so $\rho$ is radially symmetric.
\end{proof}

For the remainder of this section, we assume some $\rho_0\in \mathcal{X}_+$ to be given, and denote by $\rho \in C([0,T^\star),\mathcal{X})$ the unique solution to \cref{eq:duhamelpde} started at $\rho_0$.

\begin{theorem}[Regularity]\label{thm:regularity}
    For any $\gamma\in (0,2+\gamma_f)$, \[
        \rho \in C([0,T^\star),\mathcal{X}_+) \cap C((0,T^\star),\mathcal{X}^\gamma)
    \] 
    In particular, $\rho$ has bounded $C^{2,\gamma_f}(\R^d)$-norm on compact subsets of $(0,T^\star)$, and solves \cref{eq:classicalpde} in the classical sense on $(0,T^\star)$. If $\nnorm{\rho_0}_\gamma < \infty$ (in particular if $\rho_0 \equiv 0$), then $\rho\in C([0,T^\star),\mathcal{X}^\gamma)$. 
\end{theorem}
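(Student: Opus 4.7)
\emph{Sobolev bootstrap.} The plan is to apply the heat-kernel estimates \cref{eq:GsW} and Young's convolution inequality termwise in the Duhamel formula \cref{eq:duhamelpde}. The semigroup term $G_t\star\rho_0$ lies in $\mathcal{X}^\gamma$ for every $\gamma\ge 0$ and $t>0$, with norm controlled by $Ct^{-\gamma/2}\nnorm{\rho_0}$; the immigration term $\int_0^t G_s\star f\,\dd s$ inherits regularity from $f\in\mathcal{X}^{\gamma_f}$ up to any order $\gamma<\gamma_f+2$, because $\left\|G_s\star f\right\|_{\mathcal{W}^{\gamma_f+\sigma,p}}\le Cs^{-\sigma/2}\left\|f\right\|_{\mathcal{W}^{\gamma_f,p}}$ is $s$-integrable exactly when $\sigma<2$; and for the quadratic term, a fractional product rule combined with \cref{lem:W1W2} should give $\nnorm{\rho_s\,\nabla W\star\rho_s}_{\gamma'}\le C\nnorm{\rho_s}_{\gamma'}^2$, after which \cref{eq:GsW} applied to $\nabla G_{t-s}$ delivers $\mathcal{X}^\gamma$-regularity for any $\gamma<\gamma'+1$. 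Starting from $\rho\in C([0,T^\star),\mathcal{X})$ from \cref{thm:wellposed} and iterating, one gains nearly one derivative per round, so finitely many rounds (three suffice, since $\gamma_f<1$) reach $\rho\in L^\infty_{\mathrm{loc}}((0,T^\star),\mathcal{X}^\gamma)$ for every $\gamma<2+\gamma_f$. Running the same estimates on increments $\rho_{t+h}-\rho_t$ upgrades this to $\rho\in C((0,T^\star),\mathcal{X}^\gamma)$.

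\emph{Non-negativity and classical solution.} With the regularity just obtained, $\rho$ is a classical solution on $(0,T^\star)$ of the \emph{linear} PDE
\[
\partial_t\rho \;=\; \tfrac12\Delta\rho \;+\; b_t\cdot\nabla\rho \;+\; (\nabla\cdot b_t)\,\rho \;+\; f,\qquad b_t := \nabla W\star\rho_t,
\]
whose coefficients are bounded: $b$ by \cref{lem:W1W2}, and $\nabla\cdot b = \Delta W\star\rho$ by splitting $\Delta W$ into a locally integrable part supported in $B(0,1)$ and a bounded part supported in $B(0,1)^c$ (both from \cref{ass}(ii)), then applying Young's inequality to each convolution. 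Since $\rho_0,f\ge 0$, the classical parabolic maximum principle (via the exponential rescaling $v=\e^{-\lambda t}\rho$ to absorb the sign of $\nabla\cdot b$) gives $\rho_t\ge 0$ on $(0,T^\star)$, and continuity in $\mathcal{X}$ extends this up to $t=0$. Parabolic Schauder estimates applied to the same linear equation---whose source $\nabla\cdot(\rho\,\nabla W\star\rho)+f$ is in $C^{0,\gamma_f}$ uniformly on compacts of $(0,T^\star)$ as soon as $\rho\in\mathcal{X}^{1+\gamma_f}$---then deliver the $C^{2,\gamma_f}$ bound, and differentiating \cref{eq:duhamelpde} in $t$ recovers \cref{eq:classicalpde} pointwise.

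\emph{Continuity at $t=0$ and main obstacle.} When $\nnorm{\rho_0}_\gamma<\infty$, strong continuity of the heat semigroup on $\mathcal{X}^\gamma$ yields $G_t\star\rho_0\to\rho_0$ in $\mathcal{X}^\gamma$ as $t\downarrow 0$, while the immigration and quadratic Duhamel terms vanish in $\mathcal{X}^\gamma$ as $t\downarrow 0$ by the estimates above, giving $\rho\in C([0,T^\star),\mathcal{X}^\gamma)$. The main technical obstacle I anticipate is the fractional product inequality $\nnorm{\rho\,\nabla W\star\rho}_{\gamma'}\le C\nnorm{\rho}_{\gamma'}^2$: for non-integer $\gamma'$ this requires a Kato--Ponce-type estimate adapted to the paper's $\mathcal{X}^{\gamma'}$-scale, and one must also verify that the bootstrap closes cleanly against the ceiling $2+\gamma_f$ imposed by the immigration term's smoothing limit.
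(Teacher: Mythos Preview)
Your bootstrap via Duhamel is the same skeleton as the paper's, but the paper sidesteps exactly the obstacle you flag. Instead of demanding the full Kato--Ponce estimate $\nnorm{\rho\,\nabla W\star\rho}_{\gamma'}\le C\nnorm{\rho}_{\gamma'}^2$, it uses only the elementary product bound $\left\|\rho\,\nabla W\star\rho\right\|_{\mathcal{W}^{\gamma'-\varepsilon,p}}\le C\nnorm{\rho}_{\gamma'}^2$ (this is \cref{lem:appalphabeta}, which for $p=1$ needs a strict $\varepsilon$-gap), and absorbs the lost $\varepsilon$ into the heat kernel by putting $\mathcal{W}^{\gamma_f+\varepsilon,1}$ on $\nabla G_{t-s}$. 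The price is that each round gains only $\gamma_f$ derivatives rather than nearly one, so more iterations are needed, but no Kato--Ponce machinery is required. The $C^{2,\gamma_f}$ bound then comes directly from the embedding $\mathcal{X}^\gamma\hookrightarrow C^{2,\gamma-2}$ rather than from Schauder theory, and the classical-solution verification is done by differentiating Duhamel exactly as you propose.

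There is a genuine error in your non-negativity argument: the identity $\nabla\cdot b_t=(\Delta W)\star\rho_t$ is \emph{false} in the setting of this paper. Because $W$ is allowed to be singular at the origin (and is, in the main examples such as the Newtonian potential), $\Delta W$ acquires a distributional correction at zero; the correct formula is $\nabla\cdot(\nabla W\star\rho)=(\Delta W)\star\rho-\eta_W\rho+\xi_W\cdot\nabla\rho$, which is precisely the content of \cref{lem:etaW}. Your maximum-principle route is salvageable, since these extra terms are still bounded once the Sobolev regularity is in hand, but the justification you give does not stand as written.

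The paper takes a different route to non-negativity: rather than the linear maximum principle, it shows $\int(\rho_t)_-\,\dd x=0$ directly, by testing the PDE against a smooth convex approximation $j_\varepsilon$ of $s\mapsto(-s)\vee 0$ and observing that the contribution from the nonlinear term vanishes in the limit because $j_\varepsilon''(s)s\to 0$ (see \cref{lem:L1norm}). This avoids any discussion of the zero-order coefficient's sign and works uniformly in the singularity of $W$.
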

\begin{proof}
    Fix $T \in (0,T^\star)$. Then $\sup_{0\le s \le T} \nnorm{\rho_s}<\infty$, so it suffices to show that for any $\delta > 0$, $\gamma \in [\gamma_f,2+\gamma_f)$,
    \begin{equation}\label{eq:prfreg:iteration}
        \sup_{\delta \le s \le T} \nnorm{\rho_s}_{\gamma - \gamma_f} < \infty \implies \sup_{2\delta \le s \le T} \nnorm{\rho_s}_\gamma < \infty.
    \end{equation}
    Indeed, by iteration this implies that $\rho \in C((t_0,t_1),\mathcal{X}^\gamma)$ for all $0 < t_0 < t_1 < T^\star$ and hence $\rho\in C((0,T^\star),\mathcal{X}^\gamma)$, for all $\gamma \in [0,2+\gamma_f)$.

    So let $\gamma \in [\gamma_f,2+\gamma_f)$, $\delta > 0$, and assume that $\sup_{\delta \le s \le T} \nnorm{\rho_s}_{\gamma - \gamma_f} < \infty$. Let $p\in \left\{ 1,\infty \right\} $ and $t \in [2\delta, T]$. Then, 
    \begin{align}\label{eq:prfreg:10}
        \left\|\rho_t\right\|_{\mathcal{W}^{\gamma,p}}
        &\le \left\|G_t\right\|_{\mathcal{W}^{\gamma,1}} \left\|\rho_0\right\|_{L^p} + \int_0^t \left\|G_s\right\|_{\mathcal{W}^{\gamma-\gamma_f,1}} \left\|f\right\|_{\mathcal{W}^{\gamma_f,p}} \dd s \\
        &\hspace{2cm} + \int_0^t \left\|\nabla G_{t-s} \star(\rho_s \nabla W \star\rho_s)\right\|_{\mathcal{W}^{\gamma,p}} \dd s.\nonumber
    \end{align}
    By \cref{eq:GsW},
    \begin{align}\label{eq:prfreg:31}
        \left\|G_t\right\|_{\mathcal{W}^{\gamma,1}} \left\|\rho_0\right\|_{L^p} + \int_0^t \left\|G_s\right\|_{\mathcal{W}^{\gamma-\gamma_f,1}} \left\|f\right\|_{\mathcal{W}^{\gamma_f,p}} \dd s 
        &\le C \left(t ^{-\gamma / 2} + \int_0^t s ^{-(\gamma - \gamma_f) / 2} \diff s\right) \\
        &\le C \left( \delta ^{-\gamma / 2} + \int_0^T s ^{-(\gamma - \gamma_f ) / 2} \dd s \right),\nonumber
    \end{align}
    which is a constant independent of $t$ given $\delta,\gamma,T$, and finite because $\gamma-\gamma_f < 2$. If $\gamma = \gamma_f$, in particular $\gamma < 1$, then by Young's fractional convolution inequality and \cref{lem:W1W2},
    \begin{align}
        \int_0^t \left\|\nabla G_{t-s} \star(\rho_s \nabla W \star\rho_s)\right\|_{\mathcal{W}^{\gamma,p}} \dd s 
        &\le \int_0^T \left\|\nabla G_{t-s}\right\|_{\mathcal{W}^{\gamma,1}} \left\|\rho_s \nabla W \star \rho_s\right\|_{L^p}\dd s\\
        &\le C \left(\int_0^T s^{-(1+\gamma) / 2}\diff s\right) \sup_{0\le s \le T} \nnorm{\rho_s}^2,\nonumber
    \end{align} 
    which is a finite constant depending only on $T$ and $\gamma_f$. If $\gamma > \gamma_f$, then we fix $0 < \varepsilon < (1-\gamma_f) \wedge (\gamma-\gamma_f)$, and bound, using Young's fractional convolution inequality,
    \begin{align}\label{eq:prfreg:11}
        \int_0^t \left\|\nabla G_{t-s} \star(\rho_s \nabla W \star\rho_s)\right\|_{\mathcal{W}^{\gamma,p}} \dd s
        &\le \int_0^\delta \left\|\nabla G_{t-s}\right\|_{\mathcal{W}^{\gamma,1}} \left\|\rho_s \nabla W \star \rho_s\right\|_{L^p}\dd s \\
        &\hspace{1cm}+ \int_\delta^t \left\|\nabla G_{t-s}\right\|_{\mathcal{W}^{\gamma_f + \varepsilon,1}} \left\|\rho_s \nabla W \star \rho_s\right\|_{\mathcal{W}^{\gamma-\gamma_f - \varepsilon,p}}\dd s . \nonumber
    \end{align}
    By \cref{lem:W1W2,lem:appalphabeta,lem:appalphamonotone}, putting $\beta \coloneqq \gamma - \gamma_f$,
    \begin{equation}\label{eq:prfreg:12}
        \left\|\rho_s \nabla W \star \rho_s\right\|_{\mathcal{W}^{\beta - \varepsilon,p}}
        \le \left\|\rho_s\right\|_{\mathcal{W}^{\beta,p}} \left\|\nabla W\star\rho_s\right\|_{\mathcal{W}^{\beta,\infty}}
        \le C \left\|\rho_s\right\|_{\mathcal{W}^{\beta,p}} \nnorm{\rho_s}_\beta 
        \le C \nnorm{\rho_s}_\beta^2.
    \end{equation}
    Thus,
    \begin{multline}\label{eq:prfreg:13}
        \int_\delta^t \left\|\nabla G_{t-s}\right\|_{\mathcal{W}^{\gamma_f + \varepsilon,1}}  \left\|\rho_s \nabla W \star \rho_s\right\|_{\mathcal{W} ^{\gamma - \gamma_f - \varepsilon,p}} \dd s\\
        \le C \sup_{\delta \le s < T}\nnorm{\rho_s}_{\gamma - \gamma_f} \int_0^T s ^{-(1+\gamma_f+\varepsilon) / 2} \dd s \le C \sup_{\delta \le s \le T} \nnorm{\rho_s}_{\gamma - \gamma_f}.
    \end{multline}
    By \cref{lem:W1W2},
    \begin{align*}
        \int_0^\delta \left\|\nabla G_{t-s}\right\|_{\mathcal{W}^{\gamma,1}} \left\|\rho_s \nabla W \star \rho_s\right\|_{L^p} \dd s \le C \sup_{0 \le s \le T} \nnorm{\rho_s}^2 \cdot \int_\delta^T s ^{-(1+\gamma) / 2} \dd s < \infty,
    \end{align*}
    which, given $\delta, T, \gamma$, is a constant independent of $t$. Combining this with \cref{eq:prfreg:13,eq:prfreg:11,eq:prfreg:31,eq:prfreg:10} gives
    \begin{align*}
        \left\|\rho_t\right\|_{\mathcal{W}^{\gamma,p}} \le C \left( 1 + \sup_{\delta \le s \le T} \nnorm{\rho_s}_{\gamma - \gamma_f}^2 \right)  < \infty,
    \end{align*}
    for any $t \in [2\delta,T]$ and $p\in \left\{ 1,\infty \right\} $, for $C > 0$ that does not depend on $t$ given $\delta$, $T$, and $\gamma$. We conclude \[
    \sup_{2\delta \le s \le T} \nnorm{\rho_s}_\gamma < \infty
\] as claimed. Then recall from \cref{lem:embedding} that $\mathcal{X}^{2+\gamma_f} \hookrightarrow C^{2,\gamma_f}(\R^d)$.

    If $\gamma\in [0,2+\gamma_f)$ and $\nnorm{\rho_0}_\gamma < \infty$, we can bound $\nnorm{G_t \star \rho_0}_\gamma \le \left\|G_t\right\|_{L^1} \nnorm{\rho_0}_\gamma = \nnorm{\rho_0}_\gamma < \infty$ which is uniform in $t\in [0,T^\star)$, in contrast to the bound $\nnorm{G_t \star \rho_0} \le C t ^{-\gamma / 2}$ that we used in \cref{eq:prfreg:10}. This lets us prove \cref{eq:prfreg:iteration} with $\delta = 0$, giving $\rho\in C([0,T^\star),\mathcal{X}^\gamma)$.

    We now show that $\rho$ solves \cref{eq:classicalpde} in the classical sense on $(0,T^\star)$. Clearly $\rho(0,\cdot ) = \rho_0$. It suffices to show now that \cref{eq:classicalpde} is satisfied at a fixed $t\in (0,T^\star)$ and $x\in \R^d$. Because of the instant regularisation of $\rho$, we can assume without loss of generality that already $\rho_0\in \mathcal{X}^\gamma$ for all $\gamma \in [0,2+\gamma_f)$, so that $\sup_{0 \le s \le t} \nnorm{\rho_s}_\gamma < \infty$ by the above, and thus 
    \begin{equation}\label{eq:prfreg:30}
        \sup_{0 \le s \le t} \nnorm{\rho_s \nabla W \star\rho_s}_\gamma < \infty,\quad \gamma \in [0,2+\gamma_f)
    \end{equation} by \cref{lem:W1W2,lem:appalphabeta}. In particular $\rho_s \nabla W \star\rho_s \in C^{2}(\R^d)$ for all $s \in [0,t]$, so we can rewrite \cref{eq:duhamelpde} as \[
        \rho(t) = G_t \star \rho_0 + \int_0^t G_{t-s} \star f \dd s + \int_0^t G_{t-s} \star \nabla \cdot (\rho_s \nabla W\star \rho_s)\dd s.
    \] Recall that $G\colon [0,t] \times \R^d \to \R$ is smooth with globally bounded derivatives of any order, and $\partial_t G_t = \frac{1}{2}\Delta G_t$. To show that we can pull the time derivative into the integrals, we establish the following bounds.
    \begin{align*}
        \left\|(\partial_t G_{t-s}) \star f\right\|_\infty 
        = \frac{1}{2} \left\| (\Delta G_{t-s}) \star f\right\|_\infty
        &\le C \left\| G_{t-s} \star f\right\|_{\mathcal{W}^{2,\infty}}\\
        &\le C \left\|G_{t-s}\right\|_{\mathcal{W}^{2-\gamma_f,1}} \left\|f\right\|_{\mathcal{W}^{\gamma_f,\infty}}\\
        &\le C (t-s)^{-(2-\gamma_f) / 2} \nnorm{f}_{\gamma_f},
    \end{align*}
    which is integrable over $(0,t)$. Furthermore,
    \begin{align*}
        \left\|(\partial_t G_{t-s}) \star \nabla \cdot (\rho_s \nabla W \star \rho_s)\right\|_\infty
        &= \frac{1}{2} \left\|(\Delta G_{t-s}) \star \nabla \cdot (\rho_s \nabla W \star \rho_s)\right\|_\infty\\
        &\le C \left\|G_{t-s} \star \rho_s \nabla W \star \rho_s\right\|_{\mathcal{W}^{3,\infty}}\\
        &\le C \left\|G_{t-s}\right\|_{\mathcal{W}^{1,1}} \left\|\rho_s \nabla W \star \rho_s\right\|_{\mathcal{W}^{2,\infty}}\\
        &\le C (t-s) ^{- 1 / 2} \left( \sup_{0 \le s \le t}\nnorm{\rho_s}_{2}^2\right),
    \end{align*}
    where we recalled \cref{eq:prfreg:30,lem:W1W2} in the final step, which is also integrable over $(0,t)$. Hence, $\rho(\cdot ,x)$ is differentiable at $t$ and 
    \begin{align*}
    \partial_t \rho_t
    &= (\partial_t G_t) \star\rho_0 + f + \int_0^t (\partial_t G_{t-s}) \star f\dd s + \nabla \cdot (\rho_t\nabla W\star \rho_t) \\
    &\hspace{2cm} + \int_0^t (\partial_t G_{t-s}) \star \nabla \cdot (\rho_s \nabla W\star \rho_s)\dd s\\
    &= \frac{1}{2}\Delta \rho_t + \nabla \cdot (\rho_t\nabla W\star \rho_t) + f.
    \end{align*}
    Non-negativity will be proved in the following lemma.
\end{proof}

\begin{lemma}\label{lem:L1norm}
    For any $t \in [0,T^\star)$, $\rho_t$ is non-negative and $\left\|\rho_t\right\|_{L^1} = \left\|\rho_0\right\|_{L^1} +  t \left\|f\right\|_{L^1}$.
\end{lemma}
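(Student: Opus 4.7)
The plan is to establish non-negativity first by recasting \cref{eq:classicalpde} as a linear parabolic equation in $\rho$ with bounded coefficients, and then to obtain the $L^1$ identity by integrating the Duhamel formula \cref{eq:duhamelpde} term-by-term over $\R^d$.

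For the first part, expanding the divergence gives \[ \partial_t \rho = \tfrac{1}{2}\Delta \rho + b_t \cdot \nabla \rho + c_t \rho + f \] on $(0, T^\star)$, with $b_t \coloneqq \nabla W \star \rho_t$ and $c_t \coloneqq \nabla \cdot b_t = \nabla W \star \nabla \rho_t$; the latter identity follows by differentiation under the convolution, justified since $\nabla \rho_t \in L^1 \cap L^\infty$ (\cref{thm:regularity} at any $\gamma > 1$) and $\nabla W$ is locally integrable and bounded away from the origin by \cref{ass}(ii). By \cref{lem:W1W2}, $b_t$ and $c_t$ are in $L^\infty$ with norms locally bounded in $t$. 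Treating them as given coefficients, $v \coloneqq -\rho$ satisfies a linear parabolic equation with bounded coefficients, initial datum $v(0) \le 0$, and non-positive source $-f$. After the substitution $w \coloneqq e^{-\lambda t} v$ with $\lambda > \sup_{t \in [0,T]} \|c_t\|_\infty$, the zeroth-order coefficient becomes strictly negative, so a positive interior maximum of $w$ would force $\partial_t w \ge 0$, $\Delta w \le 0$, $\nabla w = 0$ to give a strictly negative right-hand side in the equation, a contradiction. Decay of $v$ (and hence $w$) at spatial infinity, which follows from $\rho_t \in \mathcal{W}^{1, \infty} \cap L^1$ being uniformly continuous and integrable, ensures that the supremum of $w$ on $[0, T] \times \R^d$ is attained, so $w \le 0$, i.e., $\rho_t \ge 0$ on $(0, T^\star)$. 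At $t = 0$ non-negativity is the hypothesis on $\rho_0$.

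For the $L^1$ identity, integrating \cref{eq:duhamelpde} over $\R^d$ and applying Fubini together with $\int G_s = 1$, $\int \nabla G_s = 0$ for $s > 0$, and $\int (F \star H) = (\int F)(\int H)$, the three terms on the right collapse to $\int \rho_0$, $t \int f$, and $0$ respectively. Fubini is justified by \[ \int_0^t \|\nabla G_{t-s}\|_{L^1} \|\rho_s \nabla W \star \rho_s\|_{L^1} \dd s \le C \int_0^t (t-s)^{-1/2} \nnorm{\rho_s}^2 \dd s < \infty, \] using \cref{eq:GsW,lem:W1W2} and $\rho \in C([0, T^\star), \mathcal{X})$. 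Combined with non-negativity, this yields $\|\rho_t\|_{L^1} = \int \rho_t = \|\rho_0\|_{L^1} + t \|f\|_{L^1}$.

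The main obstacle is the weak maximum principle on the unbounded domain $\R^d$; the cleanest resolution is the decay-at-infinity argument above, which depends crucially on \cref{thm:regularity} providing $\rho_t \in \mathcal{W}^{1, 1} \cap \mathcal{W}^{1, \infty}$ for $t > 0$. A probabilistic alternative would be a Feynman--Kac representation of $\rho$ along an SDE with drift $b$ (Lipschitz in $x$ by \cref{lem:W1W2} and $\rho_t \in \mathcal{X}^\gamma$ for $\gamma > 1$) and exponential weight involving $c$, which makes non-negativity immediate from positivity of $\rho_0$, $f$, and the exponential weight.
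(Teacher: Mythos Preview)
Your argument for the $L^1$ identity coincides with the paper's: integrate the Duhamel formula \cref{eq:duhamelpde} over $\R^d$ and use $\int G_s = 1$, $\int \nabla G_s = 0$.

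For non-negativity the approaches differ. The paper approximates $s\mapsto(-s)_+$ by smooth convex $j_\varepsilon$, differentiates $\int j_\varepsilon(\rho_t)$ in time, integrates by parts to discard the sign-definite contributions $-\int j_\varepsilon' f\ge 0$ and $\tfrac12\int j_\varepsilon''|\nabla\rho|^2\ge 0$, and then lets $\varepsilon\to 0$ using that $J_\varepsilon(s)=s\, j_\varepsilon''(s)\to 0$ pointwise with $|J_\varepsilon|\le 2$. Your route---freezing $b_t,c_t$ as known coefficients and applying the weak parabolic maximum principle after an exponential substitution---is more classical and arguably more transparent; the attainment of the supremum via decay at infinity (from $\rho_t\in\mathcal{W}^{1,\infty}\cap L^1$ for $t>0$) is exactly the right device on $\R^d$.

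There is, however, one gap. You take $\lambda>\sup_{t\in[0,T]}\|c_t\|_\infty$ with $c_t=\nabla W\star\nabla\rho_t$, but \cref{lem:W1W2} only gives $\|c_t\|_\infty\le C\nnorm{\nabla\rho_t}$, and for a general $\rho_0\in\mathcal{X}_+$ (not in $\mathcal{X}^1$) the Duhamel term $\nabla G_t\star\rho_0$ yields at best $\nnorm{\nabla\rho_t}\lesssim t^{-1/2}$ as $t\downarrow 0$. Thus the supremum over $[0,T]$ may well be infinite and no constant $\lambda$ exists. The repair is immediate: replace $e^{-\lambda t}$ by $e^{-\Lambda(t)}$ with $\Lambda(t)=\int_0^t(\|c_s\|_\infty+1)\,\dd s$, which is finite because $s^{-1/2}$ is integrable at $0$; the rest of your argument is unchanged. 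The same issue affects the Feynman--Kac alternative you sketch, where the exponential weight $\exp\int_0^t c_{t-s}(Y_s)\,\dd s$ is again controlled by $\Lambda(t)$.
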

\begin{proof}
    The idea is to show that $\int \rho(t,x)_- \dd x = 0$ for all $t \ge 0$. For that purpose, let $(j_\varepsilon)_{\varepsilon > 0}$ be a family of smooth and convex functions such that $j_\varepsilon(s) = (-s)\vee 0$ on $\R \setminus [-\varepsilon,0]$, and $0\le j_\varepsilon'' \le 2 / \varepsilon$ in $[-\varepsilon,0]$. Then for any $\varepsilon > 0$,
    \begin{align}\begin{split}\label{eq:jeps}
        \frac{\dd }{\dd t} \int j_\varepsilon(\rho(t,x)) \dd x
        &= \int_{\R^d} j_\varepsilon'(\rho(t,x)) \left( \frac{1}{2}\Delta \rho_t + \nabla \cdot (\rho_t \nabla W\star \rho_t) + f \right)(x) \dd x\\
        &\le - \int_{\R^d} j_\varepsilon''(\rho(t,x)) \nabla \rho_t(x)\cdot  \left( \frac{1}{2}\nabla \rho_t + \rho_t \nabla W\star \rho_t \right)(x) \dd x\\
        &\le - \int_{\R^d} J_\varepsilon(\rho(t,x)) \nabla \rho_t(x) \cdot (\nabla W\star \rho_t)(x) \dd x\\
        &\eqqcolon H_\varepsilon(t),
    \end{split}\end{align}
    where in the second step we used that $j_\varepsilon' \le 0$ and $f \ge 0$, and in the third step we used that $j_\varepsilon''(\rho) \left| \nabla \rho \right| ^2 \ge 0$ and put $J_\varepsilon(s) \coloneqq j_\varepsilon''(s) s$. To understand why this was helpful, formally replace $j_\varepsilon(s)$ by $j(s)\coloneqq (-s)\vee 0$, so that the left-hand side (LHS) turns into $\frac{\dd }{\dd t}\int \rho(t,x)_- \dd x$, and the RHS is zero because $j''(x) = \delta(x)$, so $J(\rho) = \delta(\rho)\rho = 0$.

    More precisely, \cref{eq:jeps} implies $\int j_\varepsilon(\rho(t,x))\dd x \le j_\varepsilon(\rho_0(x)) + \int_0^t H_\varepsilon(s)\dd s$ for all $\varepsilon > 0$ and $t\in [0,T^\star)$. The LHS converges by dominated convergence to $\int \rho(t,x)_- \dd x$, and the RHS converges by dominated convergence to zero. Indeed, $J_\varepsilon \to 0$ pointwise on $\R\setminus \left\{ 0 \right\} $ as $\varepsilon \to 0$, and $0 \le J_\varepsilon(s) \le 2$ for all $s\in \R$ and $\varepsilon > 0$ by assumption on $j_\varepsilon$. Hence, $\int \rho(t,x)_- \dd x = 0$ for all $t \in [0,T^\star)$, so by continuity of $\rho(t)$, it is non-negative for all $t\in [0,T^\star)$.

    For the second claim,
    \begin{align*}
        \int \rho(t,x) \dd x
        &= \int_{\R^d}\int_{\R^d} G_t(x-y) \rho_0(y) \dd y\dd x+ \int_0^t \int_{\R^d}\int_{\R^d} G_{s}(x-y) f(y)\dd y\dd x\dd s \\
        &\qquad\qquad + \int_0^t \int \int (\nabla G_{t-s})(x-y) (\rho_s\nabla W\star \rho_s)(y)\dd y\dd x\dd s\\
        &= \int \rho_0(x) \dd x +  t \left( \int_{\R^d}f(x)\dd x \right),
    \end{align*}
    where we used that $\int G_s = 1$ and $\int \nabla G_s = 0$ for all $s > 0$.
\end{proof}

\subsection{Maximum Principle and Global Boundedness}\label{sec:MPproof}

The main goal of this section is to prove \cref{thm:partial+,thm:MP,thm:Mproperty}. For clarity of presentation, we will postpone the proofs of some auxiliary lemmas until after we show \cref{thm:partial+,thm:MP,thm:Mproperty}.

The representation of $c_W$ used mainly in the proofs is $c_W = \eta_W - \int (\Delta W)_+$, and a crucial first step is to establish the following meaning of $\eta_W$.

\begin{lemma}\label{lem:etaW}
    There is $\xi_W \in \R^d$ such that for any $g\in \mathcal{X}^1\cap C^1(\R^d)$ (in particular any $g\in \mathcal{X}^{\gamma}$ with $\gamma > 1$), \[
        \nabla \cdot (\nabla W\star g) = (\Delta W)\star g -\eta_W g + \xi_W \cdot \nabla g.
        \] 
    If $W(x) = W(-x)$ for all $x\in \R^d \setminus \left\{ 0 \right\} $, then $\xi_W = 0$.
\end{lemma}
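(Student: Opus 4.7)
The plan is to integrate by parts, transferring the divergence off $g$ onto $W$ on the exterior of a vanishing ball around the singularity at the origin, and to identify $-\eta_W g$ and $\xi_W\cdot \nabla g$ with the zeroth and first order coefficients from the Taylor expansion of $g$ on the shrinking sphere.

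First, by \cref{ass}(ii) and the splitting $\nabla W = (\nabla W)\ind_{B(0,1)} + (\nabla W)\ind_{B(0,1)^c}\in L^1 + L^\infty$, Young's inequality (as in \cref{lem:W1W2}) gives $\nabla W\star g, \nabla W\star \nabla g \in L^\infty$, so I can differentiate under the convolution to obtain $\nabla \cdot (\nabla W \star g)(x) = \int_{\R^d} \nabla W(y) \cdot (\nabla g)(x-y) \diff y$. For $\varepsilon > 0$, I would split this at $|y| = \varepsilon$. On the exterior $W\in C^2$ with $\nabla W$ bounded, and Green's identity produces
\begin{align*}
\int_{|y|>\varepsilon} \nabla W\cdot (\nabla g)(x-y)\diff y
&= \int_{|y|>\varepsilon} (\Delta W)(y) g(x-y)\diff y \\
&\quad + \int_{\partial B(0,\varepsilon)} g(x-y)\, \nabla W(y)\cdot \hat n(y)\diff S(y),
\end{align*}
with $\hat n=y/|y|$; the $|y|\to \infty$ boundary disappears because $g \in C^1 \cap \mathcal{W}^{1,1} \cap \mathcal{W}^{1,\infty}$ is uniformly continuous and integrable, hence decays at infinity.

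Next, Taylor-expand $g(x-y) = g(x) - y\cdot \nabla g(x) + r(x,y)$ with $|r(x,y)|\le |y|\,\omega_x(|y|)$ and $\omega_x(\delta)\coloneqq \sup_{|z|\le \delta}|\nabla g(x-z)-\nabla g(x)|\to 0$. The leading boundary piece $g(x)\cdot c_d\varepsilon^{d-1}\langle \nabla W\rangle(\varepsilon)$ tends to $-\eta_W g(x)$ by \cref{eq:alphaWdef} together with the identity $c_d r^{d-1}\langle \nabla W_N\rangle(r)\equiv -1$. Combined with the Taylor-expanded interior integral $\int_{|y|<\varepsilon}\nabla W\cdot \nabla g(x-y)\diff y = \nabla g(x)\cdot \int_{|y|<\varepsilon}\nabla W\diff y + o(1)$, the first-order boundary piece gives a net $\nabla g(x)\cdot B(\varepsilon)$ contribution, where
\begin{equation*}
B(\varepsilon) \coloneqq \int_{|y|<\varepsilon}\nabla W(y)\diff y - \int_{\partial B(0,\varepsilon)} y\,(\nabla W(y)\cdot \hat n(y))\diff S(y).
\end{equation*}
To establish $\xi_W \coloneqq \lim_{\varepsilon \downarrow 0} B(\varepsilon) \in \R^d$, apply the divergence theorem to $y_i \nabla W$ on the annulus $\{\delta < |y| < \varepsilon'\}$ and let $\delta \downarrow 0$: using the smooth identity $\nabla \cdot (y_i \nabla W) = \partial_i W + y_i \Delta W$ together with local integrability of $\nabla W$ and $\Delta W$, each coordinate of $\int_{\partial B(0,\delta)} y (\nabla W \cdot \hat n) \diff S$ converges, while $\int_{|y|<\delta}\nabla W\diff y \to 0$.

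The two small-$\varepsilon$ remainders are of the form $\omega_x(\varepsilon)$ times $\varepsilon\int_{\partial B(0,\varepsilon)}|\nabla W|\diff S$ or $\int_{|y|<\varepsilon}|\nabla W|\diff y$, both vanishing as $\varepsilon \downarrow 0$ (the first factor by continuity of $\nabla g$; the second by local integrability of $\nabla W$ and the Newtonian-scale sphere estimate implicit in the finiteness of $\eta_W$). Collecting and passing to the limit then yields the identity. For the symmetry claim, $W(-y)=W(y)$ implies $\nabla W$ is odd; the integrand $\nabla W$ in the first term of $B(\varepsilon)$ is then odd, and the integrand $y(\nabla W(y)\cdot \hat n(y))$ of the second is odd $\times$ even $=$ odd, so both integrals over the symmetric domains vanish, giving $\xi_W=0$. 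The main technical obstacle is the simultaneous treatment of the two terms in $B(\varepsilon)$: neither alone converges in general, but their combination—controlled via the divergence theorem on annuli—makes $\xi_W$ a well-defined vector independent of $\varepsilon$.
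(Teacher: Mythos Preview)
Your route is genuinely different from the paper's. The paper does \emph{not} Taylor-expand on the sphere. Instead it first proves an auxiliary version (\cref{lem:etaW2}): writing $g = g\varphi_\varepsilon + g\psi_\varepsilon$ with a smooth cutoff, it shows that $\mathcal F[g]\coloneqq \nabla\cdot(\nabla W\star g)(0)-(\Delta W\star g)(0)$ is a bounded linear functional on $\mathcal X^1$ that depends only on the germ of $g$ at $0$ (via the localisation \cref{lem:rhotilde}), hence must be of the form $-\eta_W' g(0)+\xi_W\cdot\nabla g(0)$ for \emph{some} constants. The identification $\eta_W'=\eta_W$ is then carried out separately in the proof of \cref{lem:alphaW}, by integrating the formula against an approximate identity over balls. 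Your approach collapses these two steps into one explicit computation, which is conceptually cleaner, and your annulus argument for the existence of $\xi_W$ is nice (and, incidentally, the same argument with $\nabla W$ in place of $y_i\nabla W$ would give the existence of $\eta_W$ directly---you should say this, since in the paper's logic the existence of the limit in \cref{eq:alphaWdef} is established \emph{using} the present lemma, so citing it is circular).

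There is, however, a real gap in your remainder control. You claim that the sphere remainder is $\omega_x(\varepsilon)\cdot\varepsilon\int_{\partial B(0,\varepsilon)}|\nabla W|\diff S\to 0$ by ``the Newtonian-scale sphere estimate implicit in the finiteness of $\eta_W$''. But finiteness of $\eta_W$ only controls the \emph{signed} flux $\int_{\partial B(0,\varepsilon)}\nabla W\cdot\hat n\diff S$, not $\int_{\partial B(0,\varepsilon)}|\nabla W\cdot\hat n|\diff S$; under \cref{ass}(ii) alone there is no reason for $\varepsilon\int_{\partial B(0,\varepsilon)}|\nabla W|\diff S$ to stay bounded for every $\varepsilon$. (Similarly, ``$g$ decays at infinity'' is not enough to kill the outer boundary term, since the surface area grows like $R^{d-1}$.) The fix is easy but different from what you wrote: since $h(r)\coloneqq\int_{\partial B(0,r)}|\nabla W|\diff S$ is in $L^1(0,1)$ by local integrability of $\nabla W$, one has $\liminf_{r\to 0} r\,h(r)=0$; pass to a subsequence $\varepsilon_n\downarrow 0$ along which $\varepsilon_n h(\varepsilon_n)\to 0$, and observe that \emph{every other term} in your identity converges along the full limit, so the remainder (which therefore also converges) must tend to $0$. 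The same subsequence trick handles the boundary at infinity using $g\in L^1$. Finally, your closing remark that ``neither term of $B(\varepsilon)$ alone converges'' is inaccurate: $\int_{|y|<\varepsilon}\nabla W\to 0$ by local integrability, and your own annulus computation shows $\int_{\partial B(0,\varepsilon)} y(\nabla W\cdot\hat n)\diff S$ converges, so both pieces converge separately.
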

That is, $\Delta W$ can be viewed in a distributional sense as a sum of the function $x \mapsto \Delta W(x)$ on $\R^d\setminus \left\{ 0 \right\} $, and the distribution $-\eta_W \delta_0 - \xi_W \cdot \nabla \delta_0$, where $\delta_0$ denotes the Dirac mass at the origin. The significance of this lies in the following application: If $g$ is a non-negative function with a global maximum at some $x_0$, then $\nabla g(x_0) = 0$ and $g(x_0) = \left\|g\right\|_\infty$, so
\begin{align*}
    \nabla \cdot (\nabla W \star g)(x_0) 
    &= (\Delta W \star g)(x_0) - \eta_W g(x_0) \\
    &\le \left\|g\right\|_\infty \int (\Delta W)_+  - \eta_W g(x_0) \\
    &= -c_W g(x_0).
\end{align*}
This already proves the first assertion of the following lemma. Denote by $C_{c+}^\infty(\R^d)$ the set of infinitely differentiable, compactly supported, non-negative functions on $\R^d$.

\begin{lemma}\label{lem:divbound}
    If $g \in \mathcal{X}^1\cap C^1(\R^d)$ is non-negative and has a global maximum at $x_0\in \R^d$, then 
    \begin{equation}\label{eq:divbound}
        \nabla \cdot (\nabla W \star g)(x_0) \le - c_W g(x_0).
    \end{equation}
    For any $c > c_W$ and $x_0\in \R^d$, there exists $g \in C_{c+}^\infty(\R^d)$ such that $g$ has a global maximum of any given height at $x_0$, $\Delta g(x_0) = 0$, and \[
    \nabla \cdot (\nabla W \star g)(x_0) > -c g(x_0).
    \]
\end{lemma}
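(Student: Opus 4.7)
I would prove the first assertion~\eqref{eq:divbound} exactly as in the calculation displayed immediately before the statement: since $g \in C^1$ attains its global maximum at $x_0$, $\nabla g(x_0) = 0$, so \cref{lem:etaW} gives $\nabla\cdot(\nabla W\star g)(x_0) = (\Delta W\star g)(x_0) - \eta_W g(x_0)$, and the pointwise bound $\Delta W(x_0 - y)\, g(y) \le \Delta W(x_0 - y)_+\, g(x_0)$ (which uses $0 \le g(y) \le g(x_0)$) yields $(\Delta W\star g)(x_0) \le g(x_0)\int(\Delta W)_+$, hence~\eqref{eq:divbound}.

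For the sharpness assertion my plan is to translate to $x_0 = 0$ and rescale so that the prescribed maximum height is $g(0) = 1$ (multiplication by $h$ at the end gives arbitrary heights). Any $g \in C_{c+}^\infty(\R^d)$ with $g(0) = 1 = \max g$ and $\Delta g(0) = 0$ automatically has $\nabla g(0) = 0$, so \cref{lem:etaW} reduces the required inequality $\nabla\cdot(\nabla W\star g)(0) > -c$ to
\begin{equation*}
    \int g(y)\,\Delta W(-y)\,\diff y > \eta_W - c.
\end{equation*}
Because $c > c_W = \eta_W - \int(\Delta W)_+$, the right-hand side is strictly less than $\int(\Delta W)_+$, which is the supremum of $\int g(y)\Delta W(-y)\diff y$ over $0\le g \le 1$, formally attained by $g = \ind_A$ with $A \coloneqq \{y \in \R^d\setminus\{0\} : \Delta W(-y) > 0\}$. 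It therefore suffices to build an admissible $g$ whose integral is sufficiently close to this supremum.

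I would construct $g$ as a mollified cutoff. Continuity of $\Delta W$ on $\R^d\setminus\{0\}$ makes $A$ open. Fix $\varepsilon \in (0, c - c_W)$. By monotone convergence, using $(\Delta W)_+ \in L^1$ from \cref{lem:alphaW}, and by local integrability of $\Delta W$, I pick $R$ large and $r$ small enough that
\begin{equation*}
    \int_{A\cap B(0,R)\setminus B(0,r)} \Delta W(-y)\,\diff y > \int(\Delta W)_+ - \tfrac{\varepsilon}{3}, \qquad \int_{B(0,r)} |\Delta W(-y)|\,\diff y < \tfrac{\varepsilon}{3}.
\end{equation*}
Let $S \coloneqq B(0, r/2) \cup \bigl(A \cap B(0,R) \setminus B(0,r)\bigr)$ and take $g \in C_{c+}^\infty(\R^d)$ with $0 \le g \le 1$, $g \equiv 1$ on a $\tau$-inner-neighborhood of $S$ that contains $B(0, r/4)$, and $g \equiv 0$ outside a $\tau$-outer-neighborhood of $S$. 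Then $g(0) = 1 = \max g$, and $\Delta g(0) = 0$ because $g$ is constant near $0$. Since $\Delta W$ is bounded on $B(0, R+1) \setminus B(0, r/4)$, for $\tau$ sufficiently small the boundary-layer error is below $\varepsilon/3$; combining the three estimates,
\begin{equation*}
    \int g(y)\,\Delta W(-y)\,\diff y > \int(\Delta W)_+ - \varepsilon > \eta_W - c,
\end{equation*}
as required.

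The main obstacle is the joint constraint on $g$: it must be smooth, compactly supported, non-negative, attain a flat maximum at $x_0$, and simultaneously approximate $\ind_A$ tightly in the weighted sense above. Three features of the setup make this feasible: (i) \cref{lem:etaW} cleanly extracts the Dirac-type singularity of $\Delta W$ at the origin into the term $-\eta_W g(x_0)$, so that only the locally integrable classical part of $\Delta W$ enters the convolution to be estimated; (ii) the $C^2$-regularity of $W$ off the origin makes $A$ open and $\Delta W$ bounded on any compact set missing the origin, so that mollifying $\ind_A$ on an annulus costs only a controllable boundary-layer error; and (iii) local integrability of $\Delta W$ near $0$ ensures that the flat plateau of $g$ around the origin contributes only a small error to the lower bound.
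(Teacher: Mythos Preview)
Your argument is correct and follows essentially the same strategy as the paper: approximate $\ind_{\{\Delta W>0\}}$ by a smooth cutoff and attach a flat bump at $x_0$ to produce the maximum with $\Delta g(x_0)=0$. One step needs a touch more care: you justify the boundary-layer estimate by boundedness of $\Delta W$ on the annulus, but the $\tau$-neighbourhood of $\partial A$ need not have small Lebesgue measure (nothing prevents $|\partial A|>0$). The step is nevertheless true because $\Delta W$ vanishes on $\partial A$ by continuity, so $(g-\ind_S)\,\Delta W(-\cdot)\to 0$ pointwise everywhere and dominated convergence applies. The paper sidesteps this issue altogether by mollifying the \emph{shrunken} set $U_\varepsilon=\{x:d(x,\{\Delta W\le 0\})>\varepsilon\}$, which keeps the smooth approximant $h_\varepsilon$ supported inside $\{\Delta W>0\}$; then $\int h_\varepsilon\,\Delta W = \int h_\varepsilon\,(\Delta W)_+ \to \int(\Delta W)_+$ by monotone convergence with no boundary-layer analysis needed.
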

Note that if $W = W_N$ is the Newtonian potential, then $\nabla \cdot (\nabla W \star g) = -g$ which trivially implies the assertion of \cref{lem:divbound}. Now if $t > 0$ and $\rho_t$ has a global maximum at some $x_0\in \R^d$, then $\Delta \rho_t(x_0) \le 0$, $\nabla \rho_t(x_0) = 0$, and $\rho_t(x_0) = \left\|\rho_t\right\|_\infty$, so evaluating the PDE \cref{eq:classicalpde} at $(t,x_0)$ and using \cref{lem:etaW} gives
\begin{align}\begin{split}\label{eq:dtmaxbound}
    \partial_t \rho_t(x_0) 
     &\le \rho_t(x_0) \nabla \cdot (\nabla W \star \rho_t)(x_0) + f(x_0)\\
    &\le \left\|f\right\|_\infty - c_W \left\|\rho_t\right\|_\infty^2.
\end{split}\end{align}
This is almost the differential inequality in \cref{thm:partial+}, where the LHS is $\partial^+_t\left\|\rho_t\right\|_\infty$, and the following two lemmas close that gap.

\begin{lemma}\label{lem:partial+}
    Suppose $T > 0$ and $g\in C([0,T],\mathcal{X}_+)$ such that $g(t,\cdot )$ is Lipschitz continuous for all $t \in (0,T]$, and $g$ is differentiable in time on $(0,T]$ with $\partial_t g\colon (0,T] \times \R^d \to \R $ jointly continuous. Suppose further that there is a continuous function $C\colon [0,T] \to [0,\infty)$ such that for all $t \in (0,T]$, if $g_t$ has a global maximum at $x$, \[
        \partial_t g(t,x) \le C(t).
    \] Then for all $t \in [0,T)$, \[
    \partial_t^+ \left\|g_t\right\|_\infty \le C(t).
    \] 
\end{lemma}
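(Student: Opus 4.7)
The plan is a Danskin-type envelope argument. Writing $M(t) \coloneqq \left\|g_t\right\|_\infty$, I first show that $\partial_t^+ M(t) \le C(t)$ for $t \in (0, T)$, and then extend to $t = 0$ by a continuity/integration step.

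For $t \in (0, T]$, the Lipschitz continuity of $g_t$ together with $g_t \in L^1(\R^d)$ forces $g_t(x) \to 0$ as $|x| \to \infty$, so the supremum $M(t)$ is attained at some point of $\R^d$. For $h > 0$ small, let $x_h \in \R^d$ satisfy $g_{t+h}(x_h) = M(t+h)$; using $g_t(x_h) \le M(t)$ and the fundamental theorem of calculus, I obtain the envelope inequality
\begin{equation*}
    M(t+h) - M(t) \le g_{t+h}(x_h) - g_t(x_h) = \int_t^{t+h} \partial_t g(s, x_h)\,\dd s.
\end{equation*}

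The crux is the precompactness of $\{x_h\}_{h\downarrow 0}$ and the identification of its limit points as maximizers of $g_t$. Assuming $M(t) > 0$, the time-continuity $\left\|g_{t+h} - g_t\right\|_\infty \to 0$ gives $g_t(x_h) \ge M(t+h) - \left\|g_{t+h} - g_t\right\|_\infty \to M(t) > 0$, and the decay of $g_t$ at infinity forces $\{x_h\}$ to be bounded; any subsequential limit $x^*$ must satisfy $g_t(x^*) = M(t)$ and hence maximize $g_t$. To conclude, suppose for contradiction $\partial_t^+ M(t) > C(t)$, so that $(M(t+h_n) - M(t))/h_n > C(t) + \varepsilon$ for some $\varepsilon > 0$ and $h_n \downarrow 0$. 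Extract a subsequence along which $x_{h_n} \to x^*$; then joint continuity of $\partial_t g$ on a compact set of the form $[t, t+h_1] \times \overline{B(0, R)}$ containing all $(s, x_{h_n})$ and $(t, x^*)$ yields
\begin{equation*}
    C(t) + \varepsilon \;\le\; \limsup_n \sup_{s \in [t, t+h_n]} \partial_t g(s, x_{h_n}) \;=\; \partial_t g(t, x^*),
\end{equation*}
contradicting the hypothesis $\partial_t g(t, x^*) \le C(t)$ at the maximizer $x^*$ of $g_t$.

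For $t = 0$, having $\partial_s^+ M(s) \le C(s)$ on $(0, T)$, $M \in C([0, T])$ (from $g \in C([0, T], L^\infty)$), and $C$ continuous, a standard comparison lemma gives $M(h) - M(0) \le \int_0^h C(s)\,\dd s$, whence $\partial_t^+ M(0) \le C(0)$. The degenerate case $M(t) = 0$ inside $(0, T)$, where $g_t \equiv 0$ and every point maximizes $g_t$, is handled by a short separate argument, since the hypothesis then reads $\partial_t g(t, \cdot) \le C(t)$ globally. The main obstacle is the precompactness step on the non-compact domain $\R^d$: this is precisely what forces the joint use of Lipschitz continuity plus $L^1$-integrability of $g_t$ (to secure decay at infinity) together with $L^\infty$-continuity of $g$ in time (to keep $g_t(x_h)$ bounded away from zero). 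Everything else reduces to standard envelope-style calculus and joint continuity of $\partial_t g$.
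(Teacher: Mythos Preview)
Your envelope argument is correct at non-degenerate points and uses the same two key ingredients as the paper: decay at infinity (Lipschitz plus integrability) to secure precompactness of maximizers, and joint continuity of $\partial_t g$ to pass to the limit. The organization differs: you establish the pointwise Dini bound $\partial_t^+ M(t)\le C(t)$ directly and then integrate via a comparison lemma, whereas the paper proves the integral inequality
\[
M(t)\le M(t_0)+\int_{t_0}^t\bigl(C(s)+\varepsilon\bigr)\,\dd s+\varepsilon'
\]
by a first-crossing-time contradiction and only then differentiates. Your route is arguably more transparent when $M(t)>0$.

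There is, however, a gap in the degenerate case $M(t)=0$. You claim a ``short separate argument'' suffices because then $\partial_t g(t,\cdot)\le C(t)$ holds globally, but this bound at the single time $t$ does not control $\frac{1}{h}\int_t^{t+h}\partial_t g(s,x_h)\,\dd s$ when the maximizers $x_h$ of $g_{t+h}$ may escape to infinity as $h\downarrow 0$ --- and your precompactness step genuinely fails here, since $g_t\equiv 0$ gives no lower bound on $g_t(x_h)$. Joint continuity of $\partial_t g$ on $(0,T]\times\R^d$ is not uniform in $x$, so you cannot conclude $\sup_{s\in[t,t+h]}\partial_t g(s,x_h)\to\partial_t g(t,x^*)$ without a compact set containing the $x_h$. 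The paper's extra slack $\varepsilon'>0$ is exactly what handles this: it forces the first crossing time $t_1$ to satisfy $t_1>0$ and $g(t_n,x_n)>\varepsilon'$, so that $g(t_1,x_n)\ge\varepsilon'/2$ eventually and the Lipschitz-plus-$L^1$ argument recovers boundedness of $(x_n)$ regardless of whether $M(t_1)=0$. To repair your proof, either incorporate this slack into your comparison step, or note that in the intended application ($g=\rho$) one has $\|\rho_t\|_{L^1}>0$ for $t>0$ whenever $f\not\equiv 0$ or $\rho_0\not\equiv 0$, so the degenerate case does not arise.
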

\begin{lemma}\label{lem:rhoreg}
    For any $\rho_0\in \mathcal{X}_+$ and $T \in (0,T^\star)$, $\rho\colon [0,T]\times \R^d \to [0,\infty)$ satisfies the assumptions of \cref{lem:partial+}.
\end{lemma}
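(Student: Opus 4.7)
The plan is to verify the four assumptions of \cref{lem:partial+} in turn, each of which follows from \cref{thm:regularity} together with \cref{lem:etaW,lem:divbound}. The first two assumptions are read off directly from the regularity theorem, the bound at maxima is the calculation already sketched in \cref{eq:dtmaxbound}, and the only step requiring care is joint continuity of $\partial_t \rho$.

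The first assumption, $\rho \in C([0,T],\mathcal{X}_+)$, is given by \cref{thm:regularity}. For the second, that same theorem gives $\rho_t\in C^{2,\gamma_f}(\R^d)$ with locally bounded $C^{2,\gamma_f}$-norm on $(0,T^\star)$; in particular $\nabla \rho_t$ is bounded, so $\rho_t$ is Lipschitz on $\R^d$ for each $t\in (0,T]$.

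For the third assumption I would expand the PDE, using that $\rho$ is a classical solution on $(0,T^\star)$:
\[
    \partial_t \rho(t,x) = \tfrac{1}{2}\Delta \rho_t(x) + \nabla \rho_t(x)\cdot (\nabla W\star\rho_t)(x) + \rho_t(x)\,\nabla\cdot(\nabla W\star\rho_t)(x) + f(x).
\]
Choosing $\gamma \in (2, 2+\gamma_f)$, \cref{thm:regularity} gives $\rho \in C((0,T^\star),\mathcal{X}^\gamma)$, which embeds into $C((0,T^\star),C^{2,\gamma-2}(\R^d))$; hence $\rho$, $\nabla \rho$, and $\Delta \rho$ are jointly continuous on $(0,T^\star)\times \R^d$. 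By \cref{lem:W1W2}, $t\mapsto \nabla W\star\rho_t$ is continuous in $\mathcal{W}^{\gamma,\infty}$, so $\nabla W\star \rho_t$ and its spatial derivatives are jointly continuous in $(t,x)$. Rewriting $\nabla\cdot(\nabla W\star\rho_t) = \Delta W\star\rho_t - \eta_W\rho_t + \xi_W\cdot\nabla\rho_t$ via \cref{lem:etaW} shows the divergence term is jointly continuous too. Since $f\in \mathcal{X}^{\gamma_f}$ embeds into $C^{0,\gamma_f}(\R^d)$, combining these yields joint continuity of $\partial_t \rho$ on $(0,T]\times \R^d$.

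For the fourth assumption, let $t \in (0,T]$ and suppose $\rho_t$ attains a global maximum at $x_0$. Then $\nabla \rho_t(x_0) = 0$ and $\Delta \rho_t(x_0) \le 0$, so by the product rule
\[
    \nabla\cdot(\rho_t\nabla W\star\rho_t)(x_0) = \rho_t(x_0)\,\nabla\cdot(\nabla W\star\rho_t)(x_0) \le - c_W \rho_t(x_0)^2,
\]
using \cref{lem:divbound} (applicable since $\rho_t \ge 0$ and $\rho_t \in \mathcal{X}^1 \cap C^1(\R^d)$ via the embedding from the previous step) and $\rho_t(x_0) = \|\rho_t\|_\infty$. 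Combining with $\Delta \rho_t(x_0)\le 0$ and $f(x_0)\le \|f\|_\infty$ yields $\partial_t\rho(t,x_0) \le \|f\|_\infty - c_W \|\rho_t\|_\infty^2$. One then takes $C(t) := \|f\|_\infty - c_W\|\rho_t\|_\infty^2$, which is continuous by continuity of $t \mapsto \|\rho_t\|_\infty$ inherited from $\rho \in C([0,T],L^\infty)$. The main (modest) obstacle is the third step: one has to track that each convolution involving $\nabla W$ and (via \cref{lem:etaW}) $\Delta W$ inherits enough regularity from $\rho$, but \cref{ass}(ii) combined with the splitting $\nabla W = \nabla W \ind_{B(0,1)} + \nabla W \ind_{\R^d\setminus B(0,1)} \in L^1 + L^\infty$ underlying \cref{lem:W1W2} makes this routine.
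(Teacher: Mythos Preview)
Your proposal is correct and follows essentially the same route as the paper: use \cref{thm:regularity} to get $\rho\in C([0,T],\mathcal{X}_+)\cap C((0,T],\mathcal{X}^\gamma)$ for $\gamma\in(2,2+\gamma_f)$, read off Lipschitz continuity, and then check joint continuity of $\partial_t\rho$ by expressing it through the PDE and controlling each term via \cref{lem:W1W2}, \cref{lem:etaW}, and the bound \cref{eq:LWconv} for $\Delta W\star\rho_t$. The only organisational difference is that you also verify the bound at maxima here with $C(t)=\left\|f\right\|_\infty-c_W\left\|\rho_t\right\|_\infty^2$, whereas the paper records that computation separately as \cref{eq:dtmaxbound} and lets \cref{lem:rhoreg} handle only the regularity hypotheses; one small point to watch is that \cref{lem:partial+} as stated asks for $C(t)\ge 0$, so you may want to replace your $C(t)$ by its positive part (the proof of \cref{lem:partial+} does not actually use non-negativity, so this is cosmetic).
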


We can now prove \cref{thm:partial+}.

\begin{proof}[Proof of \cref{thm:partial+}] 
    The differential inequality \cref{eq:partial+} follows from \cref{eq:dtmaxbound,lem:partial+,lem:rhoreg}, so it remains to prove sharpness. Let $c > c_W$ and $h > 0$, put $c' \coloneqq (c+c_W) / 2 > c_W$, and choose $x_0\in \R^d$ with $f(x_0) \ge \left\|f\right\|_\infty - (c-c_W) h^2 / 2$. Then by \cref{lem:divbound}, there is $\rho_0\in C_{c+}^\infty(\R^d)$ such that $\rho_0$ has a global maximum of height $h$ at $x_0$, $\Delta \rho_0(x_0) = 0$, $\rho_0(x_0) = \left\|\rho_0\right\|_\infty  = h$, and $\nabla \cdot (\nabla W \star \rho_0)(x_0) > -c'\rho_0(x_0) = -c' h$. Then, since $\nabla \rho_0(x_0) = 0$, 
    \begin{align*}
        \partial_t \rho(t,x_0)\Big\vert_{t=0} 
        &= \rho_0(x_0) \nabla \cdot (\nabla W\star \rho_0)(x_0) + f(x_0) \\
        &> -c'h^2 + \left\|f\right\|_\infty - (c-c_W)h^2 / 2 \\
        &= \left\|f\right\|_\infty - c \left\|\rho_0\right\|_\infty^2.
    \end{align*} 
    In particular, 
    \begin{align*}
    \partial_t^+ \left\|\rho_t\right\|_\infty \Big\vert_{t = 0} 
    = \varlimsup_{t \downarrow 0} \frac{\left\|\rho_t\right\|_\infty - \left\|\rho_0\right\|_\infty}{t} 
    \ge \varlimsup_{t\downarrow 0} \frac{\rho(t,x_0) - \rho_0(x_0)}{t} 
    &= \partial_t \rho(t,x_0)\Big\vert_{t = 0} \\
    &> \left\|f\right\|_\infty - c \left\|\rho_0\right\|_\infty^2.
    \end{align*}
\end{proof}

As we mentioned in \cref{sec:results}, the differential inequality \cref{eq:partial+} can be turned into an explicit upper bound, recall also \cref{fig:tanhbound} for an illustration.

\begin{corollary}\label{cor:partial+}
   If $c_W > 0$ and $\rho_0\in \mathcal{X}_+$, then $T^\star = \infty$ and for all $t\ge 0$,
    \begin{equation}\label{eq:tanhbound}
       \left\|\rho_t\right\|_\infty \le
       M \begin{cases}
           \coth(Mc_W(t+t_0)) ,& \left\|\rho_0\right\|_\infty > M,\\
           \tanh(Mc_W(t+t_0)) ,& \left\|\rho_0\right\|_\infty < M,\\
           1 ,& \left\|\rho_0\right\|_\infty = M,
       \end{cases}
   \end{equation}
   where $M = \sqrt{\left\|f\right\|_\infty / c_W} $, and $t_0\in \R$ is such that the right-hand side at $t = 0$ is $\left\|\rho_0\right\|_\infty$.
\end{corollary}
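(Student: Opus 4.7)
The plan is to bootstrap the differential inequality of Theorem \ref{thm:partial+} into an explicit upper bound by comparison with a scalar autonomous ODE, and then deduce global existence from the blow-up criterion of Theorem \ref{thm:wellposed}.

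First, Theorem \ref{thm:partial+} gives $\partial_t^+ y(t) \le F(y(t))$ on $[0,T^\star)$ with $y(t) \coloneqq \left\|\rho_t\right\|_\infty$ and $F(v) \coloneqq c_W(M^2 - v^2)$, and $y$ is continuous because $\rho \in C([0,T^\star),\mathcal{X})$. I would then verify by direct differentiation that the three candidate right-hand sides of \eqref{eq:tanhbound} are exactly the solutions of $u' = F(u)$ for the relevant initial condition: for instance $\frac{d}{dt}[M\coth(Mc_W(t+t_0))] = -M^2 c_W \operatorname{csch}^2(Mc_W(t+t_0)) = c_W M^2(1 - \coth^2) = F(u)$, and analogously for $\tanh$ (and trivially for the constant $M$). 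The shift $t_0$ is uniquely determined by $u(0) = y(0)$, strictly positive in the $\coth$ case and non-negative in the $\tanh$ case.

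Next, I would establish the comparison $y(t) \le u(t)$ on $[0,T^\star)$ via a standard Dini-derivative argument. For each $\varepsilon > 0$, let $u_\varepsilon$ solve the perturbed problem $u_\varepsilon' = F(u_\varepsilon) + \varepsilon$ with $u_\varepsilon(0) = y(0)$; continuous dependence of ODE solutions on parameters gives $u_\varepsilon \to u$ uniformly on compact subsets of $[0,\infty)$ as $\varepsilon \downarrow 0$. At $t = 0$, $u_\varepsilon'(0) = F(y(0)) + \varepsilon > \partial_t^+ y(0)$, so $u_\varepsilon > y$ on a right-neighbourhood of $0$. At any putative first crossing time $t_\star > 0$, continuity of $F$ would give $\partial_t^+ y(t_\star) \le F(y(t_\star)) = F(u_\varepsilon(t_\star)) = u_\varepsilon'(t_\star) - \varepsilon < u_\varepsilon'(t_\star)$, contradicting the fact that $u_\varepsilon - y$ touches zero from above at $t_\star$. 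Hence $y < u_\varepsilon$ on $(0,T^\star)$, and sending $\varepsilon \downarrow 0$ yields \eqref{eq:tanhbound}.

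Finally, inspection of the explicit formulae shows $u$ is bounded by $\max(y(0), M)$ on all of $[0,\infty)$: in the $\coth$ case $u$ is strictly decreasing to $M$, in the other two cases $u \le M$. Thus $y$ is bounded on $[0,T^\star)$, but by Theorem \ref{thm:wellposed} the only way $T^\star$ can be finite is if $\left\|\rho_t\right\|_\infty \to \infty$ as $t \uparrow T^\star$; hence $T^\star = \infty$. The main obstacle is the comparison step, because the upper Dini derivative $\partial_t^+$ does not satisfy the classical ODE comparison theorem directly; the strict-perturbation trick above is the standard workaround, and it carries through cleanly here because $F$ is globally continuous on $\mathbb{R}$.
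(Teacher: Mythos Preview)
Your proposal is correct and follows the same approach as the paper: identify the explicit ODE solution, compare via the differential inequality from Theorem~\ref{thm:partial+}, and conclude $T^\star = \infty$ from the blow-up criterion in Theorem~\ref{thm:wellposed}. The paper's proof is considerably terser---it simply asserts that \eqref{eq:partial+} yields $\left\|\rho_t\right\|_\infty \le G(t)$ for the ODE solution $G$---whereas you spell out the Dini-derivative comparison via the $\varepsilon$-perturbation trick; your extra care here is warranted and fills in what the paper leaves implicit.
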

\begin{proof}
    Suppose $c_W > 0$ and let $M\coloneqq \sqrt{\left\|f\right\|_\infty / c_W} $. Then the unique solution to the ODE \[
    \begin{cases}
        G'(t) = \left\|f\right\|_\infty - c_W G(t)^2 ,\quad t \ge 0,\\
        G(0) = \left\|\rho_0\right\|_\infty,
    \end{cases}
    \] is given by \[
    G(t) = M \begin{cases}
        \coth \left( c_W M(t+t_0) \right) ,& G(0) > M,\\
        \tanh \left( c_W M(t+t_0) \right) ,& G(0) < M,\\
        1 ,& G(0) = M,
    \end{cases}
\] where $t_0\in \R$ is such that $G(0) = \left\|\rho_0\right\|_\infty$. Then \cref{eq:partial+} implies $\partial^+_t \left\|\rho_t\right\|_\infty \le G'(t)$ for all $t\in [0,T^\star)$ and thus $\left\|\rho_t\right\|_\infty \le G(t)$ for all $t\in [0,T^\star)$. In particular, if $T^\star$ were finite then $\left\|\rho_t\right\|_\infty \le G(t) \to G(T^\star) < \infty$ as $t \uparrow T^\star$, contradicting \cref{thm:wellposed}.
\end{proof}

This also proves \cref{prop:globalexistence}.
The positive assertions of \cref{thm:MP,thm:Mproperty} are now straightforward consequences of \cref{cor:partial+}, and the sharpness follows from \cref{thm:partial+}.

\begin{proof}[Proof of \cref{thm:MP,thm:Mproperty}]
    Suppose that $c_W > 0$, put $M \coloneqq  \sqrt{\left\|f\right\|_\infty / c_W} $, and let $\rho_0\in \mathcal{X}_+$. If $\left\|\rho_0\right\|_\infty = M$, then \cref{eq:tanhbound} implies $\left\|\rho_t\right\|_\infty \le M = \left\|\rho_0\right\|_\infty$ for all $t\in [0,T^\star)$. If $\left\|\rho_0\right\|_\infty < M$, then \cref{eq:tanhbound} implies \[
        \left\|\rho_t\right\|_\infty \le M \tanh(c_W M(t+t_0)) < M,
    \] for all $t\in [0,T^\star)$. If $\left\|\rho_0\right\|_\infty > M$, then \[
        \left\|\rho_t\right\|_\infty \le M \coth(c_W(t+t_0)) \eqqcolon G(t),
    \] for all $t\in [0,T^\star)$. Since $G(0) > M$, we must have $t_0 > 0$. Thus, $G$ is strictly decreasing on $[0,\infty)$, so \[
        \left\|\rho_t\right\|_\infty \le G(t) < G(0) = \left\|\rho_0\right\|_\infty,\quad t\in (0,T^\star).
    \] 
    For the sharpness statements, suppose first that $c_W > 0$ and $M < \sqrt{\left\|f\right\|_\infty / c_W} $. Then it suffices to show that there exists $\rho_0 \in \mathcal{X}_+$ with $\left\|\rho_0\right\|_\infty = M$ and $\partial_t^+ \left\|\rho_t\right\|_\infty \Big\vert_{t=0} > 0$. For that purpose note that $\left\|f\right\|_\infty - c_W M^2 > 0$, so there is $c > c_W$ such that still $\left\|f\right\|_\infty - c M^2 > 0$. Then there exists by \cref{thm:partial+} a $\rho_0\in C_{c+}^\infty(\R^d)$ with $\left\|\rho_0\right\|_\infty  = M$ and
    \begin{align*}
        \partial^+_t \left\|\rho_t\right\|_\infty\Big\vert_{t=0} > \left\|f\right\|_\infty - c \left\|\rho_0\right\|_\infty^2 = \left\|f\right\|_\infty - c M^2 > 0.
    \end{align*}
    Now suppose that $c_W < 0$ and $M > 0$. Then again it suffices to show that there exists $\rho_0\in \mathcal{X}_+$ with $\left\|\rho_0\right\|_\infty = M$ and $\partial_t^+ \left\|\rho_t\right\|_\infty > 0$. Let $c_W < c < 0$, then by \cref{thm:partial+} there exists $\rho_0\in C_{c+}^\infty(\R^d)$ with $\left\|\rho_0\right\|_\infty = M$ and
    \begin{align*}
        \partial^+_t \left\|\rho_t\right\|_\infty\Big\vert_{t=0} > \left\|f\right\|_\infty - c \left\|\rho_0\right\|_\infty^2 \ge \left\|f\right\|_\infty \ge 0.
    \end{align*}
\end{proof}

In the following subsections, we prove \cref{lem:etaW,lem:divbound,lem:partial+,lem:rhoreg}, as well as \cref{lem:alphaW} (stated in \cref{sec:results}) which establishes existence of the limits defining $\eta_W$ and $\alpha_W$. Finally, we prove the claim made in the introduction regarding the asymptotics of \cref{eq:classicalpde} with $W \equiv 0$.

\subsubsection{Proofs of \cref{lem:alphaW,lem:etaW}}

Using the same argument as in the proof of \cref{lem:W1W2}, we can show that \cref{ass}(ii) implies
\begin{equation}\label{eq:LWconv}
    \left\|\Delta W \star g\right\|_{\mathcal{W}^{\gamma,\infty}} \le C \nnorm{g}_\gamma
\end{equation}
for any $\gamma \ge 0$ and $g\in \mathcal{X}^\gamma$ (where $C > 0$ depends on $\gamma$ but not $g$). We further need the following technical lemma.
\begin{lemma}\label{lem:rhotilde}
    There is $C > 0$ such that for any $g\in \mathcal{W}^{1,\infty}$ with $g(0) = 0$ and any $\varepsilon > 0$, there exists $\widetilde{g} \in \mathcal{W}^{1,\infty}$ with $\widetilde{g} = g$ on $B(0,\varepsilon)$, $\widetilde{g} \equiv 0$ on $\R^d\setminus B(0,2\varepsilon)$, and $\left\|\widetilde{g}\right\|_{\mathcal{W}^{1,\infty}} \le C \left\|g\right\|_{\mathcal{W}^{1,\infty}(B(0,\varepsilon))}$.
\end{lemma}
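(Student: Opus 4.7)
The plan is to produce $\widetilde{g}$ by a cutoff-after-radial-projection construction. The key observation is that $g(0)=0$ together with $g\in \mathcal{W}^{1,\infty}$ (so $g$ has a Lipschitz representative, which I fix from now on) forces $|g(y)|\le \left\|\nabla g\right\|_{L^\infty(B(0,\varepsilon))}\,|y|$ on $B(0,\varepsilon)$, and this is precisely what will compensate the unavoidable $\varepsilon^{-1}$ factor coming from the gradient of the cutoff. One cannot simply multiply $g$ by a bump because we only control $\nabla g$ on $B(0,\varepsilon)$, not on the annulus $B(0,2\varepsilon)\setminus B(0,\varepsilon)$, so $g$ has to be ``extended'' from $B(0,\varepsilon)$ to the larger ball in a controlled way before cutting off.

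Concretely, I would pick once and for all a smooth reference bump $\chi\colon \R^d\to [0,1]$ with $\chi\equiv 1$ on $B(0,1)$, $\supp \chi \subset B(0,2)$, and $\left\|\nabla \chi\right\|_\infty \le C_0$, and set $\chi_\varepsilon(x):=\chi(x/\varepsilon)$, so that $\chi_\varepsilon\equiv 1$ on $B(0,\varepsilon)$, $\supp\chi_\varepsilon\subset B(0,2\varepsilon)$, and $\left\|\nabla\chi_\varepsilon\right\|_\infty\le C_0/\varepsilon$. Let $\pi_\varepsilon\colon \R^d\to \overline{B(0,\varepsilon)}$ be the radial projection $\pi_\varepsilon(x)=x$ for $|x|\le \varepsilon$ and $\pi_\varepsilon(x)=\varepsilon x/|x|$ for $|x|>\varepsilon$, which is $1$-Lipschitz. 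Define
\[
    \widetilde{g}(x) := \chi_\varepsilon(x)\, g(\pi_\varepsilon(x)).
\]
By construction $\widetilde{g}=g$ on $B(0,\varepsilon)$ and $\widetilde{g}\equiv 0$ outside $B(0,2\varepsilon)$, and $\widetilde{g}$ is Lipschitz as a product of a smooth compactly supported function and the composition of two Lipschitz maps, hence in $\mathcal{W}^{1,\infty}$.

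It remains to estimate the norm. For $L^\infty$, the Lipschitz estimate from $g(0)=0$ gives $|g(\pi_\varepsilon(x))|\le \varepsilon\left\|\nabla g\right\|_{L^\infty(B(0,\varepsilon))}$ for every $x$, so $\left\|\widetilde{g}\right\|_\infty\le \varepsilon\left\|\nabla g\right\|_{L^\infty(B(0,\varepsilon))}$. For $\nabla \widetilde{g}$, differentiation by product and chain rule a.e.\ yields
\[
    \nabla\widetilde{g}(x) \;=\; g(\pi_\varepsilon(x))\,\nabla \chi_\varepsilon(x) \;+\; \chi_\varepsilon(x)\, D\pi_\varepsilon(x)^{\top}\,(\nabla g)(\pi_\varepsilon(x)).
\]
The first term is bounded, using the key inequality above, by $\bigl(\varepsilon\left\|\nabla g\right\|_{L^\infty(B(0,\varepsilon))}\bigr)\cdot (C_0/\varepsilon) = C_0 \left\|\nabla g\right\|_{L^\infty(B(0,\varepsilon))}$, so the $\varepsilon^{-1}$ is absorbed. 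For the second term, $D\pi_\varepsilon$ has operator norm $\le 1$ everywhere (it is the identity on $B(0,\varepsilon)$ and equals $(\varepsilon/|x|)(I-\widehat{x}\otimes\widehat{x})$ on the complement), so the term is bounded by $\left\|\nabla g\right\|_{L^\infty(B(0,\varepsilon))}$. Combining everything, $\left\|\widetilde{g}\right\|_{\mathcal{W}^{1,\infty}}\le C\left\|g\right\|_{\mathcal{W}^{1,\infty}(B(0,\varepsilon))}$ with a constant $C$ depending only on $d$ and $\chi$.

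The only subtle point—and the one I would check most carefully—is the behaviour of $\widetilde{g}$ across the sphere $\partial B(0,\varepsilon)$, where $\pi_\varepsilon$ is not smooth: on both sides $g(\pi_\varepsilon(\cdot ))$ agrees with the Lipschitz function $g$ on the sphere itself, and the Lipschitz constants on each side are uniformly bounded by $\left\|\nabla g\right\|_{L^\infty(B(0,\varepsilon))}$, so no singular contribution to $\nabla \widetilde{g}$ arises there. Everything else is routine once the projection-plus-cutoff construction is in place.
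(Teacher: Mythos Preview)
Your construction is correct and the estimates go through as you describe; the absorption of the $\varepsilon^{-1}$ from $\nabla\chi_\varepsilon$ by the bound $|g(\pi_\varepsilon(x))|\le \varepsilon\|\nabla g\|_{L^\infty(B(0,\varepsilon))}$ is exactly the right mechanism. One small remark: you do not actually need the pointwise chain rule formula for $\nabla(g\circ\pi_\varepsilon)$ (which, for a merely Lipschitz outer function composed with a merely Lipschitz inner function, is a little delicate); it suffices to note that $g\circ\pi_\varepsilon$ is Lipschitz with constant at most $\mathrm{Lip}(g|_{\overline{B(0,\varepsilon)}})\cdot\mathrm{Lip}(\pi_\varepsilon)\le \|\nabla g\|_{L^\infty(B(0,\varepsilon))}$, and then invoke $\mathcal{W}^{1,\infty}=C^{0,1}$.

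The paper takes a different route: instead of freezing $g$ at the sphere and cutting off, it \emph{reflects} $g$ radially, setting $\widetilde{g}(r,\varphi)=g(2\varepsilon-r,\varphi)$ on the annulus $\varepsilon\le r<2\varepsilon$ and $\widetilde{g}\equiv 0$ for $r\ge 2\varepsilon$. No cutoff is needed because $g(0)=0$ forces $\widetilde{g}$ to vanish continuously at $r=2\varepsilon$, and the reflection map $(r,\varphi)\mapsto(2\varepsilon-r,\varphi)$ is $1$-Lipschitz from the annulus into $\overline{B(0,\varepsilon)}$, so one reads off $\|\widetilde{g}\|_{L^\infty}\le\|g\|_{L^\infty(B(0,\varepsilon))}$ and $\mathrm{Lip}(\widetilde{g})\le\mathrm{Lip}(g|_{B(0,\varepsilon)})$ directly, with constant essentially $1$. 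Your projection-plus-cutoff argument is the more standard extension technique and would generalise more readily (e.g.\ to higher-order norms or non-radial geometries), at the cost of a slightly larger constant and the extra ingredient $\chi$; the paper's reflection is tailored to this specific radial setting but is shorter and avoids the cutoff altogether.
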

\begin{proof}
    Define $\widetilde{g}$ in radial coordinates by \[
        \widetilde{g}(r,\varphi) \coloneqq \begin{cases}
            g(r,\varphi) ,& r < \varepsilon,\\
            g(2\varepsilon - r,\varphi) ,& \varepsilon\le r < 2\varepsilon,\\
            0 ,& 2\varepsilon \le r,
        \end{cases}
    \] where $\varphi$ stands collectively for all $d-1$ angular variables. Then $\widetilde{g}$ is continuous, $\left\|\widetilde{g}\right\|_{L^\infty} \le \left\|g\right\|_{L^\infty(B(0,\varepsilon))}$ and $\left\|\widetilde{g}\right\|_{\text{Lip}} \le \left\|g\right\|_{\text{Lip}(B(0,\varepsilon))}$, where \[
    \left\|h\right\|_{\text{Lip}(\Omega)} \coloneqq \sup_{\substack{x,y\in \Omega \\ x\neq y} }  \frac{\left|h(x)-h(y)\right|}{\left| x-y \right| }
\] for $h\colon \R^d \to \R$ and $\Omega \subset \R^d$. Then recall the well-known result that $\mathcal{W}^{1,\infty} = C^{0,1}(\R^d)$, the space of bounded Lipschitz functions.
\end{proof}

The following lemma is identical to \cref{lem:etaW} except that $\eta_W$ is replaced by a possibly different value $\eta_W'$. We then show together with \cref{lem:alphaW} (which proves existence of $\alpha_W$ and $\eta_W$) that $\eta_W = \eta_W'$.
\begin{lemma}\label{lem:etaW2}
    There exist $\eta_W'\in \R$ and $\xi_W \in \R^d$ such that for any $g\in \mathcal{X}^1\cap C^1(\R^d)$ (in particular any $g\in \mathcal{X}^{\gamma}$ with $\gamma > 1$), \[
        \nabla \cdot (\nabla W\star g) = (\Delta W)\star g -\eta_W' g + \xi_W \cdot \nabla g.
        \] 
    If $W(x) = W(-x)$ for all $x\in \R^d \setminus \left\{ 0 \right\} $, then $\xi_W = 0$.
\end{lemma}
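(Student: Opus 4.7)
The plan is to view $\Delta W$ as a distribution on $\R^d$ and isolate the correction coming from the singularity of $W$ at the origin. Under \cref{ass}(ii), $\nabla W$ is locally integrable, so the formula $\langle \widetilde{\Delta W}, \phi\rangle \coloneqq -\int \nabla W \cdot \nabla \phi \, \dd y$ for $\phi \in C_c^\infty(\R^d)$ defines a distribution $\widetilde{\Delta W}$ of order at most one. On $\R^d \setminus \{0\}$ this distribution agrees with the pointwise-defined function $\Delta W$, which is itself locally integrable by assumption. Consequently $S \coloneqq \widetilde{\Delta W} - \Delta W$ is a distribution of order at most one supported at $\{0\}$, and the structure theorem for distributions supported at a single point forces
\[
S \;=\; -\eta_W'\,\delta_0 + \xi_W \cdot \nabla \delta_0
\]
for some $\eta_W' \in \R$ and $\xi_W \in \R^d$; derivatives $\partial^\alpha \delta_0$ with $|\alpha|\ge 2$ are excluded because each would raise the order of $S$ above one.

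To derive the stated identity, I would first treat $g \in C_c^\infty(\R^d)$, where a routine distributional calculation gives $\nabla\cdot(\nabla W\star g) = \widetilde{\Delta W}\star g$. Splitting $\widetilde{\Delta W} = \Delta W + S$ and using $\delta_0 \star g = g$, $(\partial_j \delta_0)\star g = \partial_j g$ produces the claimed formula on $C_c^\infty$. For general $g \in \mathcal{X}^1\cap C^1(\R^d)$, I would mollify and truncate, setting $g_n \coloneqq (\eta_n g) \star \chi_{1/n}$ for a slowly-varying cutoff $\eta_n$ and a standard mollifier $\chi$, so that $g_n \in C_c^\infty$, $\nnorm{g_n}_1$ stays uniformly bounded, and $g_n, \nabla g_n \to g, \nabla g$ locally uniformly. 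The pointwise identity for $g_n$ then transfers to $g$ using \cref{lem:W1W2} together with the analogous bound \cref{eq:LWconv} for $\Delta W \star g$; the correction $S \star g_n \to S \star g$ converges pointwise directly from locally uniform convergence of $g_n$ and $\nabla g_n$ because $S$ is supported at the origin.

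For the symmetry claim, if $W(-y) = W(y)$ then $\nabla W$ is odd, and the substitution $y \mapsto -y$ in the integral defining $\widetilde{\Delta W}$ shows that $\widetilde{\Delta W}$ is an even distribution; the pointwise function $\Delta W$ is also even, hence so is $S$. Since $\delta_0$ is even while each $\partial_j \delta_0$ is odd, evenness of $S$ forces $\xi_W = 0$. The main obstacle is the approximation step: arranging convergence strong enough that $\nabla\cdot(\nabla W\star g_n) \to \nabla\cdot(\nabla W\star g)$ pointwise. Rewriting this as $(\nabla W)\star \nabla g_n \to (\nabla W) \star \nabla g$ and applying dominated convergence, with the dominating integrability at the origin provided by $\nabla W \in L^1_{\mathrm{loc}}$ and at infinity by $\nabla W \in L^\infty(\R^d\setminus B(0,1))$ together with uniform boundedness of $\nabla g_n$ in $L^1\cap L^\infty$, is the standard remedy.
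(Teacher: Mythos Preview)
Your proposal is correct and takes a genuinely different route from the paper. You work distributionally: you define $\widetilde{\Delta W}$ via $-\int \nabla W\cdot\nabla\phi$, observe that $S\coloneqq \widetilde{\Delta W}-\Delta W$ is supported at the origin with order $\le 1$, and invoke the structure theorem for point-supported distributions to extract $\eta_W'$ and $\xi_W$; the identity is then established on $C_c^\infty$ and transferred to $\mathcal{X}^1\cap C^1$ by mollification and cutoff. The paper instead never passes through $C_c^\infty$ or the structure theorem: it fixes a point $x_0$, splits $g = g\varphi_\varepsilon + g\psi_\varepsilon$ with a bump $\varphi_\varepsilon$ near $x_0$, and defines the correction functional $\mathcal{F}[g]\coloneqq \nabla\cdot(\nabla W\star g)(x_0)-(\Delta W\star g)(x_0)$ directly on $\mathcal{X}^1$. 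It then shows $\mathcal{F}$ is linear, bounded by $C\nnorm{g}_1$, and local, and uses the auxiliary extension \cref{lem:rhotilde} to prove $|\mathcal{F}[g]|\le C\varliminf_{\varepsilon\to 0}\|g\|_{\mathcal{W}^{1,\infty}(B(x_0,\varepsilon))}$, forcing $\mathcal{F}[g]=-\eta_W' g(x_0)+\xi_W\cdot\nabla g(x_0)$. Your route is more conceptual and shorter once the structure theorem is granted, but shifts the technical burden into the approximation step (your final paragraph); the paper's route is more self-contained and avoids approximation entirely, at the price of the ad hoc extension lemma. For the symmetry claim, the paper computes $\mathcal{F}[g]$ explicitly for $g$ linear near the origin, while your parity argument for $S$ is cleaner. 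One small remark on your approximation: the far-field convergence is not literally dominated convergence (there is no $n$-independent majorant), but your intended argument works if you split $\nabla W=F_1+F_2$ as in \cref{lem:W1W2} and use $\|F_2\|_{L^\infty}\|\nabla g_n-\nabla g\|_{L^1}\to 0$ for the far field and locally uniform convergence of $\nabla g_n$ against $F_1\in L^1$ for the near field.
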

\begin{proof}
    We prove the claim at a fixed but arbitrary $x_0\in \R^d$, and assume without loss of generality that $x_0 = 0$. Recall that $\nabla W\star g \in \mathcal{W}^{1,\infty}$ for any $g\in \mathcal{X}^1$ by \cref{lem:W1W2}, so it is bounded and globally Lipschitz continuous. This remains true if we replace $g$ by $g \varphi_\varepsilon$ for a smooth radially symmetric bump function $\varphi_\varepsilon$ with $ \ind_{B(0,\varepsilon)} \le \varphi_\varepsilon \le \ind_{B(0,2\varepsilon)}$. Put $\psi_\varepsilon \coloneqq 1-\varphi_\varepsilon$. Then,
    \begin{align*}
        \nabla \cdot (\nabla W\star g)
        &= \nabla \cdot (\nabla W\star g\psi_\varepsilon) + \nabla \cdot (\nabla W\star g \varphi_\varepsilon).
    \end{align*}
    Now, \[
        \nabla \cdot (\nabla W\star g \psi_\varepsilon)(0) = \nabla \cdot \left(\int \nabla W(\cdot -y) g(y) \psi_\varepsilon(y)\dd y \right)(0)= (\Delta W \star g\psi_\varepsilon)(0),
    \] because $\Delta W$ is bounded away from zero. More precisely, as long as $x\in B(0,\varepsilon / 2)$, say, then $ \left| x-y \right| \ge \left| y \right|  - \left| x \right| \ge \varepsilon / 2  $ for any $y\in \R^d$ with $\psi_\varepsilon(y) > 0$, and $\sup_{B(0,\varepsilon / 2)^{c}} \Delta W < \infty$ by \cref{ass}(ii). By \cref{eq:LWconv}, $\Delta W \star g$ is well-defined, hence by dominated convergence, $(\Delta W\star g \psi_\varepsilon)(0) \to (\Delta W\star g)(0)$ as $\varepsilon \to 0$. Thus,
    \[
        \mathcal{F}[g] \coloneqq \lim_{\varepsilon \to 0} \nabla \cdot (\nabla W\star g \varphi_\varepsilon ) (0) = \nabla \cdot (\nabla W\star g)(0) - (\Delta W\star g)(0)
        \] exists and is finite for any $g\in \mathcal{X}^1$. Inspecting the RHS above we see that \[
        \mathcal{F} \text{ is linear and } \left| \mathcal{F}[g] \right| \le C \nnorm{g}_{1}.
    \] Indeed, $\left| \mathcal{F}[g] \right| \le \left\|\nabla \cdot (\nabla W\star g)\right\|_{L^\infty} + \left\|\Delta W \star g\right\|_{L^\infty}$, and
        \begin{align*}
            \left\|\nabla \cdot (\nabla W\star g)\right\|_{L^\infty}
            &\le \left\|\nabla W\star g\right\|_{\mathcal{W}^{1,\infty}} \le C \nnorm{g}_1
        \end{align*}
        by \cref{lem:W1W2}, and $\left\|\Delta W\star g\right\|_{L^\infty} \le C \nnorm{g}_1$ by \cref{eq:LWconv}. Furthermore, from the definition it is immediate that $\mathcal{F}[g]$ depends on $g$ only locally around $0$, in the sense that $\mathcal{F}[g_1] = \mathcal{F}[g_2]$ if $g_1, g_2 \in \mathcal{X}^1$ coincide in a neighbourhood of $0$.

        Now suppose that $g\in \mathcal{X}^1$ with $g(0) = 0$. Let $\varepsilon\in (0,1)$, then by \cref{lem:rhotilde} there is $\widetilde{g}\in \mathcal{X}^1$ such that $\widetilde{g} = g$ on $B(0,\varepsilon)$, $\widetilde{g} \equiv 0$ on $\R^d\setminus B(0,2\varepsilon)$ and $\nnorm{\widetilde{g}}_1 \le C \left\|\widetilde{g}\right\|_{\mathcal{W}^{1,\infty}} \le C \left\|g\right\|_{\mathcal{W}^{1,\infty}(B(0,\varepsilon))}$ with constants independent of $g$ and $\varepsilon$ (the first inequality holds because $\widetilde{\rho}$ is supported in $B(0,\varepsilon) \subset B(0,1)$). This implies that $\left| \mathcal{F}[g] \right| = \left| \mathcal{F}[\widetilde{g}] \right| \le C \nnorm{\widetilde{g}}_1 \le C \left\|g\right\|_{\mathcal{W}^{1,\infty}(B(0,\varepsilon))}$. Letting $\varepsilon \to 0$ implies \[
            \forall g\in \mathcal{X}^1, g(0) = 0 \colon \left| \mathcal{F}[g] \right| \le C  \varliminf_{\varepsilon\to 0} \left\|g\right\|_{\mathcal{W}^{1,\infty}(B(0,\varepsilon))}.
            \] In particular, if $g\in \mathcal{X}^1\cap C^1(\R^d)$ and $g(0) = \nabla g(0) = 0$ then $\mathcal{F}[g] = 0$. By linearity, there must be $\eta_W' \in \R$ and $\xi_W \in \R^d$ with \[
            \mathcal{F}[g] = -\eta_W' g(0) + \xi_W \cdot \nabla g(0)
        \] for all $g\in \mathcal{X}^1 \cap C^1(\R^d)$. Now suppose that $W(x)= W(-x)$ for all $x\in \R^d$. To show that $\xi_W = 0$ it suffices to prove that whenever $g\in \mathcal{X}^1\cap C^1(\R^d)$ with $g(0) = 0$, then $\mathcal{F}[g] = 0$. Since we already know that $\mathcal{F}[g]$ does not change when we replace $g$ by a function in $\mathcal{X}^1\cap C^1(\R^d)$ that has the same value and gradient at $0$, we may assume that $g(x) = \beta \cdot x$ for all $x\in B(0,1)$, where $\beta = \nabla g(0)$. Then,
        \begin{multline*}
            \nabla \cdot (\nabla W\star g \varphi_\varepsilon) (0)
            = \beta \cdot (\nabla W \star \varphi_\varepsilon)(0) + (\nabla W \star g \nabla \varphi_\varepsilon)(0)\\
            = \beta \cdot \int \nabla W(x) \varphi_\varepsilon(x) \dd x + \beta \cdot \int x (\nabla W(x) \cdot \nabla \varphi_\varepsilon(x))\dd x
            = 0,
        \end{multline*}
        because both integrands are antisymmetric, hence $\mathcal{F}[g] = 0$. 
\end{proof}

We now prove \cref{lem:alphaW}, which establishes existence of $\eta_W$ and $\alpha_W$, and show that $\eta_W = \eta_W'$. Note that $\left<\nabla W_N \right> (r) = - c_d ^{-1} r ^{1 - d}$, so equivalently to \cref{eq:alphaWdef} we could write \[
    \eta_W = -c_d \lim_{r \to 0} r^{d - 1}\left<\nabla W \right> (r),\qquad \alpha_W = -c_d \lim_{R \to \infty} R^{d-1}\left<\nabla W \right> (R).
\]

\begin{proof}[Proof of \cref{lem:alphaW}]
    Let $(g_\varepsilon)_{\varepsilon > 0}$ be a smooth approximation to unity with $g_\varepsilon$ supported in $B(0,\varepsilon)$ for all $\varepsilon > 0$. Then by \cref{lem:etaW2,eq:defWbar}, and recalling that $c_d > 0$ denotes the surface area of the unit ball in $\R^d$,
    \begin{align}\begin{split}\label{eq:prfWrel:11}
        \int\limits_{\partial B(0,R)} (\nabla W \star g_\varepsilon) \cdot \dd \widehat{n}
        &= \int_{B(0,R)} \nabla \cdot (\nabla W\star g_\varepsilon)(x)\dd x\\
        &= -\eta_W' \int_{B(0,R)}g_\varepsilon(x) \dd x + \xi_W \cdot \int_{B(0,R)} \nabla g_\varepsilon(x) \diff x  \\
        &\hspace{2cm} + \int_{B(0,R)} (\Delta W \star g_\varepsilon)(x)\dd x\\
        &= -\eta_W' + \int_{B(0,R)} (\Delta W \star g_\varepsilon)(x) \diff x,
    \end{split}\end{align}
    where we used that $\int_{B(0,R)} \nabla g_\varepsilon(x) \diff x = 0$ by integration by parts. We want to take $\varepsilon \to 0$ for fixed $R > 0$. On the LHS we obtain \[
        \int\limits_{\partial B(0,R)} (\nabla W \star g) \cdot \diff \widehat{n} \longrightarrow  \int\limits_{\partial B(0,R)} \nabla W \cdot \diff \widehat{n} = c_d R ^{d-1} \left<\nabla W \right> (R),
    \] by dominated convergence, which is applicable because $\nabla W$ is bounded away from the origin by \cref{ass}(ii). For the RHS, we cannot directly apply dominated or monotone convergence without further assumptions on $\Delta W$. Instead, we write $\Delta W \star g_\varepsilon = (\Delta W)_+ \star g_\varepsilon - (\Delta W)_- \star g_\varepsilon$, and 
    \begin{align}\begin{split}\label{eq:prfWrel:10}
        \int_{B(0,R)} [(\Delta W)_+ \star g_\varepsilon] (x) \diff x
        &= \int_{B(0,R)} \int_{B(0,\varepsilon)} (\Delta W)_+(x-y) g_\varepsilon(y) \diff y \diff x\\
        &= \int_{B(0,\varepsilon)}g_\varepsilon(y) \left( \int_{B(y,R)} (\Delta W)_+(x) \diff x\right) \diff y.
    \end{split}\end{align}
    Now, for any $y\in B(0,\varepsilon)$, if say $\varepsilon < R  /2$,
    \begin{align*}
        \left| \int_{B(0,R)} \Delta W(x)_+ \diff x - \int_{B(y,R)} \Delta W(x)_+ \diff x \right| 
        &\le \int\limits_{B(0,R) \Delta B(y,R)} |\Delta W(x)| \diff x\\
        &\le C \varepsilon R ^{d-1} \sup_{|z| \ge R / 2}|\Delta W(z)|,
    \end{align*}
    where $A \Delta B = (A\setminus B) \cup (B \setminus A)$ for $A,B\subset \R^d$. This goes to zero as $\varepsilon \to 0$, so continuing in \cref{eq:prfWrel:10},
    \begin{align*}
        \int_{B(0,\varepsilon)} g_\varepsilon(y) &\left( \int_{B(y,R)} (\Delta W)_+(x) \diff x \right) \diff y\\
                           &= \int_{\R^d} g_\varepsilon(y) \left( \int_{B(0,R)} (\Delta W)_+(x) \diff x + O(\varepsilon) \right) \diff y \\
                           &\longrightarrow \int_{B(0,R)} (\Delta W)_+(x) \diff x,
    \end{align*}
    as $\varepsilon \to 0$. We can proceed similarly with $(\Delta W)_-$ and combine the results to obtain \[
        \int_{B(0,R)} (\Delta W \star g_\varepsilon)(x)\diff x \longrightarrow  \int_{B(0,R)} \Delta W(x) \diff x
        \] as $\varepsilon \to 0$. We have now shown that letting $\varepsilon \to 0$ in \cref{eq:prfWrel:11} yields \[
        -\eta_W' + \int_{B(0,R)} \Delta W(x) \diff x = c_d R ^{d-1} \left<\nabla W\right>(R) = -\frac{\left<\nabla W\right>(R)}{\left<\nabla W_N \right> (R)},
    \] where we recalled $\left<\nabla W_N \right> (R) = - c_d^{-1} R^{1-d}$. If we let $R \to 0$, then the LHS tends to $-\eta_W'$ because $\Delta W$ is locally integrable. Thus the limit on the RHS exists, so $\eta_W$ in \cref{eq:alphaWdef} is well-defined and equals $\eta_W'$. If we let $R \to \infty$, then the LHS tends to $-\eta_W + \int \Delta W$ by \cref{ass}(iii), which implies that $\alpha_W$ in \cref{eq:alphaWdef} is well-defined and that $\alpha_W  = \eta_W - \int (\Delta W)$ holds.
\end{proof}

\subsubsection{Proof of \cref{lem:divbound}}

We already proved \cref{eq:divbound} just before stating \cref{lem:divbound}, so it remains to prove sharpness.
\begin{proof}[Proof of \cref{lem:divbound}, Sharpness]
     Let $c > c_W$, $x_0\in\R^d$ and $M > 0$. We will construct a function $g\in C^\infty_{c+}(\R^d)$ such that $g$ has a global maximum of height $M$ at $x_0$ and $\nabla \cdot (\nabla W \star g)(x_0) > -cg(x_0)$. Because this inequality is linear in $g$, we may assume $M = 1$. Put $U\coloneqq \left\{x\in \R^d\setminus \left\{ 0 \right\} \colon  \Delta W(x) > 0 \right\}$, which is an open, possibly empty subset of $\R^d \setminus \left\{ 0 \right\} $. For $\varepsilon > 0$ let \[
        U_\varepsilon \coloneqq \left\{ x\in U\colon d(x,U^{c}) > \varepsilon \right\} ,
    \] which is also open and $\bigcup_{\varepsilon > 0} U_\varepsilon = U$ (note that we shrink, not grow $U$ by $\varepsilon$). Put $h_\varepsilon \coloneqq \varphi_{\varepsilon / 2} \star \ind_{U_\varepsilon \cap B(0,\varepsilon^{-1})}$ where $\varphi \in C_{c+}^\infty(B(0,1))$ with $\int \varphi = 1$ and $\varphi_\varepsilon(x) \coloneqq \varepsilon^{-d} \varphi(\varepsilon^{-1}x)$. Then $h_\varepsilon \in C_{c+}^\infty(\R^d)$ and
    \begin{equation}\label{eq:hepsmonotone}
        \ind_{U_{2\varepsilon} \cap B(0,(2\varepsilon)^{-1})} \le h_\varepsilon \le \ind_{U_{\varepsilon / 2}}.
    \end{equation} In particular $0 \le h_\varepsilon \uparrow \ind_{U}$ as $\varepsilon \to 0$, and, since $h_\varepsilon$ is supported on $U$ for all $\varepsilon > 0$, and by monotone convergence,
    \begin{align}\label{eq:heps}
        \int_{\R^d}\Delta W(x) h_\varepsilon(x) \dd x = \int_{\R^d} (\Delta W)_+(x) h_\varepsilon(x) \dd x \uparrow \int(\Delta W)_+,\quad \varepsilon \to 0.
    \end{align}
    Furthermore, by \cref{eq:hepsmonotone}, $h_\varepsilon \equiv 0$ on $B(0,\varepsilon / 2)$, in particular $h_\varepsilon(0) = \Delta h_\varepsilon(0) = 0$.

    Now let $f_\varepsilon \in C_{c+}^\infty(B(x_0,\varepsilon / 2))$ with $f_\varepsilon(x_0) = 1$, $\Delta f_\varepsilon(x_0) = 0$, $0 \le f_\varepsilon \le 1$. In particular $f_\varepsilon \to 0$ a.e.\ as $\varepsilon \to 0$, and by dominated convergence (recall that $\Delta W$ is locally integrable by \cref{ass}(ii)),
        \begin{align}\label{eq:feps}
            (\Delta W \star f_\varepsilon)(x) \to 0,\quad \varepsilon \to 0,
        \end{align}
            for any $x\in \R^d$. Now put \[
                g_\varepsilon \coloneqq h_\varepsilon(x_0 - \cdot ) + f_\varepsilon \in C_{c+}^\infty(\R^d),\qquad \varepsilon > 0.
        \] Recall that $h_\varepsilon(x_0-\cdot )$ is supported in $B(x_0, \varepsilon / 2)^{c}$ and $f_\varepsilon$ is supported in $B(x_0,\varepsilon / 2)$, and both are upper bounded by $1$, so $0 \le g_\varepsilon \le 1$. Furthermore, \[
        g_\varepsilon(x_0) = h_\varepsilon(0) + f_\varepsilon(x_0) = 1,\qquad \Delta g_\varepsilon(x_0) = \Delta h_\varepsilon(0) + \Delta f_\varepsilon(x_0) = 0,
        \] so $g_\varepsilon$ attains a global maximum at $x_0$, and by \cref{eq:heps,eq:feps},
        \begin{align*}
            (\Delta W \star g_\varepsilon)(x_0) = (\Delta W \star h_\varepsilon)(0) + (\Delta W \star f_\varepsilon)(x_0) \to \int (\Delta W)_+,\quad \varepsilon \to 0.
        \end{align*}
        Now choose $\varepsilon > 0$ so small that $(\Delta W \star g_\varepsilon)(x_0) > \int (\Delta W)_+ - (c-c_W)$, then
    \begin{align*}
        \nabla \cdot (\nabla W\star g_\varepsilon)
        = (\Delta W \star g_\varepsilon)(x_0) - \eta_W g_\varepsilon(x_0)
        &= (\Delta W \star g_\varepsilon)(x_0) - \int(\Delta W)_+ - c_W\\
        &> -c\\
        &= -cg_\varepsilon(x_0),
    \end{align*}
    where we used $\eta_W = c_W + \int (\Delta W)_+$, see \cref{eq:cW}.
\end{proof}

\subsubsection{Proof of \cref{lem:partial+,lem:rhoreg}}

\begin{proof}[Proof of \cref{lem:partial+}]
    It suffices to show that for any fixed $t_0 \in [0,T)$, and every $\varepsilon,\varepsilon' > 0$,
    \begin{equation}\label{eq:prfinf_1}
        \left\|g_t\right\|_\infty \le \left\|g_{t_0}\right\|_\infty + \int_{t_0}^{t} \Big(C(s) + \varepsilon\Big) \dd s + \varepsilon',\quad t\in [t_0,T].
    \end{equation} Indeed, taking $\varepsilon' \downarrow 0$, this implies that \cref{eq:prfinf_1} holds for all $t \in [t_0,T]$, $\varepsilon > 0$, and $\varepsilon' = 0$. Thus, for every $\varepsilon > 0$,
    \begin{equation*}
        \partial_t^+ \left\|g_t\right\|_\infty \Big\vert_{t = t_0} = \varlimsup_{t\downarrow t_0}\frac{\left\|g_t\right\|_\infty - \left\|g_{t_0}\right\|_\infty}{t-t_0} \le \varlimsup_{t\downarrow t_0}\frac{1}{t-t_0} \int_{t_0}^t \Big( C(s) + \varepsilon\Big) \dd s = C(t_0) + \varepsilon,
    \end{equation*} which implies the claim at $t_0$ by taking $\varepsilon \downarrow 0$.

    Now fix $t_0\in [0,T)$, $\varepsilon,\varepsilon' > 0$, and we show \cref{eq:prfinf_1}. We may assume that $t_0 = 0$. We want to show that 
    \begin{equation}\label{eq:prflem+:1}
        F(t) \coloneqq \left\|g_t\right\|_\infty - \left\|g_0\right\|_\infty - \int_0^t \Big(C(s) -\varepsilon\Big)\dd s - \varepsilon' \le 0
    \end{equation} for all $t \in [0,T]$. Define \[
    t_1 \coloneqq \sup \left\{ t \in [0,T]\colon F(t) \le 0 \right\}.
\] $F$ is continuous because $g \in C([0,T],L^\infty)$, and $F(0) = -\varepsilon' < 0$, so $t_1 > 0$. Now if $F(t) \le 0$ does not hold for all $t\in [0,T]$, then $t_1 < T$, so there would be $T > t_n \downarrow t_1 > 0$ such that $F(t) \le 0$ for all  $0 \le t \le t_1$ and $F(t_n) > 0$ for all $n\in \N$. Since $g_{t_n}$ is Lipschitz continuous and integrable it must vanish at infinity, so it attains its maximum at some $x_n \in \R^d$ and we have
    \begin{equation}\label{eq:prflem+:2}
    F(t_n) = g(t_n,x_n) - \left\|g_0\right\|_\infty - \int_0^{t_n} \Big(C(s) + \varepsilon\Big)\dd s - \varepsilon' > 0,\quad n\in \N.
\end{equation} In particular, $g(t_n,x_n) \ge \varepsilon'$ for all $n\in \N$, so $g(t_1,x_n) \ge \varepsilon' / 2$ for all $n\ge n_0$ for some $n_0\in \N$, because $g\in C([0,T],L^\infty)$. Then by Lipschitz continuity of $g(t_1,\cdot )$ in space, $\inf_{n\in \N} \int_{B(x_n,1)} g(t_1,y) \diff y > 0$, which would contradict integrability of $g(t_1,\cdot )$ if $(x_n)$ were unbounded. Hence $(x_n)$ must be bounded, without loss of generality already convergent to some $x_1\in \R^d$. Then, using \cref{eq:prflem+:1,eq:prflem+:2},
    \begin{align*}
        F(t_1) &= \left\|g_{t_1}\right\|_\infty - \left\|g_0\right\|_\infty - \int_0 ^{t_1} \Big(C(s) + \varepsilon\Big) \dd s - \varepsilon',\\
               &\text{ and }\\
        F(t_1) &= \lim_{n\to \infty} F(t_n) = g(t_1,x_1) - \left\|g_0\right\|_\infty - \int_0^{t_1} \Big( C(s) + \varepsilon\Big) \dd s - \varepsilon',\\
    \end{align*} so $\left\|g_{t_1}\right\|_\infty = g(t_1,x_1)$. Thus by assumption $\partial_t g(t_1,x_1) \le C(t_1)$, so
    \begin{align*}
        \forall x\in \R^d\colon g(t_1,x) - \left\|g_0\right\|_\infty - \int_0^{t_1} \Big(C(s) + \varepsilon\Big)\dd s - \varepsilon' \le F(t_1) &= 0,\\
        \partial_t\Big[ g(\cdot ,x_1) - \left\|g_0\right\|_\infty - \int_0^t \Big(C(s)+\varepsilon\Big)\dd s - \varepsilon'\Big]_{t=t_1} \le C(t_1) - \Big(C(t_1) + \varepsilon\Big) = -\varepsilon &< 0.
    \end{align*}
    But this implies, since $\partial_t g$ is jointly continuous, that the second inequality holds in $[t_1,t_1+\delta) \times B(x_1,\delta)$ for some $\delta > 0$, so in fact $g(t,x) < \left\|g_0\right\|_\infty + \int_0^t \Big(C(s)+\varepsilon\Big) \dd s + \varepsilon'$ for all $(t,x) \in(t_1,t_1+\delta) \times B(x_1,\delta)$, which contradicts the fact \cref{eq:prflem+:2} that $g(t_n,x_n) > \left\|g_0\right\|_\infty + \int_0^t \Big( C(s) + \varepsilon\Big) \dd s + \varepsilon'$ for all $n\in \N$.
\end{proof}

\begin{proof}[Proof of \cref{lem:rhoreg}]
    Let $\rho_0\in \mathcal{X}_+$, $T \in (0,T^\star)$, and fix $\gamma \in (2,2+\gamma_f)$. Then by \cref{thm:regularity}, $\rho\in C([0,T],\mathcal{X}_+) \cap C((0,T],\mathcal{X}^\gamma)$, in particular $\rho_t$ is Lipschitz continuous for $t \in (0,T]$. It remains to prove that $\partial_t \rho$ is jointly continuous on $[\delta,T]\times \R^d$ for any $\delta > 0$. By the instant regularisation proved in \cref{thm:regularity}, $\rho_\delta \in \mathcal{X}^\gamma$, so we may as well assume that $\rho_0\in \mathcal{X}^\gamma$ and $\delta = 0$. Then we know that $\rho$ solves \cref{eq:classicalpde} on $[0,1]$, so 
    \begin{equation}\label{eq:prfrhoreg:1}
        \partial_t \rho_t = \Delta \rho_t + \nabla \cdot (\rho_t (\nabla W \star \rho_t)) + f.
    \end{equation} Using $\rho \in C([0,1],\mathcal{X}^{\gamma})$ and \cref{lem:W1W2,eq:LWconv,lem:etaW}, it is straightforward to confirm through direct calculation that the RHS in \cref{eq:prfrhoreg:1} and therefore $\partial_t \rho \in C([0,1],\mathcal{W}^{\gamma-2,\infty})$, where $\mathcal{W}^{\gamma-2,\infty} = C^{0,\gamma-2}(\R^d)$ (see \cref{eq:Wsinfty} in \cref{app:sobolev}) with $\gamma-2 \in (0,1)$. In particular, $\partial_t \rho$ is continuous in time, and $(\gamma-2)$-Hölder continuous in space uniformly on $[0,T]$. This implies joint continuity
\end{proof}

\subsubsection{Asymptotics of solution in absence of repulsion}

We close by presenting a proof of a simple claim made in the introduction regarding the asymptotics of \cref{eq:classicalpde} with $W \equiv 0$.
\begin{lemma}\label{lem:whenWis0}
    Let $f\in \mathcal{X}_+$ not a.e.\ zero, and put $\rho_t \coloneqq \int_0^t G_s \star f\dd s$ for $t \ge 0$.
    \begin{enumerate}
        \item If $d\le 2$, then $\rho_t\uparrow \infty$ locally uniformly.
        \item If $d\ge 3$, then $\rho_t \uparrow \rho$ where $\rho \in L^\infty$.
    \end{enumerate}
\end{lemma}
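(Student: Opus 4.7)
The starting observation is that, since $G_s \geq 0$ and $f \geq 0$, the family $(\rho_t)_{t\ge 0}$ is pointwise monotone nondecreasing. Hence $\rho_t \uparrow \rho_\infty$ pointwise, where
\[
    \rho_\infty(x) \coloneqq \int_0^\infty (G_s \star f)(x) \diff s \in [0,\infty],
\]
and by Fubini's theorem (nonnegative integrand) we may interchange orders of integration to write $\rho_\infty(x) = \int_{\R^d} f(y) H(x-y) \diff y$ with $H(z) \coloneqq \int_0^\infty G_s(z) \diff s$.

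For assertion (ii), $d\ge 3$: a direct substitution $u = |z|^2/(2s)$ shows
\[
    H(z) = \frac{\Gamma(d/2-1)}{2 \pi^{d/2}} |z|^{2-d},\quad z \neq 0,
\]
which is locally integrable on $\R^d$ when $d\ge 3$ and bounded outside any ball around the origin. Splitting the convolution according to whether $|x-y|\le 1$ or $|x-y|>1$ and using $f\in \mathcal{X}_+ = L^1\cap L^\infty$ gives
\[
    \rho_\infty(x) \le \left\|f\right\|_\infty \int_{|z|\le 1} H(z) \diff z + H(1) \left\|f\right\|_{L^1} < \infty,
\]
uniformly in $x$, so $\rho_\infty \in L^\infty$. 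Combined with the monotone convergence already noted, this is (ii).

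For assertion (i), $d\le 2$: the key is that $\int_1^\infty s^{-d/2}\diff s=\infty$ in these dimensions. Since $f\in L^1\cap L^\infty$ is nonnegative and not a.e.\ zero, one can find $\varepsilon>0$ and a bounded measurable set $A\subset \R^d$ of positive measure on which $f \ge \varepsilon$ (take an appropriate superlevel set intersected with a large ball). Fix any compact $K\subset \R^d$ and set $M \coloneqq \sup_{x\in K,\, y\in A}|x-y|<\infty$. For $x\in K$, $y\in A$, and $s\ge 1$ one has the crude bound $G_s(x-y)\ge (2\pi s)^{-d/2} \e^{-M^2/2}$, so
\[
    \rho_t(x) \ge \varepsilon \int_A \int_1^{t} G_s(x-y) \diff s \diff y \ge \varepsilon |A| \e^{-M^2/2} (2\pi)^{-d/2} \int_1^t s^{-d/2} \diff s.
\]
The last integral equals $\log t$ for $d = 2$ and $2(\sqrt{t}-1)$ for $d = 1$, both of which diverge as $t \to \infty$. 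Since the lower bound is independent of $x\in K$, we obtain $\inf_{x\in K}\rho_t(x) \to \infty$, which is locally uniform convergence of $\rho_t$ to $+\infty$.

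I do not expect a genuine obstacle here: the argument is a straightforward combination of Fubini, the explicit form (or substitution-based computation) of the time integral of the heat kernel, and Young's inequality for the $d\ge 3$ case. The only mild care needed is in the low-dimensional case to extract a set of positivity for $f$ and bound $G_s(x-y)$ from below uniformly over a compact $\times$ bounded pair, which the Gaussian tail makes trivial once one restricts to $s\ge 1$.
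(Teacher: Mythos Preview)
Your proof is correct and follows essentially the same approach as the paper's: for $d\ge 3$ you identify $\int_0^\infty G_s\,\diff s$ with a constant multiple of $|x|^{2-d}$ and split the convolution using $f\in L^1\cap L^\infty$, and for $d\le 2$ you restrict to a set of positivity for $f$, bound the Gaussian below on $s\ge 1$, and use the divergence of $\int_1^\infty s^{-d/2}\,\diff s$. The only cosmetic difference is that the paper normalises $\int_{B(0,1)} f \ge 1$ via translation and scaling rather than extracting a superlevel set.
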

\begin{proof}
    \begin{enumerate}
        \item We may assume $\int_{B(0,1)} f(x) \dd x \ge 1$. Then, for any $x\in \R^d$,
            \begin{align*}
                \rho_t(x) = \int_0^t G_s \star f(x) \dd s
                &\ge \int_0^t \int_{B(0,1)} f(y) G_s(x-y) \dd y \dd s\\
                &\ge \int_0^t (2\pi s)^{-d / 2}\e^{-(\left| x \right| +1)^2 / (2s)} \dd s\\
                &\ge  \e^{-(\left| x \right| +1)^2 / 2} \int_1^t (2\pi s)^{-d / 2} \dd s,
            \end{align*}
            which if $d  \le 2$ goes to $\infty$ locally uniformly in $x$ as $t \to \infty$.
        \item We have $\rho_t \uparrow \rho \coloneqq \int_0^\infty G_s \star f \dd s = G \star f$ as $t \to \infty$, where \[
                G(x) \coloneqq \int_0^\infty G_s(x) \dd s = c(d) \left| x \right| ^{2-d},\quad x\in \R^d,
            \] for some $c(d) > 0$ is the well-known Green's function of the Laplace equation in $d \ge 3$. Then $G$ is integrable at the origin and bounded away from the origin, and $f$ is both bounded and integrable, so $G\star f$ is bounded. 
    \end{enumerate}
\end{proof}

\section{Conclusion and Outlook}\label{sec:outlook}
We established sharp conditions on the repulsive potential for a form of the maximum principle \cref{eq:cheapMP} and a strong notion of global boundedness \cref{eq:cheapMprop} to hold. The latter is especially interesting in light of the motivation from population biology -- see the introduction and \cref{app:superprocesses} -- because it gives a sufficient condition on $W$ for global boundedness of solutions in the sense that
\begin{equation}\label{eq:cheapbounded}
    \forall \rho_0\in \mathcal{X}_+\colon \sup_{t\ge 0}\left\|\rho_t\right\|_\infty < \infty.
\end{equation}
Note however that \cref{eq:cheapMprop} is a much stronger property than \cref{eq:cheapbounded}, which we would expect to hold under weaker assumptions: $c_W > 0$ necessitates both $\eta_W > 0$ and $\alpha_W > 0$, that is a singular repulsion and $|\nabla W(x)| \gtrsim |x|^{1-d}$ for large $|x|$. However, we would expect that \cref{eq:cheapbounded} should also hold for a sufficiently strong smooth repulsion, and it seems unlikely that $|\nabla W(x)| \gtrsim |x|^{1-d}$ is sharp; for example if $d \ge 3$, then \cref{eq:cheapbounded} already holds with $W = 0$. Ongoing work tentatively suggests that the critical strength of the repulsion for \cref{eq:cheapbounded} is $|\nabla W(x)| \gtrsim |x|^{-1}$ if $d = 1$ and $|\nabla W(x)| \gtrsim |x|^{-3}$ if $d = 2$ (and none if $d \ge 3$).

Another direction for future research would be to study non-negative steady states associated with \cref{eq:classicalpde}, whose existence we would generally expect to be related to \cref{eq:cheapbounded}.


\section*{Acknowledgements}
I would like to thank my supervisor Alison Etheridge for many lively discussions, guidance, encouragement, and helpful feedback. I would like to thank Jos\'e Carrillo for many insightful conversations, valuable feedback, for pointing out useful literature, and for advice regarding publication of the project. I thank the referees, particularly for pointing out important missing references, and for helping to improve the presentation of the paper.

\appendix
\section*{Appendix}
\section{Connection with Stability of Population Dynamics}\label{app:superprocesses}

We present here in some more detail the connection between the asymptotic behaviour of solutions to \cref{eq:classicalpde} and long-term (in-)stability of branching particle systems (BPS). We will explain how the dichotomy observed in the introduction is related to the fact that an ordinary BPS (without immigration or interaction) started from infinite mass is unstable in dimensions $d \le 2$ in the sense that the process' mass concentrates, as time goes on, in increasingly large ``clumps'', with space in between growing increasingly empty.

\subsection*{SuperBrownian Motion}

Long-term instabilities of a BPS, for now without immigration or interaction, are due to random fluctuations in the branching mechanism, so if we want to study them using a scaling limit, then the scaling needs to retain stochasticity. Indeed, the hydrodynamic rescaling just leads to the heat equation, which has stable long-term behaviour in any dimension. A well-studied approach to retain stochasticity is to scale up the branching rate at the same time as the particle density, leading to a measure-valued process called \emph{superBrownian motion (SBM)} \cite{alisonsuperprocesses,perkinssuperprocesses}.
Formally, it solves the stochastic partial differential equation (SPDE)
\begin{equation}\label{eq:intro:spde}
\dd X_t = \frac{1}{2}\Delta X_t \dd t + \sqrt{\gamma X_t} \dd \mathcal{W}_t,
\end{equation}
for a space-time white noise $\mathcal{W}$, and a parameter $\gamma > 0$ called the \emph{branching variance}; note that setting $\gamma = 0$ recovers the hydrodynamic limit (the heat equation). If $d \ge 2$, then $X_t$ is singular w.r.t.\ Lebesgue measure, and \cref{eq:intro:spde} is ill-posed and has to be replaced with a martingale problem.

\subsection*{The Pain in the Torus}

We will now explain heuristically why a SBM started from infinite mass, or from finite mass but conditioned on survival, is unstable in dimensions $d \le 2$, and how this is related to \cref{eq:classicalpde}. In short, for large times $t$, an increasingly small number of individuals that were alive at time zero will be ancestral to the entire population at time $t$, and if $d \le 2$ then the diffusion is not ``fast enough'' to disperse and spread these large families, and they form well-separated clumps. It is this that underpins the problem famously dubbed ``the pain in the torus'' by Felsenstein \cite{felsenstein}; see also Kallenberg \cite{kallenbergtree} (esp.\ Cor.\ 6.5) for similar observations in the context of cluster fields.


Let us now consider an SBM started from Lebesgue measure, and make this idea a bit more precise. We cut $\R^d$ into a grid of unit sized cubes, and regard the initial mass in each of them as one family. Due to the independent branching, we can let each of the families evolve independently from each other, and obtain the process started from Lebesgue measure as their superposition (this is called the branching property, see e.g.\ \cite[p.\ 2]{alisonsuperprocesses}). Each family is a critical branching process started from finite mass, so the probability that it is still alive at time $n$ is proportional to $1 / n$, and, if alive, its expected size is proportional to $n$ \cite[Thm.\ II.1.1]{perkinssuperprocesses}. Hence, in expectation, after $n$ units of time all but every $n$'th family has gone extinct, and each of the living families consists of order $n$ individuals. Due to their diffusive movement, each family will have spread over an area of radius $\sim\sqrt{n} $, hence the population density of any surviving family is $\sim n^{1 - d / 2}$. In the critical case $d =2$, this crude heuristic misses a factor $\log n$ (c.f.\ \cref{eq:appmassaccumulation} below), so the density of the surviving families diverges as $n \to \infty$ if and only if $d \le 2$, in which case they form separated clumps.

This means that the dichotomy between stable long-term dynamics and clumping can really be understood as the dichotomy between unbounded and bounded asymptotic population density of a single surviving family, that is, a finite mass SBM conditioned on survival. It is a classical result due to Evans \cite{evansimmortal} that the distribution of this process is that of a single ``immortal particle'' that follows the path of a Brownian motion and throws off mass at a constant rate, which then evolves like an ordinary SBM, independent of the immortal particle. This is not unexpected: In the unconditioned process, at large times the entire population will have descended from increasingly few ancestors that were alive at time zero, until eventually none remain and the process goes extinct; the conditioning imposes that one of those ancestors---the immortal particle---will never perish. If $(Z_t)$ is a Brownian motion that denotes the path of the immortal particle, then the SBM $(X_t)$ conditioned on survival, which we may now as well start from zero, formally satisfies the SPDE \[
    \dd X_t = \left( \frac{1}{2}\Delta X_t + \gamma \delta_{Z_t}\right)\diff t + \sqrt{\gamma X_t} \dd \mathcal{W}_t,\quad X_0 = 0,
    \] where $\delta_z$ is the Dirac delta at $z\in \R^d$. Then the mean measure conditional on $(Z_t)$ has a density $\rho_t$ that solves $\partial_t \rho = \frac{1}{2}\Delta \rho +\gamma \delta_{Z_t}$, so $\rho_t = \gamma \int_0^t G_{t-s}(\cdot -Z_s)\diff s$. If we centre the process around the immortal particle by putting \[
    \widetilde{\rho}_t(\cdot ) \coloneqq \rho_t(\cdot +Z_t) =\gamma \int_0^t G_{t-s}(\cdot +Z_t-Z_s)\diff s,
    \] which has expectation $\gamma \int_0^t G_{2s} (\cdot ) \diff s$, then we can find 
    \begin{equation}\label{eq:appmassaccumulation}
        \mathbb{E} \left[ X_t(B(Z_t,1)) \right] = \smashoperator{\int\limits_{B(0,1)}} \mathbb{E} \left[ \widetilde{\rho}_t(x) \right] \diff x =\gamma \int_0^t \smashoperator[r]{\int\limits_{B(0,1)}} G_{2(t-s)}(x) \diff x\diff s \propto
    \begin{cases}
        \sqrt{t} , & d = 1,\\
        \log t , & d = 2,\\
        1,  & d \ge 3.
    \end{cases}
    \end{equation}
    Therefore, at least in expectation, mass accumulates in the vicinity of the immortal particle for large times if $d \le 2$, and remains bounded if $d \ge 3$. This recovers the picture painted in the beginning of the section: In dimensions $d \le 2$, a SBM started from infinite mass concentrates in increasingly few large clumps (centred around the ancestors of the surviving families), with space in between growing increasingly empty.

\subsection*{Introducing Repulsion}
As a model for a spatially evolving population (for which $d = 2$ is the most natural dimension), this is very unrealistic, and a better model should reflect stable long-term population dynamics. 
One of the most obvious reasons this clumping phenomenon does not occur in nature is that real individuals do not behave independently from surrounding individuals, as is assumed in the model underlying SBM. Indeed, a high population density leads to resource scarcity and decreases the average number of offspring, and causes migration away from the overcrowded area. The former effect has already been successfully integrated into the SBM model and been shown to lead to stable long-term dynamics \cite{localregulation}. The latter however, has not yet been studied in this context. A natural way to implement this is to introduce a pairwise repulsion between individuals, which corresponds to the term involving $W$ in \cref{eq:classicalpde}.

If we again consider a single surviving family, that is a finite mass superprocess conditioned on survival, then again this will be described by an immortal particle that constantly immigrates mass into the system, which will now be repulsed from the mass it throws off. Formally, we arrive at
\begin{align*}
    \dd X_t &= \left( \frac{1}{2}\Delta X_t + \nabla \cdot (X_t \nabla W \star X_t) +\gamma \delta_{Z_t} \right)\diff t+ \sqrt{\gamma X_t} \dd \mathcal{W}_t,\\
    \diff Z_t &= \diff B_t - \nabla W \star X_t(Z_t) \diff t,
\end{align*}
for independent space-time white noise $\mathcal{W}$ and Brownian motion $B$. This turns out to be a very complicated process, and a natural first step is to study it without the noise; if the equation were linear, this would be the same as taking expectations conditional on $(Z_t)$. If we also replace the Dirac immigration with a bounded function centred on $Z_t$---which should not change the behaviour of the system with regards to clumping behaviour, but makes the equation more regular---then we arrive exactly at \cref{eq:classicalpde} with a time-dependent immigration (recall \cref{rem:timedependentf}), and the question we want to answer is under what assumptions on the repulsion does its solution exhibit bounded long-term behaviour in dimensions one and two.

\section{Fractional Sobolev Spaces}\label{app:sobolev}
We give a minimal definition of fractional Sobolev spaces, and refer the reader to \cite{hitchhiker,sobolev2,sobolev3} for detailed introductions. For $p\in [1,\infty)$, $s\in (0,1)$, and $u\colon \R^d\to \R$ measurable let 
\begin{equation}\label{eq:Wspbrac}
    [u]_{\mathcal{W}^{s,p}} \coloneqq \left( s(1-s) \int_{\R^d}\int_{\R^d} \frac{\left| u(x)-u(y) \right| ^p}{\left| x-y \right| ^{d+sp}}\diff x\diff y \right) ^{1 / p},
\end{equation} and
\begin{equation}\label{eq:Wsinfty}
    [u]_{\mathcal{W}^{s,\infty}} = \sup_{x\neq y} \frac{\left| u(x)-u(y) \right| }{|x-y|^s}.
    \end{equation} For $k\in \N$, $s\in (k,k+1)$, and $p\in [1,\infty]$, let \[
    [u]_{\mathcal{W}^{s,p}} \coloneqq \sum_{|\alpha| = k} [\partial^\alpha u]_{\mathcal{W}^{s-k,p}},
\] with the usual notation for multi-indices $\alpha$. Then
\begin{equation}\label{eq:Wsp}
    \left\|u\right\|_{\mathcal{W}^{s,p}} = \left( \left\|u\right\|_{\mathcal{W}^{\left\lfloor s \right\rfloor ,p}}^p + [u]_{\mathcal{W}^{s,p}}^p  \right) ^{1 / p}
\end{equation} for $p \in [1,\infty)$, and 
\begin{equation}\label{eq:Wsy}
    \left\|u\right\|_{\mathcal{W}^{s,\infty}} = \left\|u\right\|_{\mathcal{W}^{\left\lfloor s \right\rfloor ,\infty}} + [u]_{\mathcal{W}^{s,\infty}},
\end{equation} define the fractional Sobolev norms.

For two normed spaces write $A \hookrightarrow B$ if $A\subset B$ with continuous inclusion.
\begin{lemma}\label{lem:appalphamonotone}
    If $0\le \gamma \le \beta$ and $p\in [1,\infty]$, then $\mathcal{W}^{\beta,p} \hookrightarrow \mathcal{W}^{\gamma,p}$.
\end{lemma}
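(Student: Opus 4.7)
The plan is to reduce the general inequality to comparisons of Gagliardo contributions with matching integer parts, after stripping off the common integer-order piece. Write $k=\lfloor\gamma\rfloor$ and $m=\lfloor\beta\rfloor$, so $0\le k\le m$, and split the argument into the regimes $k=m$ and $k<m$.

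In the regime $k=m$, the integer-order norm $\|u\|_{W^{k,p}}$ appears identically in both $\mathcal{W}^{\beta,p}$ and $\mathcal{W}^{\gamma,p}$ through \cref{eq:Wsp,eq:Wsy}, so it suffices to dominate, for each multi-index $|\alpha|=k$, the seminorm $[\partial^{\alpha}u]_{\mathcal{W}^{\gamma-k,p}}$ by a constant multiple of $\|\partial^{\alpha}u\|_{L^{p}}+[\partial^{\alpha}u]_{\mathcal{W}^{\beta-k,p}}$. Setting $s'=\gamma-k$ and $s=\beta-k$, I would split the Gagliardo integral at $|x-y|=1$. On the short-range piece the inequality $|x-y|^{-(d+s'p)}\le|x-y|^{-(d+sp)}$, valid because $s'\le s$ and $|x-y|\le 1$, pointwise dominates the integrand for $\mathcal{W}^{s',p}$ by that of $\mathcal{W}^{s,p}$. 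On the long-range piece I would apply $|u(x)-u(y)|^{p}\le 2^{p-1}(|u(x)|^{p}+|u(y)|^{p})$ and Fubini, pulling out the finite factor $\int_{|h|>1}|h|^{-(d+s'p)}\diff h$ (finite since $s'>0$; the case $s'=0$ is trivial because $\mathcal{W}^{0,p}=L^{p}$). For $p=\infty$ the analogue is immediate: on $|x-y|\le 1$ one has $|x-y|^{-s'}\le|x-y|^{-s}$ by \cref{eq:Wsinfty}, and on $|x-y|>1$ the quotient is bounded by $2\|u\|_{\infty}$.

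For the regime $k<m$, I would interpose the integer Sobolev space $W^{k+1,p}$ and chain $\mathcal{W}^{\beta,p}\hookrightarrow W^{k+1,p}\hookrightarrow \mathcal{W}^{\gamma,p}$. The first inclusion is immediate from the definitions, as $\mathcal{W}^{\beta,p}$ controls all derivatives of order at most $m\ge k+1$ in $L^{p}$. The second inclusion reduces by the previous paragraph to proving $W^{1,p}\hookrightarrow \mathcal{W}^{s,p}$ for arbitrary $s\in(0,1)$. Splitting once more at $|x-y|=1$, the long-range part is handled as above, while on the short range I would write $u(x)-u(y)=\int_{0}^{1}\nabla u(y+t(x-y))\cdot(x-y)\diff t$ and apply Jensen's inequality to obtain
\[
    |u(x)-u(y)|^{p}\le |x-y|^{p}\int_{0}^{1}|\nabla u(y+t(x-y))|^{p}\diff t.
\]
Then the change of variables $z=y+t(x-y),\ h=x-y$ (with unit Jacobian for each fixed $t$) decouples the integrals and yields a constant multiple of $\|\nabla u\|_{L^{p}}^{p}\int_{|h|\le 1}|h|^{-(d+(s-1)p)}\diff h$, which is finite precisely because $s<1$. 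For $p=\infty$ the corresponding embedding $W^{1,\infty}\hookrightarrow\mathcal{W}^{s,\infty}$ follows directly from the identification $\mathcal{W}^{1,\infty}=C^{0,1}(\R^{d})$ noted in \cref{lem:rhotilde} combined with the elementary fact that a bounded Lipschitz function is $s$-Hölder.

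The only genuinely delicate step is the integral representation used in the short-range estimate of the last embedding; the remaining work amounts to splitting Gagliardo integrals into short- and long-range parts and invoking Fubini. The bookkeeping in the case $s=1$ (i.e.\ $\beta$ integer) is subsumed by the $k<m$ argument, and the case $\gamma=0$ reduces by definition to $\mathcal{W}^{\beta,p}\hookrightarrow L^{p}$, which is likewise contained in the chain through $W^{k+1,p}$.
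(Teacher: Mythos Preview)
Your argument is correct, but it proceeds along an entirely different route from the paper's. The paper disposes of the lemma in one line by invoking real interpolation: since $\mathcal{W}^{\gamma,p}$ can be realised as an interpolation space between $L^p$ and $\mathcal{W}^{\beta,p}$ (with a reference to \cite[Appendix~A]{joseheatkernel}), the embedding $\mathcal{W}^{\beta,p}\hookrightarrow\mathcal{W}^{\gamma,p}$ is immediate from the interpolation inequality $\|\cdot\|_{\mathcal{W}^{\gamma,p}}\le C(\|\cdot\|_{L^p}+\|\cdot\|_{\mathcal{W}^{\beta,p}})$. No splitting of the Gagliardo integral, no case distinction on $\lfloor\gamma\rfloor$ versus $\lfloor\beta\rfloor$, and no mean-value argument enter.

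Your approach is more elementary and entirely self-contained: you work directly with the definitions \cref{eq:Wspbrac,eq:Wsinfty,eq:Wsp,eq:Wsy}, cut the double integral at scale $1$, and bridge integer gaps via the explicit embedding $W^{1,p}\hookrightarrow\mathcal{W}^{s,p}$ obtained from the integral mean-value representation. This is the textbook hands-on proof (essentially what one finds in \cite{hitchhiker}), and it has the advantage of not relying on any external black box. The paper's approach is terser and extends without modification to other interpolation scales, but at the cost of deferring the actual work to a cited reference. Either is acceptable here; just be aware that the paper chose the abstract route.
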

\begin{proof}
    Assume $0 < \gamma < \beta$, otherwise there is nothing to show. Then the claim follows because $\mathcal{W}^{\gamma,p}$ can be written as interpolation space between $L^p$ and $\mathcal{W}^{\beta,p}$, so $\left\|\cdot \right\|_{\mathcal{W}^{\gamma,p}} \le C ( \left\|\cdot \right\|_{L^p} + \left\|\cdot \right\|_{\mathcal{W}^{\beta,p}} ) \le C \left\|\cdot \right\|_{\mathcal{W}^{\beta,p}}$. See \cite[Appendix A]{joseheatkernel} for details on interpolation spaces in the context of Sobolev norms.
\end{proof}

\begin{lemma}
    \hangindent\leftmargini 
    \label{lem:embedding}
    \textup{(i)} If $\gamma > 0$, $\gamma \not\in \N$, then $\mathcal{X}^{\gamma}\hookrightarrow \mathcal{W}^{\gamma,\infty} = C^{\left\lfloor \gamma \right\rfloor ,\gamma-\left\lfloor \gamma \right\rfloor }(\R^d)$.
    \begin{enumerate}
        \setcounter{enumi}{1}
        \item If $\gamma \ge 1$, then $\mathcal{X}^{\gamma} \hookrightarrow \mathcal{W}^{\gamma,\infty} \hookrightarrow C^{\left\lfloor \gamma \right\rfloor -1,1}(\R^d)$.
    \end{enumerate}
\end{lemma}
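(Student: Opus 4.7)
My plan is as follows. The first inclusion $\mathcal{X}^\gamma \hookrightarrow \mathcal{W}^{\gamma,\infty}$ in both parts is immediate from the definition $\nnorm{u}_\gamma = \left\|u\right\|_{\mathcal{W}^{\gamma,1}} + \left\|u\right\|_{\mathcal{W}^{\gamma,\infty}}$ of the $\mathcal{X}^\gamma$ norm, so all the substance lies in the identification with Hölder spaces.

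For (i), I will fix $k = \lfloor\gamma\rfloor$ and $\beta = \gamma - k \in (0,1)$, and unwind \cref{eq:Wsinfty,eq:Wsy} to rewrite
\[
    \left\|u\right\|_{\mathcal{W}^{\gamma,\infty}} = \sum_{|\alpha|\le k}\left\|\partial^\alpha u\right\|_{L^\infty} + \sum_{|\alpha|=k} \sup_{x\neq y}\frac{\left|\partial^\alpha u(x) - \partial^\alpha u(y)\right|}{|x-y|^\beta},
\]
with derivatives understood weakly. For each multi-index $\alpha$ with $|\alpha|=k$, the $\beta$-Hölder seminorm and the $L^\infty$-norm of $\partial^\alpha u$ are both finite, so $\partial^\alpha u$ has a $\beta$-Hölder continuous representative. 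Selecting such representatives consistently across all orders $\le k$ (so that lower-order representatives are genuine classical primitives, which is possible by the Lipschitz characterisation $\mathcal{W}^{1,\infty} = C^{0,1}(\R^d)$ already recalled in the proof of \cref{lem:rhotilde}) exhibits $u$ as an element of $C^{k,\beta}(\R^d)$, and a term-by-term comparison with \cref{eq:Ckbeta} gives $\left\|u\right\|_{C^{k,\beta}} \le \left\|u\right\|_{\mathcal{W}^{\gamma,\infty}}$. The reverse containment goes the same way: starting from a function in $C^{k,\beta}(\R^d)$ whose derivatives of order $\le k$ are additionally essentially bounded (automatic for elements of $\mathcal{W}^{\gamma,\infty}$), every term of $\left\|\cdot\right\|_{\mathcal{W}^{\gamma,\infty}}$ is controlled by the $C^{k,\beta}$ norm together with those $L^\infty$ bounds.

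For (ii), only the embedding $\mathcal{W}^{\gamma,\infty} \hookrightarrow C^{\lfloor\gamma\rfloor -1,1}(\R^d)$ requires proof, and it reduces in a single step to the identification $\mathcal{W}^{1,\infty} = C^{0,1}(\R^d)$. The idea is to apply this identification to $v = \partial^\alpha u$ for each $|\alpha| = \lfloor\gamma\rfloor - 1$: its weak gradient $(\partial^{\alpha+e_i}u)_{i=1}^d$ consists of order-$\lfloor\gamma\rfloor$ derivatives of $u$, all essentially bounded because $\gamma \ge \lfloor\gamma\rfloor$ forces $\left\|\partial^\delta u\right\|_{L^\infty} \le \left\|u\right\|_{\mathcal{W}^{\gamma,\infty}}$ for every $|\delta| \le \lfloor\gamma\rfloor$. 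Hence $\partial^\alpha u$ admits a Lipschitz representative with
\[
    \sup_{x\neq y}\frac{\left|\partial^\alpha u(x) - \partial^\alpha u(y)\right|}{|x-y|} \le \sum_{i=1}^d \left\|\partial^{\alpha+e_i}u\right\|_{L^\infty} \le \left\|u\right\|_{\mathcal{W}^{\gamma,\infty}},
\]
which combined with the trivial bounds $\left\|\partial^\alpha u\right\|_{L^\infty}\le\left\|u\right\|_{\mathcal{W}^{\gamma,\infty}}$ for $|\alpha|<\lfloor\gamma\rfloor-1$ yields the $C^{\lfloor\gamma\rfloor-1,1}$-bound. Note this treats integer and non-integer $\gamma\ge 1$ uniformly.

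The main obstacle is not any deep result but a bookkeeping one: weak derivatives must be carefully identified with classical derivatives of continuous representatives chosen consistently across orders, so that the differentiation relations continue to hold pointwise and the norms can be compared term by term. Once that is handled, nothing beyond the Lipschitz characterisation $\mathcal{W}^{1,\infty} = C^{0,1}(\R^d)$ is needed.
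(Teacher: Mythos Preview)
Your proposal is correct and follows essentially the same approach as the paper: both reduce everything to the identification $\mathcal{W}^{1,\infty} = C^{0,1}(\R^d)$ and the upgrade of weak derivatives to classical ones once their continuity is known. The paper spells out this upgrade as an explicit induction on $\lfloor\gamma\rfloor$ (with a footnote giving the fundamental-theorem-of-calculus argument that a continuous weak derivative is a classical derivative), whereas you condense the same content into your ``bookkeeping'' paragraph; the only cosmetic difference in (ii) is that the paper first reduces non-integer $\gamma$ to $\lfloor\gamma\rfloor$ via $\mathcal{W}^{\gamma,\infty}\hookrightarrow\mathcal{W}^{\lfloor\gamma\rfloor,\infty}$ before inducting, while you treat both cases at once.
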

\begin{proof}
    First note that $\mathcal{X}^{\gamma} \hookrightarrow \mathcal{W}^{\gamma,\infty}$ by definition of $\mathcal{X}^\gamma$, see \cref{eq:defWgamma}.
    \begin{enumerate}
        \item For any $k\in \N_0$ and $\gamma \in (k,k+1)$, if $f\colon \R^d \to \R$ is $k$-times differentiable then $\left\|f\right\|_{\mathcal{W}^{\gamma,\infty}} = \left\|f\right\|_{C^{k,\gamma-k}(\R^d)}$ by definition, see \cref{eq:Wsinfty,eq:Wsy,eq:Ckbeta}. Hence we only have to prove that any $f\in \mathcal{W}^{\gamma,\infty}$ for $\gamma > 0$ is $\left\lfloor \gamma \right\rfloor $-times differentiable.

            There is nothing to prove for $\gamma \in (0,1)$, so suppose the claim is true for $\gamma \in (0,k)\setminus \N$ for some $k\in \N_0$ and let $f\in \mathcal{W}^{\gamma,\infty}$ for some $\gamma \in (k,k+1)$. Put $\beta \coloneqq \gamma - \left\lfloor \gamma \right\rfloor $. Then $f$ is continuous because $\mathcal{W}^{\gamma,\infty}\subset \mathcal{W}^{1,\infty} = C^{0,1}(\R^d)$ (the last equality is a well-known theorem). Furthermore $f \in \mathcal{W}^{\gamma,\infty} \subset \mathcal{W}^{1+\beta,\infty}$, so $f$ has partial weak derivatives of first order that are $\beta$-Hölder continuous, so they are in fact proper derivatives
            \footnote{Let $d = 1$ and $\varphi_\varepsilon$ an approximation to unity in $\R$, $x > 0$, and put $\psi_\varepsilon \coloneqq \varphi_\varepsilon \star \ind_{[0,x]}$, then \[
                    \int_0^x f'(y)\diff y \leftarrow \int_{\R} \psi_\varepsilon(y) f'(y) \diff y = - \int_{\R} \psi_\varepsilon'(y) f(y) \diff y = f(x) - f(0),
            \] so $f(x) = f(0) + \int_0^x f'(y) \diff y$. Analogously for $x < 0$. Since $f'$ is continuous, $f$ is in fact differentiable in the classical sense with derivative $f'$. This also works in $d \ge 2$.}.
            By the induction hypothesis, all first partial derivatives are themselves $(\left\lfloor \gamma \right\rfloor -1)$ times differentiable.
        \item Since $\mathcal{W}^{\gamma,\infty} \hookrightarrow \mathcal{W}^{\left\lfloor \gamma \right\rfloor ,\infty}$, it suffices to show that $\mathcal{W}^{k,\infty} \hookrightarrow C^{k-1,1}(\R^d)$ for all $k\in \N$. This is a well-known theorem for $k = 1$. Suppose now it is proved for some $k\in \N$, and let $f\in \mathcal{W}^{k+1,\infty}$. Then by (i), $f$ has proper first partial derivatives, and they are in $\mathcal{W}^{k,\infty} \hookrightarrow  C^{k-1,1}(\R^d)$, so in fact $f$ is $k$-times differentiable and, by the induction hypothesis,
            \begin{align*}
        \left\|f\right\|_{\mathcal{W}^{k+1,\infty}} 
        &= \left\|f\right\|_\infty + \sum_{i=1}^d \left\|\partial_{i} f\right\|_{\mathcal{W}^{k,\infty}} \\
        &\le \left\|f\right\|_\infty + C\sum_{i=1}^d \left\|\partial_i f\right\|_{C^{k-1,1}} \\
        &\le C \left\|f\right\|_{C^{k,1}}.
    \end{align*}
    \end{enumerate}
\end{proof}

\begin{lemma}\label{lem:appalphabeta}
    Let $0\le \gamma < \beta$. Then there is $C > 0$ such that the following hold.
    \begin{enumerate}
        \item[(i)] If $f,g\in \mathcal{W}^{\gamma,\infty}$, then $fg \in \mathcal{W}^{\gamma,\infty}$ and $\left\|fg\right\|_{\mathcal{W}^{\gamma,\infty}} \le C \left\|f\right\|_{\mathcal{W}^{\gamma,\infty}} \left\|g\right\|_{\mathcal{W}^{\gamma,\infty}}$,
        \item[(ii)] If $f\in \mathcal{W}^{\gamma,1}, g\in \mathcal{W}^{\beta,\infty}$, then $fg\in \mathcal{W}^{\gamma,1}$ and $\left\|fg\right\|_{\mathcal{W}^{\gamma,1}}\le C \left\|f\right\|_{\mathcal{W}^{\gamma,1}} \left\|g\right\|_{\mathcal{W}^{\beta,\infty}}$.
    \end{enumerate}
\end{lemma}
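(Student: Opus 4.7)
My plan is to reduce to the case $\gamma \in (0,1)$ by peeling off integer orders via the Leibniz rule, and then to handle the fractional seminorm directly. In the trivial case $\gamma = 0$ both claims are just Hölder's inequality. For integer $\gamma = k \ge 1$, the standard Sobolev norm equals $\sum_{|\alpha| \le k} \|\partial^\alpha (\cdot)\|_{L^p}$, and Leibniz together with Hölder reduces the task to bounding products of the form $\|(\partial^{\alpha'} f)(\partial^{\alpha - \alpha'} g)\|_{L^p}$, which in turn gives both (i) and (ii) in integer order. For general non-integer $\gamma = k + \gamma'$ with $\gamma' \in (0,1)$ I would expand $\partial^\alpha(fg)$ by Leibniz for $|\alpha| = k$ (legitimate because Lemma~\ref{lem:embedding} shows $\mathcal{W}^{\gamma,\infty} \hookrightarrow C^{\lfloor\gamma\rfloor,\gamma - \lfloor\gamma\rfloor}$, and $f \in \mathcal{W}^{\gamma,1}$ has classical $\lfloor\gamma\rfloor$-th weak derivatives), and apply the fractional case of order $\gamma' \in (0,1)$ to each factor. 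The passage from $\|\partial^{\alpha'} f\|_{\mathcal{W}^{\gamma',1}}$ back to $\|f\|_{\mathcal{W}^{\gamma,1}}$ is controlled by the monotonicity Lemma~\ref{lem:appalphamonotone} together with $\|\partial^{\alpha'} f\|_{\mathcal{W}^{\gamma',1}} \le \|f\|_{\mathcal{W}^{|\alpha'| + \gamma',1}}$, and since $\beta > \gamma$ there is room to pick an intermediate Hölder exponent $\beta' \in (\gamma', \beta - k)$ for $g$, so the fractional case can be invoked with the strict inequality $\beta' > \gamma'$ required in (ii).

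This reduces everything to the heart of the matter: $\gamma \in (0,1)$. The shared trick in both parts is the product identity
\begin{equation*}
    f(x)g(x) - f(y)g(y) = f(x)\bigl(g(x) - g(y)\bigr) + g(y)\bigl(f(x) - f(y)\bigr),
\end{equation*}
substituted into the seminorm definitions \eqref{eq:Wspbrac} and \eqref{eq:Wsinfty}. For (i), taking the supremum of each piece separately gives
\begin{equation*}
    [fg]_{\mathcal{W}^{\gamma,\infty}} \le \|f\|_\infty [g]_{\mathcal{W}^{\gamma,\infty}} + \|g\|_\infty [f]_{\mathcal{W}^{\gamma,\infty}},
\end{equation*}
which combined with $\|fg\|_\infty \le \|f\|_\infty \|g\|_\infty$ yields the desired bound.

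The main obstacle is part (ii) for $\gamma \in (0,1)$: the double integral has the singular kernel $|x-y|^{-d-\gamma}$, and we must extract the correct norms without relying on any Hölder regularity of $f$. For the term $g(y)(f(x)-f(y))$ the trivial bound $|g(y)| \le \|g\|_\infty$ pulls $[f]_{\mathcal{W}^{\gamma,1}}$ out directly. For the problematic term $f(x)(g(x)-g(y))$, I would split the domain along the diagonal: in the near region $\{|x-y| \le 1\}$ use $|g(x) - g(y)| \le [g]_{\mathcal{W}^{\beta,\infty}} |x-y|^\beta$, turning the kernel into $|x-y|^{\beta - \gamma - d}$, whose $y$-integral over a unit ball is finite precisely because $\beta > \gamma$; Fubini then pulls out $\|f\|_{L^1}$. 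In the far region $\{|x-y| > 1\}$ use $|g(x) - g(y)| \le 2\|g\|_\infty$, so the kernel $|x-y|^{-d-\gamma}$ is integrable in $y$ and Fubini again pulls out $\|f\|_{L^1}$. WLOG we may take $\beta < 1$ by Lemma~\ref{lem:appalphamonotone}, so that $\|g\|_{\mathcal{W}^{\beta,\infty}} = \|g\|_\infty + [g]_{\mathcal{W}^{\beta,\infty}}$ controls both bounds. Assembling, $[fg]_{\mathcal{W}^{\gamma,1}} \le C\|f\|_{\mathcal{W}^{\gamma,1}}\|g\|_{\mathcal{W}^{\beta,\infty}}$, and combined with the easy $L^1$-bound this closes the proof.
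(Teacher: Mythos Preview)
Your proposal is correct and follows essentially the same route as the paper: reduce to $\gamma\in(0,1)$ via the Leibniz rule, then use the product identity $f(x)g(x)-f(y)g(y)=f(x)(g(x)-g(y))+g(y)(f(x)-f(y))$, with the $p=1$ case handled by splitting the double integral along $|x-y|\le 1$ (using the $\beta$-H\"older seminorm of $g$) versus $|x-y|>1$ (using $\|g\|_\infty$). Your reduction is in fact slightly more careful than the paper's, since you explicitly note that one may take $\beta<1$ without loss of generality and track the intermediate exponent $\beta'$ needed after Leibniz.
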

\begin{proof}
    If $\gamma\in \N_0$ and $p\in \left\{1,\infty \right\} $, then for any multi-index $\left| \alpha \right| \le \gamma$,
    \begin{align*}
        \left\|\partial^\alpha (fg)\right\|_{L^p}
        &= \left\|\sum_{\alpha_1 + \alpha_2 = \alpha} (\partial^{\alpha_1} f)(\partial^{\alpha_2}g)\right\|_{L^p} \le \sum_{\alpha_1 + \alpha_2 = \alpha} \left\|\partial^{\alpha_1} f\right\|_{L^p} \left\|\partial^{\alpha_2}g\right\|_{L^\infty}\\
        &\le C \left\|f\right\|_{\mathcal{W}^{\gamma,p}} \left\|g\right\|_{\mathcal{W}^{\gamma,\infty}}.
    \end{align*}
    and both claims follow. If $\gamma \not\in \N_0$, we need to show the claim with the LHS replaced by $\left[ fg \right] _{\mathcal{W}^{\gamma,p}}$. By a similar application of the product rule it suffices to consider $\gamma \in (0,1)$. If $p = \infty$, then
    \begin{align*}
        \left[ fg \right] _{\mathcal{W}^{\gamma,\infty}}
        &= \sup_{x\neq y} \frac{\left| f(x)g(x) - f(y)g(y) \right| }{\left| x-y \right|^\gamma }\\
        &\le \left\|f\right\|_{L^\infty} \left[ g \right] _{\mathcal{W}^{\gamma,\infty}} + \left\|g\right\|_{L^\infty} \left[ f \right] _{\mathcal{W}^{\gamma,\infty}}\\
        &\le 2\left\|f\right\|_{\mathcal{W}^{\gamma,\infty}} \left\|g\right\|_{\mathcal{W}^{\gamma,\infty}}.
    \end{align*}
    If $p = 1$, then
    \begin{align*}
        \left[ fg \right] _{\mathcal{W}^{\gamma,1}}
        &= \gamma(1-\gamma) \int\int \frac{\left| f(x)g(x)-f(y)g(y) \right| }{\left| x-y \right| ^{d + \gamma}}\dd x\dd y \\
        &\le C \left\|g\right\|_{L^\infty} \left[ f \right] _{\mathcal{W}^{\gamma,1}} + \int \int \left| f(y) \right| \frac{\left| g(x) - g(y) \right| }{\left| x-y \right| ^{d+\gamma}}\dd x\dd y.
    \end{align*}
    We split the integral according to $\left| x-y \right| \le 1$ or $> 1$. The former contribution can be bounded by
    \begin{align*}
        \left[ g \right] _{\mathcal{W}^{\beta,\infty}} \iint\limits_{\left| x-y \right| \le 1} \frac{\left| f(y) \right| }{\left| x-y \right|^{d + \gamma - \beta}}\dd x\dd y\le C \left[ g \right] _{\mathcal{W}^{\gamma,\infty}} \left\|f\right\|_{L^1},
    \end{align*}
    where we used that $\int_{B(0,1)}\left| x \right| ^{-d+\beta-\gamma} \dd x \le C \int_0^1 r^{-1+\beta - \gamma} \dd r < \infty$. The contribution with $|x-y| > 1$ can be bounded by
    \begin{align*}
        2\left\|g\right\|_{L^\infty}\iint _{\left| x-y \right| > 1} \frac{\left| f(y) \right| }{\left| x-y \right| ^{d+\gamma}}\dd x\dd y \le C \left\|g\right\|_{L^\infty} \int \left| f(y) \right| \dd y = \left\|g\right\|_{L^\infty} \left\|f\right\|_{L^1},
    \end{align*}
    where we used that $\int_{\R^d\setminus B(0,1)} \left| x \right| ^{-\gamma-d} \dd x \le C  \int_1^\infty r^{-1-\gamma}\dd r < \infty$.
\end{proof}

\bibliographystyle{plain}
\bibliography{references}

\begin{thebibliography}{10}

\bibitem{bbmextremal2}
Elie A{\"\i}d{\'e}kon, Julien Berestycki, {\'E}ric Brunet, and Zhan Shi.
\newblock Branching {B}rownian motion seen from its tip.
\newblock {\em Probab. Theory and Related Fields}, 157:405--451, 2013.

\bibitem{bbmextremal1}
Louis-Pierre Arguin, Anton Bovier, and Nicola Kistler.
\newblock The extremal process of branching {B}rownian motion.
\newblock {\em Probab. Theory Related Fields}, 157(3-4):535--574, 2013.

\bibitem{wellposed1}
Jacob Bedrossian and Nancy Rodriguez.
\newblock Inhomogeneous {P}atlak--{K}eller--{S}egel models and aggregation
  equations with nonlinear diffusion in {R}d.
\newblock {\em Discrete Contin. Dyn. Syst., Ser. B}, 19(5):1279, 2014.

\bibitem{wellposed2}
Jacob Bedrossian, Nancy Rodriguez, and Andrea~L. Bertozzi.
\newblock Local and global well-posedness for aggregation equations and
  {P}atlak--{K}eller--{S}egel models with degenerate diffusion.
\newblock {\em Nonlinearity}, 24(6):1683, 2011.

\bibitem{blanchet}
Adrien Blanchet, Jean Dolbeault, and Beno{\^\i}t Perthame.
\newblock Two-dimensional {K}eller-{S}egel model: {O}ptimal critical mass and
  qualitative properties of the solutions.
\newblock {\em Electron. J. Differential Equations}, 2006, 2006.

\bibitem{bovierbbm}
Anton Bovier and Lisa Hartung.
\newblock Branching {B}rownian motion with self-repulsion.
\newblock In {\em Ann. Henri Poincar{\'e}}, pages 1--26. Springer, 2022.

\bibitem{sobolev3}
Lorenzo Brasco, David G{\'o}mez-Castro, and Juan~L. V{\'a}zquez.
\newblock Characterisation of homogeneous fractional {S}obolev spaces.
\newblock {\em Calc. Var. Partial Differ. Equ.}, 60(2):1--40, 2021.

\bibitem{stationary1}
Martin Burger, Marco Di~Francesco, and Marzena Franek.
\newblock Stationary states of quadratic diffusion equations with long-range
  attraction.
\newblock {\em Commun. Math. Sci.}, 11(3):709--738, 2013.

\bibitem{wellposed3}
Vincent Calvez and Jos{\'e}~A. Carrillo.
\newblock Volume effects in the {K}eller--{S}egel model: energy estimates
  preventing blow-up.
\newblock {\em J. Math. Pures Appl. (9)}, 86(2):155--175, 2006.

\bibitem{airshower}
JF~Carlson and JR~Oppenheimer.
\newblock On multiplicative showers.
\newblock {\em Phys. Rev.}, 51(4):220, 1937.

\bibitem{pdereview}
Jos{\'e}~A Carrillo, Katy Craig, and Yao Yao.
\newblock Aggregation-diffusion equations: Dynamics, asymptotics, and singular
  limits.
\newblock {\em Active Particles, Volume 2}, page~65, 2019.

\bibitem{joseheatkernel}
Jos{\'e}~A Carrillo, David G{\'o}mez-Castro, Yao Yao, and Chongchun Zeng.
\newblock Asymptotic simplification of {A}ggregation-{D}iffusion equations
  towards the heat kernel.
\newblock {\em Arch. Ration. Mech. Anal.}, 247(1):11, 2023.

\bibitem{stationary2}
Jos{\'e}~A. Carrillo, Sabine Hittmeir, Bruno Volzone, and Yao Yao.
\newblock Nonlinear aggregation-diffusion equations: radial symmetry and long
  time asymptotics.
\newblock {\em Invent. Math.}, 218(3):889--977, 2019.

\bibitem{stationary3}
Jos{\'e}~A. Carrillo, Franca Hoffmann, Edoardo Mainini, and Bruno Volzone.
\newblock Ground states in the diffusion-dominated regime.
\newblock {\em Calc. Var. Partial Differ. Equ.}, 57(5):1--28, 2018.

\bibitem{blowup1}
Li~Chen, Jian-Guo Liu, and Jinhuan Wang.
\newblock Multidimensional degenerate {K}eller--{S}egel system with critical
  diffusion exponent 2n/(n+2).
\newblock {\em SIAM J. Math. Anal.}, 44(2):1077--1102, 2012.

\bibitem{blowup2}
Li~Chen and Jinhuan Wang.
\newblock Exact criterion for global existence and blow up to a degenerate
  {K}eller--{S}egel system.
\newblock {\em Doc. Math.}, 19:103--120, 2014.

\bibitem{dawsonsuperprocess}
Donald~A. Dawson, Bernard Maisonneuve, and Joel Spencer.
\newblock {\em Measure-valued {M}arkov processes}.
\newblock Springer, 1993.

\bibitem{sobolev2}
F{\'e}lix del Teso, David G{\'o}mez-Castro, and Juan~L. V{\'a}zquez.
\newblock Estimates on translations and taylor expansions in fractional
  {S}obolev spaces.
\newblock {\em Nonlinear Anal.}, 200:111995, 2020.

\bibitem{hitchhiker}
Eleonora {Di Nezza}, Giampiero Palatucci, and Enrico Valdinoci.
\newblock Hitchhiker's guide to the fractional {S}obolev spaces.
\newblock {\em Bull. Sci. Math.}, 136(5):521--573, 2012.

\bibitem{dynkin1991branching}
Eugene~B. Dynkin.
\newblock Branching particle systems and superprocesses.
\newblock {\em Ann. Probab.}, 19(3):1157--1194, 1991.

\bibitem{morse}
Maria~R D’Orsogna, Yao-Li Chuang, Andrea~L Bertozzi, and Lincoln~S Chayes.
\newblock Self-propelled particles with soft-core interactions: patterns,
  stability, and collapse.
\newblock {\em Phys. Rev. Lett.}, 96(10):104302, 2006.

\bibitem{englander1}
J{\'a}nos Engl{\"a}nder.
\newblock The center of mass for spatial branching processes and an application
  for self-interaction.
\newblock {\em Electronic J. Probab.}, 15:1938 -- 1970, 2010.

\bibitem{englander2}
J{\'a}nos Engl{\"a}nder and Liang Zhang.
\newblock {Branching diffusion with particle interactions}.
\newblock {\em Electron. J. Probab.}, 21:1 -- 25, 2016.

\bibitem{alisonsuperprocesses}
Alison~M. Etheridge.
\newblock {\em An introduction to superprocesses}.
\newblock Number~20. American Mathematical Soc., 2000.

\bibitem{localregulation}
Alison~M. Etheridge.
\newblock Survival and extinction in a locally regulated population.
\newblock {\em Ann. Appl. Probab.}, 14(1):188 – 214, 2004.

\bibitem{evansimmortal}
Steven~N. Evans.
\newblock Two representations of a conditioned superprocess.
\newblock {\em Proc. R. Soc. Edinb., Sect. A, Math.}, 123(5):959–971, 1993.

\bibitem{felsenstein}
Joseph Felsenstein.
\newblock A pain in the torus: Some difficulties with models of isolation by
  distance.
\newblock {\em The American Naturalist}, 109(967):359--368, 1975.

\bibitem{gillOU}
Hardeep Gill.
\newblock A {S}uper {O}nstein-{U}hlenbeck process interacting with its center
  of mass.
\newblock {\em Ann. Probab.}, 41(2):989--1029, 2013.

\bibitem{herrero}
Miguel~A. Herrero and Juan~J.L. Vel{\'a}zquez.
\newblock Singularity patterns in a chemotaxis model.
\newblock {\em Math. Ann.}, 306:583--623, 1996.

\bibitem{clt}
Wen-Ming Hong and Zeng-Hu Li.
\newblock A central limit theorem for super-{B}rownian motion with
  super-{B}rownian immigration.
\newblock {\em J. Appl. Probab.}, 36(4):1218--1224, 1999.

\bibitem{horstmann}
Dirk Horstmann.
\newblock From 1970 until present: the {K}eller-{S}egel model in chemotaxis and
  its consequences.
\newblock 2003.

\bibitem{meanfield1}
Pierre-Emmanuel Jabin.
\newblock A review of the mean field limits for vlasov equations.
\newblock {\em Kinet. Relat. Models}, 7(4):661, 2014.

\bibitem{meanfield2}
Pierre-Emmanuel Jabin and Zhenfu Wang.
\newblock Mean field limit and propagation of chaos for vlasov systems with
  bounded forces.
\newblock {\em J. Funct. Anal.}, 271(12):3588--3627, 2016.

\bibitem{stationary4}
Gunnar Kaib.
\newblock Stationary states of an aggregation equation with degenerate
  diffusion and bounded attractive potential.
\newblock {\em SIAM J. Math. Anal.}, 49(1):272--296, 2017.

\bibitem{kallenbergtree}
Olav Kallenberg.
\newblock Stability of critical cluster fields.
\newblock {\em Math. Nachr.}, 77(1):7--43, 1977.

\bibitem{chemotaxis}
Evelyn~F. Keller and Lee~A. Segel.
\newblock Model for chemotaxis.
\newblock {\em J. Theor. Biol.}, 30(2):225--234, 1971.

\bibitem{li1992measure}
Zeng-Hu Li.
\newblock Measure-valued branching processes with immigration.
\newblock {\em Stochastic Process. Appl.}, 43(2):249--264, 1992.

\bibitem{immigrationstructures}
Zeng-Hu Li.
\newblock Immigration structures associated with {D}awson-{W}atanabe
  superprocesses.
\newblock {\em Stochastic Process. Appl.}, 62(1):73--86, 1996.

\bibitem{stationary5}
Elliott~H. Lieb and Horng-Tzer Yau.
\newblock The {C}handrasekhar theory of stellar collapse as the limit of
  quantum mechanics.
\newblock {\em Commun. Math. Phys.}, 112(1):147--174, 1987.

\bibitem{nonlinear1}
Karl Oelschläger.
\newblock Large systems of interacting particles and the porous medium
  equation.
\newblock {\em J. Differ. Equations}, 88(2):294--346, 1990.

\bibitem{patlak}
Clifford~S. Patlak.
\newblock Random walk with persistence and external bias.
\newblock {\em Bull. Math. Biol.}, 15:311--338, 1953.

\bibitem{perkinssuperprocesses}
Edwin~A. Perkins.
\newblock Dawson-{W}atanabe superprocesses and measure-valued diffusions.
\newblock In {\em Lectures on probability theory and statistics
  ({S}aint-{F}lour, 1999)}, volume 1781 of {\em Lecture Notes in Math.}, pages
  125--324. Springer, Berlin, 2002.

\bibitem{wellposed4}
Yoshie Sugiyama.
\newblock Time global existence and asymptotic behavior of solutions to
  degenerate quasi-linear parabolic systems of chemotaxis.
\newblock {\em Differ. Integral Equ.}, 20(2):133--180, 2007.

\bibitem{nonlinear2}
Chad~M. Topaz, Andrea~L. Bertozzi, and Mark~A. Lewis.
\newblock A nonlocal continuum model for biological aggregation.
\newblock {\em Bull. Math. Biol.}, 68(7):1601--1623, 2006.

\bibitem{nuclear}
Hector~Rene Vega-Carrillo, Isvi~Ruben Esparza-Garcia, and Alvaro Sanchez.
\newblock Features of a subcritical nuclear reactor.
\newblock {\em Annals of Nuclear Energy}, 75:101--106, 2015.

\bibitem{ldp1}
Hong Wenming.
\newblock Limiting behavior of the super-{B}rownian motion with
  super-{B}rownian immigration.
\newblock {\em CR Math. Rep. Acad. Sci. Canada}, 25(1):1--6, 2003.

\bibitem{ldp2}
Mei Zhang.
\newblock Large deviations for super-{B}rownian motion with immigration.
\newblock {\em J. Appl. Probab.}, 41(1):187--201, 2004.

\bibitem{ldpsmall}
Mei Zhang.
\newblock Some scaled limit theorems for an immigration super-{B}rownian
  motion.
\newblock {\em Science in China Series A: Mathematics}, 51(2):203--214, 2008.

\bibitem{wellposed5}
Yuming~Paul Zhang.
\newblock On a class of diffusion-aggregation equations.
\newblock {\em Discrete Contin. Dyn. Syst.}, 40(2):907, 2020.

\end{thebibliography}

\end{document}